\newcommand{\bC}{{\mathbb C}}
\newcommand{\bN}{{\mathbb N}}
\newcommand{\bP}{{\mathbb P}}
\newcommand{\bR}{{\mathbb R}}
\newcommand{\bZ}{{\mathbb Z}}
\newcommand{\cP}{{\mathscr P}}
\newcommand{\cW}{{\mathscr W}}
\newcommand{\cX}{{\mathscr X}}
\newcommand{\cY}{{\mathscr Y}}
\newcommand{\dC}{{\mathcal C}}
\newcommand{\dD}{{\mathcal D}}
\newcommand{\dG}{{\mathcal G}}
\newcommand{\dK}{{\mathcal K}}
\newcommand{\dM}{{\mathcal M}}
\renewcommand{\phi}{\varphi}
\DeclareMathOperator{\iso}{\cong}
\DeclareMathOperator{\pr}{pr}
\DeclareMathOperator{\inj}{\hookrightarrow}
\DeclareMathOperator{\NS}{NS}
\DeclareMathOperator{\del}{\partial}
\DeclareMathOperator{\Mon}{Mon}
\DeclareMathOperator{\MonHdg}{Mon^2_\text{Hdg}}
\DeclareMathOperator{\Def}{Def}
\renewcommand{\div}{{\rm div}}
\DeclareMathOperator{\Amp}{Amp}
\DeclareMathOperator{\Mov}{Mov}
\newcommand{\TODO}[1]{}
\newtheorem{thm}{Theorem}[section]
\newtheorem{defi}[thm]{Definition}
\newtheorem{prop}[thm]{Proposition}
\newtheorem{lemme}[thm]{Lemma}
\newtheorem{cor}[thm]{Corollary}
\theoremstyle{remark}
\newtheorem{remark}[thm]{Remark}
\newtheorem{nota}[thm]{Notation}
\newtheorem{rmk}[thm]{Remark}
\newtheorem{ex}[thm]{Example}
\DeclareMathOperator{\Z}{\mathbb{Z}}
\DeclareMathOperator{\R}{\mathbb{R}}
\DeclareMathOperator{\K}{\mathbb{K}}
\DeclareMathOperator{\codim}{Codim}
\DeclareMathOperator{\Pic}{Pic}
\DeclareMathOperator{\rk}{rk}
\DeclareMathOperator{\Gr}{Gr}
\DeclareMathOperator{\Image}{Im}
\DeclareMathOperator{\Ima}{Im}
\DeclareMathOperator{\Rea}{Re}
\DeclareMathOperator{\Vect}{Vect}
\DeclareMathOperator{\id}{id}
\DeclareMathOperator{\Sing}{Sing}
\DeclareMathOperator{\GL}{GL}
\DeclareMathOperator{\Q}{\mathbb{Q}}
\DeclareMathOperator{\C}{\mathbb{C}}
\DeclareMathOperator{\BK}{\mathcal{B}\mathcal{K}}
\DeclareMathOperator{\coloneqq}{:=}
\DeclareMathOperator{\Ree}{Re}
\DeclareMathOperator{\Dom}{Dom}
\DeclareMathOperator{\Ker}{Ker}
\newcommand{\eq}[1][r]
{\ar@<-3pt>@{-}[#1]
\ar@<-1pt>@{}[#1]|<{}="gauche"
\ar@<+0pt>@{}[#1]|-{}="milieu"
\ar@<+1pt>@{}[#1]|>{}="droite"
\ar@/^2pt/@{-}"gauche";"milieu"
\ar@/_2pt/@{-}"milieu";"droite"}
\newcommand{\incl}[1][r]
  {\ar@<-0.2pc>@{^(-}[#1] \ar@<+0.2pc>@{-}[#1]}
\newcommand{\doublewidetilde}[1]{{%
  \mathpalette\double@widetilde{#1}%
}}
\newcommand{\double@widetilde}[2]{%
  \sbox\z@{$\m@th#1\widetilde{#2}$}%
  \ht\z@=.9\ht\z@
  \widetilde{\box\z@}%
}
\date{\today\thanks{Apart Section \ref{Kawa} and some details in Section \ref{sec:wall-divs}, this paper has been written before September 2020. Section \ref{Kawa} was a request of the previous referee and has been written in September 2023}}  
\begin{document}
\title{\bf On the Kähler cone of irreducible symplectic orbifolds}

\author{Gr\'egoire \textsc{Menet}; Ulrike \textsc{Riess}}

\maketitle


\begin{abstract} 
We generalize the Hodge version of the global Torelli theorem in the framework of irreducible symplectic orbifolds.
We also propose a generalization of several results related to the Kähler cone and the notion of wall
divisors introduced in the smooth case by Mongardi in \cite{Mongardi13}. As an application we propose a
definition of the mirror symmetry as an involution on a moduli space; this construction is also new in the smooth case.
\end{abstract}

\section{Introduction}
A compact Kähler analytic space is called \emph{holomorphically symplectic} if it admits a holomorphic 2-form
which is non-degenerate on its smooth locus. In the last years there have been many developments regarding
these objects with the objective of generalizing parts of the existing theory for compact hyperkähler
manifolds. These generalizations respond to two necessities. The first one is to get around the lack of examples
in the smooth case and the second is to find a framework more adapted to the minimal model program, where
certain singularities naturally arise. The two most important developments in this area during the last years
concern the Beauville--Bogomolov decomposition theorem \cite{Greb}, \cite{Horing-Peternell} and the global Torelli theorem \cite{Bakker-Lehn-GlobalTorelli}, \cite{Menet-2020}.
This paper continues the quest in the framework of irreducible symplectic orbifolds.

A complex analytic space is called an \emph{orbifold} if it is locally isomorphic to a quotient of an open
subset of $\C^n$ by a finite automorphism group (Definition \ref{VV}). An orbifold $X$ is called
\emph{irreducible (holomorphically) symplectic} if $X\smallsetminus \Sing X$ is simply connected, admits a
unique (up to a scalar multiple), nondegenerate holomorphic 2-form, and $\codim \Sing X\geq 4$ (Definition
\ref{def}).  Irreducible symplectic orbifolds have several key properties, making them a particularly
interesting class of geometrical objects. First, they appear as elementary bricks in a Bogomolov
decomposition theorem (\cite[Theorem 6.4]{Campana-2004}). Second, they are well adapted to the minimal model
program since they are $\Q$-factorial and have terminal singularities (\cite[Corollary 1]{Namikawa:note}). Third, the
construction of twistor spaces (\cite[Theorem 5.4]{Menet-2020}) generalizes to these objects which is a
powerful tool when generalizing results from the smooth case. Finally, many examples have already been constructed in this framework (\cite[Section 13]{Fujiki-1983}).

After the generalization of the global Torelli theorem in the moduli space setting (\cite[Theorem 1.1]{Menet-2020}), it seems natural to generalize its Hodge version (see \cite[Theorem 1.3]{Markman11} in the smooth case).
\begin{thm}\label{main}
Let $X$ and $X'$ be irreducible symplectic orbifolds.
\begin{itemize}
\item[(i)]
$X$ and $X'$ are bimeromorphic if and only if there exists a parallel transport operator $f:H^2(X,\Z)\rightarrow H^2(X',\Z)$ which is an isometry of integral Hodge structures.
\item[(ii)]
Let $f:H^2(X,\Z)\rightarrow H^2(X',\Z)$ be a parallel transport operator, which is an isometry of integral Hodge structures. There exists an isomorphism $\widetilde{f}:X\rightarrow X'$ such that $f=\widetilde{f}_*$ if and only if $f$ maps some K\"ahler class on $X$ to a K\"ahler class on $X'$.
\end{itemize}
\end{thm}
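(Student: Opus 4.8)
The plan is to reduce both statements to the global Torelli theorem in the moduli space setting (\cite[Theorem 1.1]{Menet-2020}), which identifies inseparable points of the marked moduli space with bimeromorphic models, together with the deformation theory furnished by the twistor spaces (\cite[Theorem 5.4]{Menet-2020}). For part (i) I would first treat the direction $\Leftarrow$. Given a parallel transport operator $f\colon H^2(X,\Z)\to H^2(X',\Z)$ which is a Hodge isometry, fix a marking $\eta\colon H^2(X,\Z)\xrightarrow{\sim}\Lambda$ and set $\eta'\coloneqq \eta\circ f^{-1}$, a marking of $X'$. Because $f$ is a parallel transport operator, the marked pairs $(X,\eta)$ and $(X',\eta')$ lie in the same connected component of the marked moduli space; because $f$ is a Hodge isometry they have the same period point. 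The moduli-space Torelli theorem then forces $X$ and $X'$ to be bimeromorphic.

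For the direction $\Rightarrow$, a bimeromorphic map $\phi\colon X\dashrightarrow X'$ is an isomorphism in codimension one (the singular loci have codimension $\geq 4$ and the models are $\Q$-factorial and terminal, so no divisor is contracted or extracted), hence it induces a Hodge isometry $\phi_*\colon H^2(X,\Z)\to H^2(X',\Z)$ preserving the Beauville--Bogomolov form. The remaining point is to check that $\phi_*$ is a parallel transport operator; I would establish this by spreading $\phi$ out in a family over a connected base and deforming it to a genuine isomorphism at a very general member, where the Picard group is too small to support any flopping contraction, so that $\phi_*$ becomes a composition of parallel transports with an isomorphism-induced map.

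Part (ii), direction $\Rightarrow$, is immediate: an isomorphism $\widetilde{f}\colon X\to X'$ is a biholomorphism, so $\widetilde f_*$ carries the K\"ahler cone $\cK_X$ onto $\cK_{X'}$ and is tautologically a parallel transport operator. For the substantive direction $\Leftarrow$, suppose $f$ maps a K\"ahler class $\alpha\in\cK_X$ to a K\"ahler class $f(\alpha)\in\cK_{X'}$. By part (i) there is a bimeromorphic map $\phi\colon X\dashrightarrow X'$, and $g\coloneqq \phi_*^{-1}\circ f$ is then a Hodge isometry of $H^2(X,\Z)$ lying in the Hodge monodromy group $\MonHdg$. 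The mechanism is that birational models correspond to the chamber decomposition of the positive cone cut out by the wall divisors: the hypothesis $f(\alpha)\in\cK_{X'}$ means that $\phi_*^{-1}$ sends a K\"ahler class of $X'$ back into $\cK_X$, so the two chambers are identified, $g(\cK_X)=\cK_X$, and no flop intervenes. Hence $\phi$ contracts no subvariety and is an isomorphism, while the monodromy operator $g$ preserving $\cK_X$ is realized by $\Aut(X)$; composing gives the desired $\widetilde f$ with $\widetilde f_*=f$.

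The main obstacle is concentrated in two places. First, in part (i) $\Rightarrow$, deforming the bimeromorphic map $\phi$ to an isomorphism requires the unobstructedness of the deformations of $X$ and the ability to propagate the bimeromorphic correspondence across the family despite the singularities; this is where the orbifold hypotheses (codimension of the singular locus, terminality, existence of twistor families) must be used in an essential way. Second, in part (ii) $\Leftarrow$, the identification of birational models with K\"ahler chambers and the statement that a Hodge monodromy operator fixing the K\"ahler cone comes from an automorphism rest on the orbifold analogue of the wall-divisor theory and on the injectivity of the period map on the K\"ahler-marked moduli space, which is the technical heart that the earlier sections of the paper must supply.
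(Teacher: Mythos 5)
Your part~(i) and the trivial direction of part~(ii) are essentially correct and coincide with the paper's own route: the forward implication of (i) is exactly the content of Proposition~\ref{mainprop2} (the bimeromorphism deforms, over a one-dimensional disk, to an isomorphism away from the special fibre, which exhibits $\phi_*$ as a parallel transport Hodge isometry), and the reverse implication is Theorem~\ref{mainGTTO} combined with the fact that non-separated marked orbifolds carry a cycle inducing a bimeromorphism (\cite[Remark 3.24 and Corollary 5.11]{Menet-2020}). The paper proves exactly these two ingredients (Lemmas~\ref{orbiornotorbi}--\ref{isometry}, Propositions~\ref{mainprop} and~\ref{mainprop2}) and then cites them.

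The substantive direction of part~(ii), however, has a genuine gap, and as written the argument is circular. First, from $f(\alpha)\in\mathcal{K}_{X'}$ you conclude that $\phi_*^{-1}(f(\alpha))=g(\alpha)$ lies in $\mathcal{K}_X$ and hence that $g(\mathcal{K}_X)=\mathcal{K}_X$. This does not follow: $\phi_*^{-1}(\mathcal{K}_{X'})=\phi^*(\mathcal{K}_{X'})$ is a priori only a K\"ahler-type chamber of $\mathcal{C}_X$, different from $\mathcal{K}_X$ unless $\phi$ already extends to an isomorphism, which is what is to be proven; the most chamber theory could give is $g(\mathcal{K}_X)=\phi^*(\mathcal{K}_{X'})$, i.e.\ $f(\mathcal{K}_X)=\mathcal{K}_{X'}$, and no more. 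Second, your closing appeal --- that a Hodge monodromy operator preserving $\mathcal{K}_X$ is realized by $\Aut(X)$ --- is precisely the case $X=X'$ of the statement under proof, and the results you defer to in your final paragraph are established in this paper \emph{after} Theorem~\ref{main} and partly \emph{by means of} it: the injectivity of the period map on the K\"ahler-marked moduli space (Step 3 of Theorem~\ref{TorelliMirror}) is deduced from Theorem~\ref{main}(ii), and the chamber description via wall divisors (Theorem~\ref{thm:defo-for-WD} and its corollaries) is built on Propositions~\ref{mainprop} and~\ref{mainprop2}. What is actually needed, and what the paper does, is Huybrechts' correspondence-cycle argument: by Theorem~\ref{mainGTTO} and Proposition~\ref{mainprop2} the marked pairs are non-separated, so one obtains a cycle $\Gamma=Z+\sum_i Y_i\subset X\times X'$ with $Z$ the graph of a bimeromorphism, the $Y_i$ dominating neither factor, and $[\Gamma]_*=f$; the hypothesis that both $\alpha$ and $f(\alpha)$ are K\"ahler forces the contributions of the $Y_i$ to vanish, whence $f=[Z]_*$; finally Proposition~\ref{mainprop} (a bimeromorphism under which a class positive on all rational curves corresponds on both sides extends to an isomorphism) produces $\widetilde f$ with $\widetilde f_*=f$. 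Chamber combinatorics in $H^2$ alone cannot manufacture the isomorphism; this cycle argument is the geometric input your proposal is missing.
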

As in the smooth case, the second cohomology group of an irreducible symplectic orbifold is endowed with the Beauville--Bogomolov form (see \cite[Theorem 3.17]{Menet-2020}).  
See Definition \ref{transp} for the definition of parallel transport operator.

The proof of Theorem \ref{main} uses a generalization of Huybrechts' theorem on bimeromorphic hyperkähler manifolds \cite[Theorem 2.5]{Huybrechts2003} to the orbifold setting (see Proposition \ref{mainprop2}).

Theorem \ref{main} shows the importance of understanding the Kähler classes on irreducible symplectic orbifolds.

A central focus of this paper is the development of the theory of wall
divisors for irreducible symplectic orbifolds,
thus generalizing the theory independently developed by 
Mongardi \cite{Mongardi13} and Amerik--Verbitsky \cite{AmerikVerbitsky14}. By definition, the wall divisors provide orthogonal hyperplanes which (even after applying
Hodge preserving monodromy operators) intersect trivially the
birational Kähler chambers (see Definition \ref{walldefi}). We prove that wall divisors are invariant
under arbitrary parallel transport and that extremal rays of the cone of effective curves give rise to wall divisors.
More precisely, we have the following result:
\begin{thm}\label{thm:defo-for-WDintro}
  \begin{itemize}
    \item[(i)] (compare Theorem \ref{thm:defo-for-WD}) Let $X$ and $Y$ be two irreducible symplectic orbifolds such that there exists a parallel transport operator $f: H^2(X,\Z)\rightarrow H^2(Y,\Z)$.
      Let $D$ be a wall divisor of $X$ such that $f(D)\in \Pic(Y)$. Then $f(D)$ is a wall divisor on $Y$.
    \item[(ii)] (compare Proposition \ref{extremalray}) Let $R$ be an extremal ray of the cone of effective curves on $X$ with negative self intersection
      with respect to the Beauville--Bogomolov form. 
			Then the hyperplane $R^\perp \subseteq H^2(X,\bZ)$ is the
      orthogonal complement of a wall divisor.
  \end{itemize}
\end{thm}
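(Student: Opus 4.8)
For (i), note first that, as parallel transport operators are lattice isometries, $f(D)$ is primitive with $q(f(D))=q(D)<0$; only the chamber condition of Definition~\ref{walldefi} remains to be checked. The core of the proof is a deformation-invariance statement: if $X\rightsquigarrow X_0$ is a deformation along which $D$ stays of type $(1,1)$, then $D$ is a wall divisor on $X$ if and only if it is one on $X_0$. Granting this, I would argue as follows. Using surjectivity of the period map and the twistor families of \cite{Menet-2020}, deform $(X,D)$ to a very general pair $(X_0,D)$ with $\Pic(X_0)=\bZ D$. Since $f$ is a parallel transport operator it identifies the relevant period domains, so by surjectivity of the period map one can find a deformation $(Y_0,f(D))$ of $(Y,f(D))$ whose period equals the image under $f$ of the period of $X_0$; then $f\colon H^2(X_0,\bZ)\to H^2(Y_0,\bZ)$ is simultaneously a parallel transport operator and an isometry of integral Hodge structures. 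By Theorem~\ref{main}(i) the orbifolds $X_0$ and $Y_0$ are bimeromorphic. Since $f$ then conjugates $\MonHdg(X_0)$ to $\MonHdg(Y_0)$ and sends $\mathcal{BK}_{X_0}$ into the $\MonHdg(Y_0)$-orbit of $\mathcal{BK}_{Y_0}$, and since the condition of Definition~\ref{walldefi} is $\MonHdg$-invariant, the class $D$ is a wall divisor on $X_0$ exactly when $f(D)$ is one on $Y_0$. The deformation-invariance statement then transports this equivalence back to $X$ and $Y$.

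The main obstacle in (i) is precisely the deformation-invariance claim. One must show that, as long as $D$ remains algebraic, no facet of the birational K\"ahler cone orthogonal to a monodromy translate of $D$ can be created or destroyed. I would establish this by combining the Hodge-theoretic global Torelli theorem with the continuity of the twistor deformations and the orbifold version of Huybrechts' theorem (Proposition~\ref{mainprop2}), which guarantees that $\mathcal{BK}_X$ is governed by the bimeromorphic models and hence varies rigidly along the locus where $D$ stays algebraic.

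For (ii), let $\rho\in H^2(X,\bR)$ be the class dual to $R$ under the isomorphism induced by $q$, and let $D$ be the primitive integral generator of $\bR\rho\cap H^2(X,\bZ)$; then $D\in\Pic(X)$, $D^\perp=R^\perp$ and $q(D)<0$, so $D$ is the natural candidate. The first step is to represent $R$ by an actual rational curve $C$: when $X$ is projective this follows from the cone theorem, while when $b_2(X)\ge 5$ it follows after a deformation together with a density argument for the monodromy orbit of the period; this is exactly where the hypothesis enters. Once $C$ is available, the K\"ahler criterion characterizing K\"ahler classes by positivity against rational curves shows that every K\"ahler class pairs strictly positively with $C$, so $\mathcal{K}_X$ lies strictly on one side of $D^\perp$ and $D^\perp$ supports a facet of the K\"ahler cone.

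It remains to upgrade this to the full condition of Definition~\ref{walldefi}, namely $g(D)^\perp\cap \mathcal{BK}_X=\emptyset$ for every $g\in\MonHdg(X)$. I would argue by contradiction: if $D$ were not a wall divisor, then the deformation invariance proved in (i), together with the density of the monodromy orbit (again using projectivity or $b_2(X)\ge 5$), would force $D^\perp$ to lie in the interior of the birational K\"ahler cone of $X$ itself, contradicting the fact that $R^\perp=D^\perp$ is a supporting hyperplane of $\NEbar(X)$ determined by the extremal ray $R$. The main difficulty here is this transfer step: propagating the interior information back to $X$ rigorously requires both the deformation theory of (i) and the ergodicity/density input that the hypothesis ``$X$ projective or $b_2(X)\ge 5$'' is designed to supply.
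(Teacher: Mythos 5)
In (i) your endgame (reduce to very general $X_0$, $Y_0$ with matching periods, invoke Theorem \ref{main}(i) to get a bimeromorphism, and transfer the wall condition through $\MonHdg$-conjugation) is sound and matches the closing step of the paper's proof of Theorem \ref{thm:defo-for-WD}. But you make everything rest on the ``deformation-invariance statement'', and that statement is not an auxiliary lemma: it \emph{is} Theorem \ref{thm:defo-for-WD} itself (a parallel transport operator sending $D$ into $\Pic(Y)$ is exactly a deformation keeping $D$ of type $(1,1)$). Your justification --- global Torelli plus ``$\BK_X$ varies rigidly along the locus where $D$ stays algebraic'' --- is circular, and false as a general principle: K\"ahler and birational K\"ahler cones do not vary rigidly in families; their walls appear and disappear as the Picard lattice jumps, and the whole difficulty is to show that the particular wall $D^{\perp}$ persists. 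The paper's proof supplies the missing mechanism: set $N=\varphi(D)^{\perp}$, a lattice of signature $(3,\rk N-3)$ precisely because $q(D)<0$; connect the two period points by chains of generic twistor lines inside $\dD_N$; lift each line to an honest twistor family by showing that a very general fiber $Z$ (with $\Pic(Z)=\Z D_Z$ and $D_Z$ orthogonal to the twistor K\"ahler class) carries no effective curves at all, so that $\dC_Z=\dK_Z$ by the orbifold Boucksom criterion; then conclude with global Torelli and Proposition \ref{mainprop2}. None of this chain appears in your argument, and nothing you cite replaces it.

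In (ii) the hypothesis ``projective or $b_2(X)\geq 5$'' enters at a different place than you put it, and your final contradiction does not parse against Definition \ref{walldefi}. The paper never represents $R$ by a rational curve; what it proves with the hypothesis is that $R$ cannot be a limit of other extremal rays, so that $D^{\perp}\cap\partial\dK_X$ contains a subset open in $D^{\perp}$: in the projective case by the cone theorem applied to a klt pair $(X,\epsilon L)$, and for $b_2(X)\geq 5$ by local finiteness of the wall structure (Lemma \ref{lem:locfin}), which in turn rests on boundedness of squares of primitive wall divisors (Proposition \ref{prop:boundedqW}, via Bakker--Lehn and Amerik--Verbitsky). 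Your inference from ``every K\"ahler class pairs positively with $C$'' to ``$D^{\perp}$ supports a facet'' silently assumes exactly this non-accumulation, so it is a gap, not a proof. Finally, the negation of being a wall divisor only says that \emph{some} monodromy translate $g(D^{\perp})$ meets $\BK_X$; it does not force $D^{\perp}$ itself ``to lie in the interior of the birational K\"ahler cone'', so your contradiction is not established. The paper instead deforms to a very general nearby $(Y,\psi)$ with $\Pic(Y)=\Z\,\psi^{-1}\varphi(D)$, applies part (i) to conclude $\dC_Y=\dK_Y$, and transports a twistor family over the line through a generic boundary class $\alpha\in D^{\perp}\cap\partial\dK_X$ back to the period point of $X$, producing a K\"ahler class proportional to $\alpha\in\partial\dK_X$ --- that is the actual absurdity.
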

The proof of this theorem adopts a different approach from \cite[Proposition 7.4]{Lehn2}, as it is based on the existence of the twistor space for irreducible symplectic orbifolds. This strategy allows us to dispense with both the projectivity assumption on $X$ and the condition $b_2(X)\geq 5$. 

These results have many applications. For example, they give information on the singularities of the irreducible symplectic orbifolds (see Section \ref{applising}). In particular, we obtain a positive answer to the Fu--Namikawa conjecture (\cite[Conjecture 2]{Fu-Namikawa}) 
in the following context: two crepant resolutions of a given symplectic space lead to deformation equivalent irreducible symplectic orbifolds (see Corollary \ref{crepant}).

On the other hand, the global Torelli theorem (Theorem \ref{main}) and our results on wall divisors allow us to generalize the notion of lattice mirror symmetry to the orbifold setting in Section \ref{mir}. 
In \cite[Section 6]{Huybrechts2}, Huybrechts proposes a definition of mirror symmetry at the level of the following period domain: $$\dG_\Lambda:=\left\{\left.(\alpha,\beta+ix)\in
(\Lambda\otimes\C)^2\ \right|\ \alpha^2=0,\ \alpha\cdot\overline{\alpha}>0,\ \alpha\cdot x=0,\  x ^2>0
\right\}.$$ In Theorem \ref{TorelliMirror}, using our results on the Kähler cone and the global Torelli theorem, we are able to provide an isomorphism between a sub-period domain of $\dG_\Lambda$ and the moduli space $\widetilde{\mathcal{M}_{\Lambda}}:=\left\{(X,\varphi,\omega_X,\sigma_X,\beta)\right\}$, 
which parametrizes 
deformation equivalent marked irreducible symplectic orbifolds of Beauville--Bogomolov lattice $\Lambda$ endowed with a Kähler class $\omega_X$, a holomorphic 2-form $\sigma_X$ and a class $\beta\in H^2(X,\R)$. Theorem \ref{TorelliMirror} makes it possible to define a mirror symmetry as an involution on a sub-moduli space of $\widetilde{\mathcal{M}_{\Lambda}}$. Therefore, this allows us to define the symmetric mirror of a precise irreducible symplectic orbifold. This is a new approach compared to what was done in \cite{Dolgachev} and \cite{Camere-2016}, where the mirror symmetry acts on a set of moduli spaces.
This is an improvement of results in \cite[Section 4]{Franco-Jardim-Menet} and is also new in the setting of irreducible symplectic manifolds.

We recall below the construction of an irreducible symplectic orbifold which occupies an important role in the literature (see \cite[Section 13, table 1, line I.2]{Fujiki-1983}, \cite[Corollary 5.7]{Marku-Tikho},
\cite{Menet-2015}, \cite[Section 3.2]{Menet-2020}). We will denote it by $M'$. Let $X$ be a $K3^{[2]}$-type hyperkähler manifold, carrying a symplectic involution $\iota$. We define $M'$ to be a crepant resolution in codimension 2 of the quotient $X/\iota$ (see Example \ref{exem}). In particular, we prove that this orbifold is deformation equivalent to a Fujiki orbifold (see Proposition \ref{FujikiM'}) and to the Markushevich--Tikhomirov orbifold (see Proposition \ref{rappel}).

In a sequel of this paper, we will provide a full description of the wall divisors on $M'$ \cite{Ulrike2}. This is an
application of the results from Section \ref{Kähler}, and will make it possible to determine the Kähler cone of
irreducible symplectic orbifolds of this deformation type explicitly.

The paper is organized as follows. In Section \ref{reminders}, we provide some reminders for important results
on irreducible symplectic orbifolds. Section \ref{proof} is devoted to the proof of Theorem \ref{main} with
applications to the singularities in Section \ref{applising}. In Section \ref{nonseparated}, we provide an
example of non-separated points in a moduli space of marked irreducible symplectic orbifolds. 
In Section 4, we generalize Boucksom's theorem (\cite[Théorème 1.2]{Boucksom-2001})
to describe the Kähler cone of an irreducible symplectic orbifold. Moreover, we
extend existing results on wall divisors from the smooth setting to the orbifold
one.
 Finally, in Section \ref{mir}, we apply our results to define a mirror symmetry.
~\\

\textbf{Acknowledgements}: We would like to thank 
Lie Fu and Giovanni Mongardi for helpful discussions. We are very grateful to the Second Japanese-European Symposium on Symplectic Varieties and Moduli Spaces where our collaboration was initiated. The first author has been financed 
by the ERC-ALKAGE grant No. 670846 and by the PRCI SMAGP (ANR-20-CE40-0026-01).
The second author was a member of the Institute for Theoretical Studies at ETH Zürich
(supported by Dr.~Max R\"ossler, the Walter Haefner Foundation and the ETH Z\"urich
Foundation).
\section{Reminder on holomorphically symplectic orbifolds}\label{reminders}
\subsection{Definition}\label{basicdef}
First, we briefly recall the definition of a symplectic orbifold used in this paper. 
\begin{defi}\label{VV}
A \emph{$n$-dimensional orbifold} is a connected paracompact complex space $X$ such that for every point $x\in X$, there exists an open neighborhood $U$ and a triple $(V,G,\pi)$ such that $V$ is an open subset of $\C^n$, $G$ is a finite automorphisms group of $V$ and $\pi:V\rightarrow U$ is the composition of the quotient map $V\rightarrow V/G$ and an isomorphism $V/G\simeq U$. 
The quadruple $(U,V,G,\pi)$ is called a \emph{local uniformizing system} of $x$.
\end{defi}
\begin{rmk}\label{normal}
Note that an orbifold is normal as a complex space (see for instance \cite[Th\'eor\`eme 4]{Cartan:Quotient}).
\end{rmk}

Let $X$ be an orbifold. 
A \emph{smooth differential $k$-form} $\omega$ on $X$ is a $C^{\infty}$ differential $k$-form on $X_{reg}:=X\smallsetminus \Sing X$ such that for all local uniformizing systems $(U,V,G,\pi)$, $\pi^*(\omega_{|U_{reg}})$ extends to a $C^{\infty}$-differential $k$-form on $V$ (see \cite[Section 2]{Menet-2020} and \cite[Section 1 and 2]{Blache} for more details). 

A smooth differential form $\omega$ on $X$ is called \emph{Kähler} if it is a Kähler form on $X_{reg}$ and such
that for all local uniformizing systems $(U,V,G,\pi)$, the pullback $\pi^*(\omega_{|U_{reg}})$ extends to a Kähler form on $V$. A \emph{Kähler orbifold} is an orbifold which admits a Kähler form.
\begin{rmk}
An orbifold is Kähler if and only if it is Kähler as a complex space (cf. \cite[Definition 1.2, 4.1 and Remark 4.2]{Fujiki-1978} for the definition of Kähler complex space and \cite[pages 793-795]{CF-Zaffran} for the proof of the equivalence).
\end{rmk}

We denote by $\mathcal{K}_X$ the \emph{Kähler cone} of $X$, which is the set of all De Rham classes of Kähler forms on $X$.

\begin{defi}[\cite{Menet-2020}, Definition 3.1]\label{def}
A compact K\"ahler orbifold $X$ is called \emph{primitively symplectic} if:
\begin{itemize}
\item[(1)]
$X_{reg}:=X\smallsetminus \Sing X$ is endowed with a non-degenerate holomorphic 2-form, which is unique up to scalar.
\item[(2)]
$\codim \Sing X\geq 4$.
\end{itemize}
If moreover $X_{reg}$ is simply connected, $X$ is said an \emph{irreducible symplectic orbifold}.
\end{defi}
We refer to \cite[Section 6]{Campana-2004}, \cite[Section 3.1]{Menet-2020} and \cite[Section 3.1]{Fu-Menet} for discussions about this definition. 
Condition (2) is equivalent to having terminal singularities (\cite{Namikawa:note}). According to \cite{BCHM}, we can always partially solve the algebraic symplectic singularities in order to obtain terminal singularities; however it is not known if quotient singularities are stable under this process.
\begin{ex}[{\cite[Section 3.2]{Menet-2020}}]\label{exem}
Let $X$ be a hyperkähler manifold deformation equivalent to a Hilbert scheme of 2 points on a K3 surface and $\iota$ a symplectic involution on $X$.
By \cite[Theorem 4.1]{Mongardi-2012}, $\iota$ has 28 fixed points and a fixed K3 surface $\Sigma$. 
We denote by $M'$ the blow-up of $X/\iota$ at the image of $\Sigma$. The orbifold $M'$ is irreducible
symplectic (see \cite[Proposition 3.8]{Menet-2020}).
\end{ex}
\begin{defi}\label{Nikulin}
An orbifold $M'$ obtained from the previous construction is called a \emph{Nikulin} orbifold. An orbifold deformation equivalent to a Nikulin orbifold is called of \emph{Nikulin type}.
\end{defi}
\begin{ex}\label{exem2}
 A similar construction of an irreducible symplectic orbifold (starting from a generalized Kummer
  variety) can also be found in \cite[Proposition
    3.8]{Menet-2020}; we denote this orbifold by $K'$. See \cite[Section 13]{Fujiki-1983} and \cite[Section 5]{Fu-Menet} for further examples of primitively
  symplectic orbifolds.
\end{ex}
\subsection{Beauville--Bogomolov form}
Let $X$ be an irreducible symplectic orbifold.
As explained in \cite[Section 3.4]{Menet-2020}, $H^2(X,\Z)$ can be endowed with an indivisible and integral bilinear
form of signature $(3,b_2(X)-3)$, called the \emph{Beauville--Bogomolov form} and denoted by $(\,,\,)_q$. 
Let $\sigma_X$ be a holomorphic 2-form of $X$ with $\sigma_X^{n}\overline{\sigma}_X^{n}=1$; the quadratic form $q$ associated to $(\,,\,)_q$ is obtained, for all $\alpha\in H^2(X,\Z)$, by:
$$q(\alpha):=\lambda\left[\frac{n}{2}(\sigma_X\overline{\sigma}_X)^{n-1}\alpha^2+(1-n)\left(\sigma_X^{n-1}\overline{\sigma}_X^{n}\alpha\right)\cdot\left(\sigma_X^{n}\overline{\sigma}_X^{n-1}\alpha\right)\right],$$
with $\lambda\in \R$ such that $(\,,\,)_q$ is integral and primitive.
\begin{ex}[\cite{Menet-2015}, Theorem 2.5; \cite{Kapfer-Menet}, Theorem 1.1]\label{exempleBB}
The irreducible symplectic orbifolds $M'$ and $K'$ of Example \ref{exem} and \ref{exem2} have the following Beauville--Bogomolov lattice respectively:
$$H^{2}(M',\Z)\simeq U(2)^3\oplus E_8(-1)\oplus (-2)^2\ \text{and}\ H^2(K',\Z)\simeq U(3)^3\oplus \begin{pmatrix} -5 & -4\\ -4 & -5\end{pmatrix}.$$
\end{ex} 
\begin{rmk}\label{dualclass}
Let $\beta\in H^{2n-1,2n-1}(X,\Q)$.
We can associate to $\beta$ its \emph{dual class} $\beta^{\vee}\in H^{1,1}(X,\Q)$, defined as follows. By \cite[Corollary 2.7]{Menet-2020} and since the Beauville--Bogomolov form is integral and non-degenerate (see \cite[Theorem 3.17]{Menet-2020}), we can find $\beta^{\vee}\in H^{2}(X,\Q)$ such that for all $\alpha\in H^2(X,\C)$:
 $$(\alpha,\beta^{\vee})_q=\alpha\cdot \beta,$$ 
where the dot on the right hand side is the cup product. Since $(\beta^{\vee},\sigma_X)_q=\beta\cdot \sigma_X=0$, we have $\beta^{\vee}\in H^{1,1}(X,\Q)$.
\end{rmk}
\subsection{Period map}\label{per}
Let $\Lambda$ be an abstract lattice of signature $(3,\rk \Lambda-3)$. 
A \emph{marking} of a primitively symplectic orbifold $X$ is an isometry $\varphi: H^{2}(X,\Z)\rightarrow \Lambda$.
We denote by $\mathcal{M}_{\Lambda}$ the set of isomorphism classes of marked irreducible symplectic orbifolds $(X,\varphi)$ with $\varphi:H^2(X,\Z)\rightarrow\Lambda$. As explained in \cite[Section 3.5]{Menet-2020}, this set can be endowed with a complex structure, such that the \emph{period map}:
$$\xymatrix@R0cm@C0.5cm{\ \ \ \ \ \ \ \ \mathscr{P}:& \mathcal{M}_{\Lambda}\ar[r]& \mathcal{D}_{\Lambda}\\
&(X,\varphi)\ar[r]&\varphi(\sigma_X)}$$
is a local isomorphism with $\mathcal{D}_\Lambda:=\left\{\left.\alpha\in
\mathbb{P}(\Lambda_{\C})\ \right|\ \alpha^2=0,\ \alpha\cdot\overline{\alpha}>0\right\}$.
The complex manifold $\mathcal{M}_{\Lambda}$ is called \emph{the moduli space of marked primitively symplectic
  orbifolds of Beauville--Bogomolov lattice $\Lambda$}.
When there is no ambiguity on $\Lambda$, we will write $\dD$ instead of $\dD_\Lambda$.

Let $\mathscr{X}\overset{f}{\rightarrow} S$ be a deformation of $X$ by primitively symplectic orbifolds, where
the base $S$ is simply connected. We denote $o:=f(X)$. By \cite[Theorem 3.17]{Menet-2020}, we can see $S$ as an analytic subset of $\mathcal{M}_{\Lambda}$.
Moreover, by \cite[Corollary 3.12]{Menet-2020}, there exists a natural isomorphism $u_s:
H^*(\mathscr{X}_s,\C)\rightarrow H^*(X,\C)$ which respects the cup product. Let $\varphi$ be a marking for $X$. The period map restricted to $S$ has the following expression: 
$$
\xymatrix@R0pt{
\mathscr{P}:S\ar[r]& \mathcal{D} \\
\ \ \ \ s\ar[r] &\varphi\circ u_{s}(\sigma_{\mathscr{X}_s}),
}$$
For instance, let $f:\mathscr{X}\rightarrow \Def(X)$ be the Kuranishi deformation of $X$ (see for instance \cite[Remark 3.4]{Fujiki-1983}). By \cite[Proposition 3.16]{Menet-2020}, there exists an open neighborhood $U$ of $o\in \Def(X)$, such that, for all $t\in U$, the fiber $\mathscr{X}_t:=f^{-1}(t)$ is a primitively symplectic manifold. 
Moreover, we can shrink $U$ to a smaller open neighborhood of $o$ such that $\mathscr{P}: U\rightarrow \mathcal{P}(U)$ is an isomorphism.
\subsection{Global Torelli theorem}\label{GTT}
The moduli space $\mathcal{M}_{\Lambda}$ introduced in the previous section is a non-separated manifold, however by \cite[Corollary 3.25]{Menet-2020}, there exists a \emph{Hausdorff reduction} $\overline{\mathcal{M}_{\Lambda}}$ of $\mathcal{M}_{\Lambda}$, such that the period map $\mathscr{P}$ factors through $\overline{\mathcal{M}_{\Lambda}}$:
$$\xymatrix{\mathcal{M}_{\Lambda}\ar@/^1pc/[rr]^{\mathscr{P}}\ar@{->>}[r]& \overline{\mathcal{M}_{\Lambda}}\ar[r]& \dD.}$$
Moreover, two points in $\mathcal{M}_{\Lambda}$ map to the same point in $\overline{\mathcal{M}_{\Lambda}}$ if and only if they are non-separated in $\mathcal{M}_{\Lambda}$. Finally, we can recall the \emph{global Torelli Theorem}.

\begin{thm}[\cite{Menet-2020}, Theorem 1.1]\label{mainGTTO}
Let $\Lambda$ be a lattice of signature $(3,b-3)$, with $b\geq3$. Assume that $\mathcal{M}_{\Lambda}\neq\emptyset$ and let $\mathcal{M}_{\Lambda}^{\circ}$ be a connected component of $\mathcal{M}_{\Lambda}$. Then the period map:
$$\mathscr{P}: \overline{\mathcal{M}_{\Lambda}}^{\circ}\rightarrow \mathcal{D}$$
is an isomorphism. 
\end{thm}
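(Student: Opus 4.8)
The plan is to adapt to the orbifold setting Verbitsky's proof of the global Torelli theorem for smooth hyperk\"ahler manifolds, in the streamlined form due to Huybrechts and Markman, exploiting that the two analytic pillars of that argument are already at our disposal here: the local Torelli theorem and the existence of twistor families. Fix the connected component $\mathcal{M}_\Lambda^\circ$ and its Hausdorff reduction $\overline{\mathcal{M}_\Lambda}^\circ$. By the construction recalled in Section \ref{per} the period map $\mathscr{P}$ is a local isomorphism, and since the Hausdorff reduction only collapses mutually non-separated points, which by construction share the same period, the induced map $\mathscr{P}\colon \overline{\mathcal{M}_\Lambda}^\circ \to \mathcal{D}$ remains a local isomorphism of complex manifolds between connected spaces. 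The target $\mathcal{D}$ is connected and simply connected: a point $[\alpha]\in\mathcal{D}$ is the same datum as the oriented positive-definite real $2$-plane spanned by $\Ree\alpha$ and $\Ima\alpha$, so $\mathcal{D}$ is the Grassmannian of oriented positive $2$-planes in $\Lambda_\R$ of signature $(3,b-3)$, which is a standard simply connected homogeneous domain. It therefore suffices to prove that $\mathscr{P}$ is a covering map, since a holomorphic local isomorphism which is a covering over a simply connected base, from a connected total space, is automatically a biholomorphism.

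The geometric engine is the twistor family. Given a marked orbifold $(X,\varphi)$ and a K\"ahler class $\omega\in\mathcal{K}_X$, the generalized twistor space of \cite[Theorem 5.4]{Menet-2020} produces a family of primitively symplectic orbifolds over $\mathbb{P}^1$, hence a holomorphic map $\mathbb{P}^1\to\mathcal{M}_\Lambda^\circ$ whose image under $\mathscr{P}$ is a \emph{twistor line}: the conic $\mathcal{D}\cap\mathbb{P}(W_\C)$ cut out by the positive-definite three-space $W=\langle \varphi(\omega),\Ree\varphi(\sigma_X),\Ima\varphi(\sigma_X)\rangle\subseteq\Lambda_\R$. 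I would first deduce surjectivity of $\mathscr{P}$ on the component. Its image is open by the local isomorphism property, and it contains the whole twistor line through any of its points, because a twistor family issuing from a point of $\mathcal{M}_\Lambda^\circ$ stays within $\mathcal{M}_\Lambda^\circ$ (the image of the connected $\mathbb{P}^1$ lands in the component) and $\mathbb{P}^1$ is compact. Since twistor lines connect any two points of the period domain — a statement about the three-space geometry of $\Lambda_\R$ that depends only on the signature and so transfers verbatim from the smooth case — openness together with twistor stability and connectedness of $\mathcal{D}$ forces the image to be all of $\mathcal{D}$.

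Next I would promote surjectivity to the covering property. The decisive input is that twistor lines lift: given a twistor line $\ell\subset\mathcal{D}$ and a point $\bar p\in\overline{\mathcal{M}_\Lambda}^\circ$ with $\mathscr{P}(\bar p)\in\ell$, the twistor family attached to a representative of $\bar p$ and the relevant K\"ahler class defines a complete rational curve in $\overline{\mathcal{M}_\Lambda}^\circ$ through $\bar p$ that $\mathscr{P}$ maps isomorphically onto $\ell$; completeness of the lift again rests on the compactness of $\mathbb{P}^1$. Feeding these liftable twistor lines into the twistor-connectivity of $\mathcal{D}$ yields the path-lifting property, and a surjective \'etale map with path lifting onto a locally simply connected base is a covering map. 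Combined with the local homeomorphism property this shows that $\mathscr{P}\colon\overline{\mathcal{M}_\Lambda}^\circ\to\mathcal{D}$ is a covering.

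The hard part is precisely this covering step. One must verify that, after the Hausdorff reduction, no hidden sheeting survives along the lifted twistor lines and that the \'etale property is preserved globally in the presence of singularities; concretely, one must certify that the twistor deformations of \cite[Theorem 5.4]{Menet-2020} and their period behaviour are insensitive to the $\Q$-factorial terminal singularities, which is where the hypothesis $\codim\Sing X\geq 4$ and the Hodge-theoretic results on $H^2$ recalled above do the work. In the smooth case this is the core of Verbitsky's theorem and exploits the full hyperk\"ahler metric. Once the covering property is secured, the conclusion is immediate: a connected covering of the simply connected $\mathcal{D}$ is one-sheeted, so $\mathscr{P}$ is a homeomorphism, and being a holomorphic local isomorphism it is the desired biholomorphism $\overline{\mathcal{M}_\Lambda}^\circ\cong\mathcal{D}$.
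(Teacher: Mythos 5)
First, a point of order: this paper does not prove Theorem \ref{mainGTTO} at all --- it is recalled verbatim from \cite[Theorem 1.1]{Menet-2020}, whose proof adapts Verbitsky's global Torelli argument (in Huybrechts' formulation) to orbifolds. Your outline reproduces the broad skeleton of that cited proof (local Torelli, Hausdorff reduction, twistor lines, simple connectivity of $\mathcal{D}$), but as a proof it has a genuine gap, and it sits exactly where you yourself flag the ``hard part'': the covering step is asserted and deferred, not proved, and it is the entire content of the theorem. Concretely, your lifting claim fails as stated. A twistor line $T_W$ through $\mathscr{P}(\bar p)$ lifts to a twistor family through a representative $(X,\varphi)$ of $\bar p$ only if the positive line $W\cap \varphi(H^{1,1}(X,\R))$ is spanned by the image of a K\"ahler class of $X$; since $\mathcal{K}_X$ is in general a proper subcone of the positive cone $\mathcal{C}_X$, there is no reason for this to hold --- the line can lie on a wall or in a different chamber, and it is precisely at such walls that distinct non-separated points of $\mathcal{M}_\Lambda^\circ$ occur. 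This is not a technicality about orbifold singularities (Theorem \ref{Twistor} and the local Torelli theorem of \cite{Menet-2020} already handle those); it is the crux even in the smooth case. To make path lifting work you would need to know that \emph{some} representative in the non-separated fiber $\bar p$ carries the required K\"ahler class, i.e.\ essentially that the positive cone is covered by K\"ahler-type chambers of non-separated models --- a statement of the same depth as the theorem, which in the present paper is deduced \emph{from} Theorem \ref{mainGTTO} together with Proposition \ref{oldprop}, so invoking it here would be circular. Note also that ``\'etale $+$ path lifting $\Rightarrow$ covering'' is itself delicate for non-proper local homeomorphisms: this is the very step at which Verbitsky's original published argument later required correction.

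The same defect already infects your surjectivity argument: ``the image contains the whole twistor line through any of its points'' is false in general, since only twistor lines attached to actual K\"ahler classes lift. The standard repair --- and what \cite{Menet-2020} does --- is to route everything through \emph{very general} points, where $\mathcal{K}_X=\mathcal{C}_X$ by \cite[Corollary 5.6]{Menet-2020}, so that every positive three-space through such a period gives a liftable twistor line, combined with the fact that $\mathcal{D}$ is connected by \emph{generic} twistor lines meeting in very general points (\cite[Proposition 3.7]{Huybrechts12}); injectivity of the period map over very general points (\cite[Proposition 3.22]{Menet-2020}) then replaces your unproved ``no hidden sheeting'' assertion. This is exactly the mechanism used in this paper's Lemma \ref{connected}. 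In short: the strategy is the right one and matches the cited proof in outline, but the two load-bearing claims --- lifting of twistor lines through arbitrary points, and the resulting covering property --- are missing, and they are what the theorem actually consists of.
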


\subsection{Positive cone}\label{posi}
The \emph{positive cone} of $X$, which we denote by $\mathcal{C}_X$, is the connected component of $$\left\{\left.\alpha\in H^{1,1}(X,\R)\right|\ q(\alpha)>0\right\}$$ containing the K\"ahler cone (see Section \ref{basicdef}). The following result about the elements of the positive cone will be used several times in this paper.

\begin{prop}[\cite{Menet-2020}, Proposition 5.5]\label{oldprop}
Let $X$ be an irreducible symplectic orbifold, and $\alpha\in \mathcal{C}_X$ be a very general element of the positive cone, i.e. $\alpha$ is contained in the complement of countably many nowhere dense closed subsets. Then there exist two flat proper families $\mathcal{X}\rightarrow S$ and $\mathcal{X}'\rightarrow S$ of irreducible symplectic orbifolds over a one-dimensional disk $S$ and a bimeromorphism $F:\mathcal{X}'\dashrightarrow\mathcal{X}$ compatible with the projection to $S$, such that $F$ induces an isomorphism $\mathcal{X}'_{|S\smallsetminus\left\{0\right\}}\simeq \mathcal{X}_{|S\smallsetminus\left\{0\right\}}$, the special fiber $\mathcal{X}_0$ is isomorphic to $X$, and $F^*\alpha$ is a K\"ahler class on $\mathcal{X}'_0$.
\end{prop}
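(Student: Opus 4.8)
The plan is to deform $X$ within the locus where $\alpha$ stays of type $(1,1)$ until the period is generic enough that $\alpha$ becomes visibly K\"ahler, and then to read off the two families and the bimeromorphism $F$ from the non-separated structure of the moduli space over the period of $X$. The first ingredient is the very generality of $\alpha$: the rational hyperplanes $\delta^\perp\cap\mathcal{C}_X$, for $\delta\in H^2(X,\Z)\smallsetminus\{0\}$, form countably many nowhere dense closed subsets, so $(\alpha,\delta)_q\neq 0$ for every such $\delta$ and no nonzero integral class is orthogonal to $\alpha$. Fixing a marking $\varphi$ and setting $p:=\mathscr{P}(X)=\varphi(\sigma_X)$, the relation $\alpha\in H^{1,1}(X,\R)$ gives $(\alpha,p)_q=0$, so $p$ lies on the sub-period domain $\mathcal{D}_\alpha:=\mathcal{D}\cap\varphi(\alpha)^\perp$; every period in $\mathcal{D}_\alpha$ corresponds to an orbifold on which the transported class $\alpha$ is again of type $(1,1)$.

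Next I would choose a holomorphic one-dimensional disk $S\to\mathcal{D}_\alpha$, $s\mapsto p_s$, with $p_0=p$ and with $p_s$ very general in $\mathcal{D}$ for $s\neq 0$. For very general $p_s$ no nonzero integral class is of type $(1,1)$, so the associated orbifold has trivial N\'eron--Severi group; by the description of the positive cone its K\"ahler cone then equals its full positive cone, and $\alpha$---which remains in the positive component along $S$ by continuity, since $q(\alpha)>0$---is K\"ahler on that fiber. Using that the period map is a local isomorphism and that $\mathscr{P}\colon\overline{\mathcal{M}_\Lambda}^\circ\to\mathcal{D}$ is an isomorphism (Theorem \ref{mainGTTO}), I would assemble from the Kuranishi deformations a smooth proper family of irreducible symplectic orbifolds over $S$ realizing these periods and transport $\alpha$ to all of its fibers.

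The decisive point is the central fiber. Over $S\smallsetminus\{0\}$ the periods are very general, the corresponding points of $\mathcal{M}_\Lambda$ are separated, and the lift of $S$ is unique; over $p_0=p$, however, the fiber of $\mathcal{M}_\Lambda\to\overline{\mathcal{M}_\Lambda}$ may contain several non-separated points, which are pairwise bimeromorphic models of $X$. Since $\alpha$ meets no wall it lies in the interior of a single chamber and thereby singles out the model $X'$ on which it is K\"ahler. Taking $\mathcal{X}\to S$ to be the lift with $\mathcal{X}_0\cong X$ and $\mathcal{X}'\to S$ the lift with $\mathcal{X}'_0\cong X'$, the two families coincide away from $0$, and a Huybrechts-type bimeromorphy statement for non-separated fibers (as in \cite{Huybrechts2003}) produces a bimeromorphism $F\colon\mathcal{X}'\dashrightarrow\mathcal{X}$ over $S$, an isomorphism over $S\smallsetminus\{0\}$, with $F^*\alpha$ K\"ahler on $\mathcal{X}'_0$.

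I expect the main obstacle to lie precisely in this last step: realizing the two lifts as genuine smooth proper families, producing the simultaneous bimeromorphism $F$ over $S$ that restricts to an isomorphism off the origin, and---most delicately---verifying that the limit of the K\"ahler classes of the generic fibers is a true K\"ahler class on the specific model $\mathcal{X}'_0$, rather than merely a nef or boundary class. This combines the orbifold twistor-space construction, the inseparability structure of $\mathcal{M}_\Lambda$, and closedness properties of the orbifold K\"ahler cone, and it is exactly here that the very generality of $\alpha$, forcing it off every wall, is indispensable.
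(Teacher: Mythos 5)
You should first be aware that the paper never proves Proposition \ref{oldprop}: it is imported verbatim from \cite[Proposition 5.5]{Menet-2020} (the paper only explains afterwards what $F^*\alpha$ means), so your attempt has to be measured against the Huybrechts-style proof given there. Your opening does match that strategy: very generality of $\alpha$ is read as $(\alpha,\delta)_q\neq 0$ for every nonzero integral class $\delta$, one deforms $X$ inside the locus where $\alpha$ stays of type $(1,1)$, and on fibers with trivial Picard group the K\"ahler cone coincides with the positive cone, so the transported class becomes K\"ahler there. From that point on, however, the proposal has genuine gaps, one of them fatal.

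The fatal gap is the construction of $\mathcal{X}'_0=X'$. You obtain it by asserting that, among the non-separated points of $\mathcal{M}_\Lambda$ lying over $\mathscr{P}(X,\varphi)$, the class $\alpha$ ``meets no wall'', hence ``lies in the interior of a single chamber'', and hence ``singles out the model $X'$ on which it is K\"ahler''. This is precisely the statement to be proven: nothing available at this stage excludes that the non-separated fiber over $\mathscr{P}(X,\varphi)$ is the single point $(X,\varphi)$ while $\alpha\notin\mathcal{K}_X$. In this paper, the entire chamber picture you appeal to --- wall divisors (Definition \ref{walldefi} and Section \ref{sec:wall-divs}), the statement that every very general class of $\mathcal{C}_X$ lies in a K\"ahler-type chamber, and its refinement Corollary \ref{cor:alphamovedtoK} --- is deduced \emph{from} Proposition \ref{oldprop}, via Lemma \ref{intercone}, the generalized Boucksom criterion and Proposition \ref{mainprop2}; invoking it here is circular, and no independent argument is offered. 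In the actual proof the model $X'$, the second family, and the K\"ahlerness of the limit class are produced by the limit argument you explicitly postpone in your last paragraph: one takes the family over the punctured disk with the K\"ahler classes $\alpha_s$ on its very general fibers, bounds volumes of the graphs of the fiberwise isomorphisms by the Fujiki relation, and applies Bishop--Barlet compactness of cycle spaces to obtain the limiting family together with a cycle $\Gamma=Z+\sum Y_i\subset X\times X'$, checking then that the limit class is genuinely K\"ahler and not merely nef. Labelling this ``the main obstacle'' and leaving it open means the heart of the proof is missing. A secondary but real error: you cannot choose the disk so that $p_s$ is very general for \emph{all} $s\neq 0$. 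The hyperplanes $\delta^\perp$ with $(\delta,p)_q\neq 0$ accumulate at $p$ (density of the Noether--Lefschetz loci), and an open-mapping argument shows that every holomorphic disk through $p$ meets infinitely many of them; very generality can only be arranged outside a countable subset of $S$. Consequently your uniqueness-of-lifts argument only shows that the two families agree on that smaller set, not over all of $S\smallsetminus\{0\}$ as the proposition asserts --- yet another point where the compactness argument, rather than pure moduli-space formalism, is what closes the gap.
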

In this proposition by $F^*(\alpha)$ we refer to the following.
If $\mathcal{X}'\leftarrow \mathcal{Z} \rightarrow \mathcal{X}$ resolves the bimeromorphic map $F$, we can consider $\Gamma=\Ima (\mathcal{Z}_0\rightarrow X\times \mathcal{X}'_0)$, with $\mathcal{Z}_0$ the fiber over $0$ of $\mathcal{Z}\rightarrow S$. We consider $p: \Gamma\rightarrow X$ and $p': \Gamma\rightarrow \mathcal{X}'_0$ the projections.
Then, we set $F^*(\alpha):=p'_*(\Gamma\cdot p^*(\alpha))$.

\begin{remark}
  In Section \ref{sec:wall-divs} we will introduce the concept of wall divisor. This will make it possible to replace the
  condition that $\alpha$ is very general by a much more explicit condition, which can be verified for
  individual elements $\alpha \in \dC_X$ (compare Corollary 
  \ref{cor:alphamovedtoK}).
\end{remark}

\subsection{Twistor space}\label{Twist}
Let $\Lambda$ be a lattice of signature $(3,\rk\Lambda-3)$. We denote by "$\cdot$" its bilinear form.
A \emph{positive three-space} is a subspace $W\subset \Lambda\otimes\R$ such that $\cdot_{|W}$ is positive
definite.
\begin{defi}\label{def:twistorline}
  For any positive three-space $W$, we define the associated \emph{twistor line} $T_W$ as
$$T_W:=\mathcal{D}\cap \mathbb{P}(W\otimes\C).$$
A twistor line is called \emph{generic} if $W^{\bot}\cap \Lambda=0$. A point of $\alpha\in \mathcal{D}$ is called
\emph{very general} if $\alpha^{\bot}\cap \Lambda=0$.
\end{defi}
\begin{defi}
Let $W$ be a positive three-space endowed with an orientation. Let $p:\Lambda\rightarrow W$ be the orthogonal projection. 
We set:
$$O^+(\Lambda):=\left\{\left.g\in O(\Lambda)\right|\ p\circ g_{|W}\ \text{respects the orientation of}\ W\right\}.$$
\end{defi}
\begin{thm}[\cite{Menet-2020}, Theorem 5.4]\label{Twistor}
  Let $(X,\varphi)$ be a marked irreducible symplectic orbifold with $\varphi:H^2(X,\Z)\rightarrow \Lambda$. Let $\alpha$ be a Kähler class on $X$, and $W_\alpha\coloneqq\Vect_{\R}(\varphi(\alpha),$ $\varphi(\Rea \sigma_X),\varphi(\Ima \sigma_X))$. 
Then:
\begin{itemize}
\item[(i)]
There exists a metric $g$ and three complex structures (see \cite[Section 5.1]{Menet-2020} for the definition) $I$, $J$ and $K$ in quaternionic relation on $X$ such that:
$$\alpha= \left[g(\cdot,I\cdot)\right]\ \text{and}\ g(\cdot,J\cdot)+ig(\cdot,K\cdot)\in H^{0,2}(X).$$
\item[(ii)]
There exists a deformation of $X$: 
$$\mathscr{X}\rightarrow T(\alpha)\simeq\mathbb{P}^1,$$ such that the period map
$\mathscr{P}:T(\alpha)\rightarrow T_{W_\alpha}$ provides an isomorphism. Moreover, for each $s=(a,b,c)\in
\mathbb{P}^1$, the associated fiber $\mathscr{X}_s$ is an orbifold diffeomorphic to $X$ endowed with the
complex structure $aI+bJ+cK$ (identifying $\bP^1$ with $S^2=\{(a,b,c)\in \bR^3 \,|\, a^2+b^2+c^2 = 1\}$).
\end{itemize}
\end{thm}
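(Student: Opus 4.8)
The plan is to follow the classical twistor construction for compact hyperk�hler manifolds, adapting each analytic and geometric step to the orbifold setting. For part (i), I would first observe that the holomorphic symplectic form trivialises the canonical sheaf of $X_{reg}$, so that $c_1(X)=0$; combined with $\codim \Sing X\geq 4$, this places us in a position to invoke the orbifold version of the Calabi--Yau theorem. This yields a unique Ricci-flat K�hler orbifold metric $g$ in the prescribed K�hler class $\alpha$ (smooth in every local uniformizing system). Since $g$ is Ricci-flat and carries a parallel holomorphic symplectic form, its restricted holonomy on $X_{reg}$ reduces to $\mathrm{Sp}(n)$, and the usual quaternionic linear algebra produces three complex structures $I,J,K$ satisfying the quaternion relations, with $I$ the given complex structure. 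After normalising $\sigma_X$ suitably one obtains $\alpha=[g(\cdot,I\cdot)]$ and $g(\cdot,J\cdot)+i\,g(\cdot,K\cdot)$ equal, up to a scalar, to a multiple of $\overline{\sigma}_X$, hence of type $(0,2)$; smoothness in the orbifold charts follows because $g$, and therefore every parallel tensor built from $g$ and $\sigma_X$, is orbifold-smooth.

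For part (ii), I would build the twistor space as the $C^\infty$ orbifold $\mathscr{X}:=X\times \mathbb{P}^1$ equipped with the almost complex structure which, over a point $s=(a,b,c)\in S^2\simeq \mathbb{P}^1$, restricts on the fibre to $aI+bJ+cK$ and equals the standard complex structure on the base. The key point is integrability: this is checked in each local uniformizing system, where the covering is an honest hyperk�hler manifold and the classical result (Hitchin, Salamon) applies. Since the finite groups of the uniformizing systems act by isometries preserving $g,I,J,K$, they act holomorphically on the local twistor spaces, so the structures glue to a genuine complex orbifold structure on $\mathscr{X}$ and the projection $\pi:\mathscr{X}\rightarrow \mathbb{P}^1$ is holomorphic. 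Setting $T(\alpha):=\mathbb{P}^1$, the fibre $\mathscr{X}_s$ is by construction an orbifold diffeomorphic to $X$ carrying the complex structure $aI+bJ+cK$, and each such fibre is again irreducible symplectic, which gives the desired deformation.

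It remains to identify the period map. Using the identifications $u_s$, the period of $\mathscr{X}_s$ is the line spanned by the $(2,0)$-form for the complex structure $aI+bJ+cK$, an explicit $\R$-linear combination of the harmonic forms $\omega_I=g(\cdot,I\cdot)$, $\omega_J=g(\cdot,J\cdot)$ and $\omega_K=g(\cdot,K\cdot)$. Passing to cohomology and applying the marking $\varphi$, the classes of these three forms span, over $\R$, the same three-space as $\varphi(\alpha)$, $\varphi(\Rea \sigma_X)$ and $\varphi(\Ima \sigma_X)$, namely $W_\alpha$; hence every such period lies in $\mathbb{P}(W_\alpha\otimes\C)\cap \mathcal{D}=T_{W_\alpha}$. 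The resulting map $\mathscr{P}:T(\alpha)\rightarrow T_{W_\alpha}$ is holomorphic between two copies of $\mathbb{P}^1$ (the twistor line being a smooth conic). Since $\mathscr{P}$ is a local isomorphism (Section \ref{per}), it suffices to check that distinct points of $S^2$ have distinct periods; this is the standard bijectivity of the twistor-line parametrisation, and it forces $\mathscr{P}$ to have degree one, hence to be an isomorphism.

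The main obstacle lies entirely in part (i): one must have at one's disposal a Calabi--Yau theorem for compact K�hler orbifolds with trivial canonical class and, crucially, the holonomy reduction that upgrades the Ricci-flat metric to a genuine orbifold hyperk�hler structure whose three complex structures are smooth in the uniformizing charts. Once the datum $(I,J,K,g)$ is available as orbifold structure, the construction of $\mathscr{X}$, the verification of integrability, and the identification of the period map are formal consequences of the smooth theory applied chart by chart, together with the compatibility of the finite group actions with the quaternionic structure.
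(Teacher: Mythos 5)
Note that this paper does not prove Theorem \ref{Twistor} at all: it is imported verbatim from \cite[Theorem 5.4]{Menet-2020}, so there is no internal proof to compare against. Your proposal reconstructs essentially the same argument as that reference: the orbifold Calabi--Yau theorem producing a Ricci-flat metric in the class $\alpha$, the Bochner/holonomy reduction to $\mathrm{Sp}(n)$ giving the quaternionic triple $(I,J,K)$ smooth in the uniformizing charts, the chart-by-chart integrability of the twisted almost complex structure on $X\times\bP^1$ (using that the local groups act by hyperk\"ahler isometries), and the identification of the period map with the twistor line via the local Torelli theorem; you also correctly isolate the orbifold Calabi--Yau theorem and the orbifold-smoothness of the parallel tensors as the genuinely non-formal inputs.
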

\begin{rmk}
Note that if the irreducible symplectic orbifold $X$ of the previous theorem is endowed with a marking then all the fibers of $\mathscr{X}\rightarrow T(\alpha)$ are naturally endowed with a marking.
%
\end{rmk}

For the statement of the next lemma, we recall the definition of a parallel transport operator.
\begin{defi}\label{transp}
Let $X_1$ and $X_2$ be two irreducible symplectic orbifolds. An isometry $f:H^{2}(X_{1},\Z)\rightarrow H^{2}(X_{2},\Z)$ is called a \emph{parallel transport operator} if there exists a deformation $s:\mathcal{X}\rightarrow B$, two points $b_{i}\in B$, two isomorphisms $\psi_{i}:X_{i}\rightarrow \mathcal{X}_{b_{i}}$, $i=1,2$ and a continuous path $\gamma:\left[0,1\right]\rightarrow B$ with $\gamma(0)=b_{1}$, $\gamma(1)=b_{2}$ and such that the parallel transport in the local system $R^2s_{*}\Z$ along $\gamma$ induces the morphism $\psi_{2*}\circ f\circ\psi_{1}^{*}: H^{2}(\mathcal{X}_{b_{1}},\Z)\rightarrow H^{2}(\mathcal{X}_{b_{2}},\Z)$.
\end{defi}
\begin{rmk}
Note that a deformation of irreducible symplectic orbifolds is always locally trivial (see \cite[Proposition 3.10]{Menet-2020}).
\end{rmk}
\begin{defi}\label{Mon}
Let $X$ be an irreducible symplectic orbifold. The group of parallel transport operators from $X$ to itself is a subgroup of $O(H^2(X,\Z))$, called the \emph{monodromy group} of $X$, and denoted by $\Mon^2(X)$. 
\end{defi}
\begin{lemme}\label{connected}
Let $\mathscr{M}_{\Lambda}^{\circ}$ be a connected component of the moduli space $\mathscr{M}_{\Lambda}$.
Two marked irreducible symplectic orbifolds $(X,\varphi)$, $(Y,\psi)$ in the  $\mathscr{M}_{\Lambda}^{\circ}$ are connected by twistor spaces. 
That is: There exists a sequence of generic twistor spaces $f_i:\mathscr{X}_i\rightarrow \mathbb{P}_i^1$ with
$(x_i,x_i')\in\mathbb{P}_i^1\times\mathbb{P}_i^1$, $i\in \left\{0,...,k\right\}$, $k\in \mathbb{N}$ such that 
$$f^{-1}_0(x_0)\simeq (X,\varphi),\ f^{-1}_i(x_{i}')\simeq f^{-1}_{i+1}(x_{i+1})\ \text{and}\ f^{-1}_{k}(x_{k}')\simeq (Y,\psi),$$
for all $0\leq i\leq k-1$, which induces $\psi^{-1}\circ \phi$ as parallel transport operator.
\end{lemme}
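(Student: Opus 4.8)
The plan is to work entirely inside the connected marked moduli space $\mathcal{M}_{\Lambda}^{\circ}$ and to argue by an open--closed dichotomy. I call two points of $\mathcal{M}_{\Lambda}^{\circ}$ \emph{twistor-equivalent} if they can be joined by a finite chain of generic twistor families as in Theorem \ref{Twistor}, where each successive family is built out of a fibre of the preceding one, so that consecutive families share a marked fibre by construction. This is an equivalence relation whose classes cover $\mathcal{M}_{\Lambda}^{\circ}$; since $\mathcal{M}_{\Lambda}^{\circ}$ is connected, it suffices to prove that each class is open, for then there is a single class and $(X,\varphi)$, $(Y,\psi)$ are automatically joined by such a chain. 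Working with marked orbifolds directly avoids any matching of period points at the endpoints: connectedness delivers a chain from the marked orbifold $(X,\varphi)$ to the marked orbifold $(Y,\psi)$ themselves.

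To prove openness I would first record the infinitesimal picture, using that the period map $\mathscr{P}$ is a local isomorphism (Section \ref{per}). Fix $(Z,\chi)\in\mathcal{M}_{\Lambda}^{\circ}$ with period point $p=\mathscr{P}(Z,\chi)$, corresponding to the positive oriented $2$-plane $P=\langle \Rea\chi(\sigma_Z),\Ima\chi(\sigma_Z)\rangle\subset\Lambda\otimes\R$; then $\mathscr{P}$ identifies a neighbourhood of $(Z,\chi)$ with a neighbourhood of $p$ in $\mathcal{D}$, and $T_p\mathcal{D}\cong\Hom(P,P^{\bot})$. For a K\"ahler class $\alpha$ on $Z$, the twistor family of Theorem \ref{Twistor} has period image $T_{W_\alpha}$ with $W_\alpha=P\oplus\R\,\chi(\alpha)$, and its tangent line at $p$ is $\Hom(P,\R\,\chi(\alpha))$. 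Now $\chi$ sends $H^{1,1}(Z,\R)$ isometrically onto $P^{\bot}$, so $\chi(\mathcal{K}_Z)$ is a nonempty open cone in $P^{\bot}$; the lines $\R\,\chi(\alpha)$ for $\alpha\in\mathcal{K}_Z$ therefore span $P^{\bot}$, and hence the rank-one homomorphisms $\Hom(P,\R\,\chi(\alpha))$ span all of $\Hom(P,P^{\bot})=T_p\mathcal{D}$. Thus twistor directions coming from genuine K\"ahler classes of $Z$ already span the tangent space.

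The hard part will be to convert this spanning statement into honest openness while using only \emph{generic} twistor lines. I would pick K\"ahler classes $\alpha_1,\dots,\alpha_N$ on $Z$ whose twistor directions $\xi_1,\dots,\xi_N$ span $T_p\mathcal{D}$ and whose associated three-spaces $W_{\alpha_j}$ are generic in the sense of Definition \ref{def:twistorline}; genericity costs nothing, since the locus of positive three-spaces $W$ with $W^{\bot}\cap\Lambda\neq0$ is a countable union of proper closed conditions, so it can be avoided without destroying the open spanning property. Because K\"ahler classes persist and vary smoothly under small deformation, one can follow, successively and for small times $t_1,\dots,t_N$, generic twistor families through the orbifold reached at each stage, with directions depending smoothly on the base point and equal to $\xi_j$ at $p$; transporting this construction through the local isomorphism $\mathscr{P}$ shows that it defines a smooth map from a neighbourhood of $0\in\R^{N}$ into $\mathcal{M}_{\Lambda}^{\circ}$ whose differential at $0$ is the surjection $(s_j)\mapsto\sum_j s_j\xi_j$. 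By the inverse function theorem its image contains an open neighbourhood of $(Z,\chi)$, every point of which is twistor-equivalent to $(Z,\chi)$; hence the classes are open. (When $b_2(X)=3$ the lattice $\Lambda$ is positive definite, $\mathcal{D}$ is a single generic twistor line, and the statement is immediate.)

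It remains to identify the resulting parallel transport operator. By the remark following Theorem \ref{Twistor}, the fibres of each twistor family are canonically marked by parallel transport over the simply connected base $\mathbb{P}^1$, so within a single family the operator induced on $\Lambda$ is the identity; as consecutive families in the chain share a \emph{marked} fibre, the composite parallel transport along the whole chain is the identity on $\Lambda$ when read through the markings. Since the chain runs from $(X,\varphi)$ to $(Y,\psi)$, the induced isometry $H^2(X,\Z)\rightarrow H^2(Y,\Z)$ is exactly $\psi^{-1}\circ\varphi$, which is the claimed parallel transport operator.
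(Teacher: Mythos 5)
Your infinitesimal picture is correct (the twistor directions $\Hom(P,\R\,\chi(\alpha))$, $\alpha\in\mathcal{K}_Z$, do span $T_p\mathcal{D}\cong\Hom(P,P^{\bot})$), and the open--closed strategy in $\mathcal{M}_\Lambda^{\circ}$ is a genuinely different route from the paper, which instead works at \emph{very general} points: there $\mathcal{K}=\mathcal{C}$ by \cite[Corollary 5.6]{Menet-2020}, so \emph{every} generic twistor line through such a period point lifts to a twistor family via Theorem \ref{Twistor}; the connectivity is then imported wholesale from the period domain (\cite[Proposition 3.7]{Huybrechts12}, with intersections at very general points), and the lifted families are glued using injectivity of the period map at very general points (\cite[Proposition 3.22]{Menet-2020}); a non--very-general starting point is handled by one preliminary twistor family attached to a very general K\"ahler class. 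That reduction is precisely what lets the paper avoid the two places where your argument has genuine gaps.

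The first gap is genericity. The lemma asserts a chain of \emph{generic} twistor spaces (Definition \ref{def:twistorline}), and you only arrange genericity of the directions $\xi_1,\dots,\xi_N$ at the center $s=0$. For $s\neq 0$ the three-spaces $W_j(s)$ used at stages $j\geq 2$ move with the base point $q_{j-1}(s)$, and nothing in your construction prevents them from meeting $\Lambda$ nontrivially. So what the inverse function theorem yields is that every point of a neighborhood is reached by \emph{some} twistor chain, not by a generic one; and you cannot repair a single non-generic link by perturbation, because perturbing the $j$-th family moves all subsequent base points, i.e.\ you would need the bad locus to be nowhere dense \emph{inside each fibre} $F^{-1}(u)$, which is an additional (Baire-type, and not obviously true as stated) argument you never make. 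The second gap is the smoothness of $F$ itself: your multi-stage construction needs, at each stage, a K\"ahler class on the varying fibre $q_{j-1}(s)$ depending smoothly on $s$ with prescribed value at $s=0$. This is a parametrized Kodaira--Spencer stability statement for \emph{orbifold} families; the paper's toolkit only provides the pointwise statement that nearby fibres of a Kuranishi family are again primitively symplectic (\cite[Proposition 3.16]{Menet-2020}), and you assert the smooth parametrized version without proof. Both gaps are plausibly fixable, but as written the key step "spanning $\Rightarrow$ openness by chains of generic twistor families" does not go through, which is exactly the difficulty the paper's reduction to very general points is designed to circumvent.
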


\begin{proof}We prove the Lemma by distinguishing two cases.
  
\textbf{First case}: We assume that $(X,\varphi)$ and $(Y,\psi)$ are very general (that is $\Pic X=0$ and $\Pic Y=0$). 
By \cite[Proposition 3.7]{Huybrechts12}  the period domain $\dD_{\Lambda}$
is connected by generic twistor lines. Note that the proof of \cite[Proposition 3.7]{Huybrechts12} in fact shows
that the twistor lines can be chosen in such a way that they intersect in very general points of $\dD_{\Lambda}$.
In particular, we can connect  $\mathscr{P}(Y,\psi )$ and $\mathscr{P}(X,\phi)$ by
such generic twistor lines in $\dD_{\Lambda}$.
  Since for a very general element $(\widetilde{X},\widetilde{\phi})$ of $\dM_\Lambda$ we have
  $\dK_{\widetilde{X}}=\dC_{\widetilde{X}}$, by \cite[Corollary 5.6]{Menet-2020}, Theorem \ref{Twistor} shows that all these twistor lines can be lifted to
twistor spaces. By \cite[Proposition 3.22]{Menet-2020} the period map $\cP$ is injective on the set of very
general points in $\dD_\Lambda$. Therefore, all these twistor spaces intersect and connect $(X,\varphi)$ to $(Y,\psi)$.

\textbf{Second case}: If $(X,\varphi)$ is not very general, we consider 
a very general Kähler class $\alpha$. Then the associated twistor space $\mathscr{X}\rightarrow T(\alpha)$
has a fiber which is a very general marked irreducible symplectic orbifold. Hence we are back to the first case.
\end{proof}

\section{Hodge version of the global Torelli theorem}
\subsection{Proof of Theorem \ref{main}}\label{proof}
\begin{lemme}\label{orbiornotorbi}
Let $X$ be an orbifold and $f:\widetilde{X}\rightarrow X$ the blow-up of $X$ in a smooth subvariety $Y$ with $\codim_X(Y)\geq2$.
Let $E$ be the exceptional divisor of $f$.  Then:
$$H^2(\widetilde{X},\R)\simeq H^2(X,\R)\oplus\R\left[E\right].$$
\end{lemme}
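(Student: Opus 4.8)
The plan is to compute $H^2(\widetilde{X},\R)$ via the Leray spectral sequence of the proper map $f$, exploiting that $f$ is an isomorphism over $X\smallsetminus Y$ so that all higher direct images are concentrated along $Y$. I work with real coefficients throughout, which is harmless because an orbifold is a rational (hence real) homology manifold; in particular Poincar\'e--Lefschetz duality is available with $\R$-coefficients, and $X$ is normal by Remark \ref{normal}.

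First I would identify the sheaves $R^qf_*\R$. Normality of $X$ together with connectedness of the fibers of $f$ gives $R^0f_*\R=\R_X$. For $q>0$ the sheaf $R^qf_*\R$ is supported on $Y$, with stalk at $y\in Y$ equal to $H^q$ of the fiber $f^{-1}(y)$. This fiber is the exceptional fiber of the blow-up (a projective space when $Y$ lies in the smooth locus, and the minimal resolution of the transverse quotient singularity otherwise); in every relevant case it is simply connected with cohomology in even degrees only and one-dimensional $H^2$. Hence $R^1f_*\R=0$ and $R^2f_*\R\simeq\R_Y$.

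Next I read off the $E_2$-page $E_2^{p,q}=H^p(X,R^qf_*\R)\Rightarrow H^{p+q}(\widetilde{X},\R)$ in total degree $2$. The only possibly nonzero contributions are $E_2^{2,0}=H^2(X,\R)$, $E_2^{1,1}=H^1(Y,R^1f_*\R)=0$, and $E_2^{0,2}=H^0(Y,\R)\simeq\R$, using that $Y$ is connected. The term $E_2^{2,0}$ receives no incoming differential, since $E_2^{0,1}=H^0(Y,R^1f_*\R)=0$, so $f^*$ injects $H^2(X,\R)$ into $H^2(\widetilde{X},\R)$. The only map that could kill $E_2^{0,2}$ is the transgression $d_3\colon E_3^{0,2}\to H^3(X,\R)$.

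Finally I would produce the splitting using $[E]$ itself. The class $[E]\in H^2(\widetilde{X},\R)$ restricts to each fiber $f^{-1}(y)$ as a nonzero multiple of the hyperplane (respectively $(-2)$-curve) class, so its image in $E_\infty^{0,2}\subseteq E_2^{0,2}\simeq\R$ is nonzero; this forces $E_\infty^{0,2}=\R$ and $d_3=0$, whence $H^2(\widetilde{X},\R)\to\R$ is onto. Since $f^*H^2(X,\R)$ restricts trivially to every fiber, we get $[E]\notin f^*H^2(X,\R)$, and therefore $H^2(\widetilde{X},\R)\simeq H^2(X,\R)\oplus\R[E]$. The main obstacle is the orbifold bookkeeping: one must check that the direct-image computation and the collapse of the spectral sequence remain valid outside the manifold category, and that the fiber of $f$ over $Y$ really has one-dimensional $H^2$. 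Both points are handled by the rational-homology-manifold property of orbifolds together with a local analysis of the blow-up along $Y$, and the degeneration is obtained for free from the existence of $[E]$ rather than needing the full decomposition theorem.
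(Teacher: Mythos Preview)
Your Leray spectral sequence argument is sound and takes a genuinely different route from the paper. The paper proceeds more elementarily: since $\codim Y\geq 2$, Fujiki's lemma \cite[Lemma 1.6]{Fujiki-1983} gives $H^2(X,\R)\simeq H^2(X\smallsetminus Y,\R)$, whence $f^*$ is injective by a simple commutative diagram; then, after passing to the smooth locus $V=\widetilde{X}\smallsetminus\Sing\widetilde{X}$ (again via Fujiki's lemma to identify $H^2(\widetilde{X},\R)\simeq H^2(V,\R)$), the long exact sequence of the pair $(V,V\smallsetminus E)$ together with the Thom isomorphism $H^2(V,V\smallsetminus E,\R)\simeq H^0(E\cap V,\R)$ supplies the missing summand $\R[E]$. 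Your approach packages the same information more structurally and makes the fiberwise picture explicit, while the paper's route sidesteps any analysis of fibers over $Y\cap\Sing X$ by simply excising the singular locus and using only the codimension-one Thom isomorphism. One small inaccuracy in your write-up: the fiber of the blow-up over a point of $Y\cap\Sing X$ is the projectivized normal cone (in a local uniformizing chart a finite quotient of a projective space), not a ``minimal resolution of the transverse quotient singularity''; the cohomological facts you actually use ($H^1=0$ and one-dimensional $H^2$) still hold for such quotients, so the argument survives, but the description should be corrected. Both arguments tacitly assume $Y$ (hence $E$) is connected; otherwise one obtains a summand $\R[E_i]$ for each component.
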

\begin{proof}
The map $f$ is the blow-up of $X$ in an analytic subset $Y$.
Let $U:=X\smallsetminus Y$. Since $\codim Y\geq2$, we have
$H^2(X,\R)\simeq H^2(U,\R)$ (see for instance \cite[Lemma 1.6]{Fujiki-1983}).
Then the commutative diagram
$$\xymatrix@C15pt{
 H^2(X,\R)\eq[d]\ar[r]^{f^*}&H^2(\widetilde{X},\R)\ar[dl]\\
 H^2(U,\R)&
    }$$
shows that $f^*:H^2(X,\R)\rightarrow H^2(\widetilde{X},\R)$ is an injection.
We have $f^*(H^2(X,\R))\oplus\R\left[E\right]\subset H^2(\widetilde{X},\R)$.
Let $V:=\widetilde{X}\smallsetminus \Sing \widetilde{X}$.
Still because of \cite[Lemma 1.6]{Fujiki-1983}, $H^2(\widetilde{X},\R)\simeq H^2(V,\R)$.
Then we conclude with the following exact sequence:
$$\xymatrix@C15pt{
&&H^2(X,\R)\eq[d]\\
 H^2(V,V\smallsetminus E,\R)\eq[d]\ar[r]&H^2(V,\R)\ar[r]&H^2(V\smallsetminus E,\R)\\
 H^0(E\cap V,\R), & &
    }$$
where the isomorphisms are given respectively by Thom's isomorphism and \cite[Lemma 1.6]{Fujiki-1983}. 
\end{proof}
\begin{lemme}\label{biracodim}
Let $f:X\dashrightarrow X'$ be a bimeromorphic map between two primitively symplectic orbifolds. Then $f$ is an isomorphism in codimension 1.
\end{lemme}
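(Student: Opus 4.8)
The plan is to reduce the statement to the claim that neither $f$ nor $f^{-1}$ contracts a divisor, and then to rule out such contractions using the holomorphic symplectic form together with the terminality of the singularities. Since $X$ is normal (Remark~\ref{normal}) and $X'$ is compact, the meromorphic map $f$ is a morphism away from an analytic subset of codimension $\geq 2$, and the same holds for $f^{-1}$; hence $f$ is an isomorphism in codimension $1$ precisely when no prime divisor of $X$ is contracted by $f$ and no prime divisor of $X'$ is contracted by $f^{-1}$. To analyze these contractions I would resolve the graph of $f$: let $\Gamma\subseteq X\times X'$ be the closure of the graph over the locus where $f$ is biholomorphic, and let $Z\rightarrow \Gamma$ be a resolution with $Z$ smooth and compact, giving proper bimeromorphic morphisms $p:Z\rightarrow X$ and $q:Z\rightarrow X'$.

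The heart of the argument is that the symplectic form trivializes the canonical class. Since $X$ and $X'$ are orbifolds whose singularities come from finite groups acting symplectically, hence inside $\mathrm{SL}$, the top power $\sigma_X^n$ (with $2n=\dim X$) is a nowhere-vanishing section of the canonical sheaf, so $K_X\cong \mathcal{O}_X$, and likewise $K_{X'}\cong \mathcal{O}_{X'}$. Pulling back, $p^*\sigma_X^n$ and $q^*\sigma_{X'}^n$ are global holomorphic sections of $K_Z$ (they have no poles because quotient singularities are canonical), and by definition of the discrepancies their zero divisors are $\div(p^*\sigma_X^n)=\sum_i a_i E_i$ and $\div(q^*\sigma_{X'}^n)=\sum_j b_j F_j$, where the $E_i$ are the $p$-exceptional and the $F_j$ the $q$-exceptional prime divisors, and all coefficients $a_i,b_j$ are strictly positive because $X$ and $X'$ have terminal singularities (\cite[Corollary 1]{Namikawa:note}).

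Next I would match the two forms. On the open set $U\subseteq X$ where $f$ is an isomorphism, uniqueness of the holomorphic $2$-form forces $f^*\sigma_{X'}=\lambda\,\sigma_X$ for some scalar $\lambda\neq 0$, so $p^*\sigma_X^n$ and $q^*\sigma_{X'}^n$ are proportional on the dense open subset $p^{-1}(U)$ of $Z$. As $Z$ is compact and connected, two holomorphic sections of the line bundle $K_Z$ that agree up to a scalar on a nonempty open set are globally proportional; therefore their zero divisors coincide, giving the equality of effective divisors $\sum_i a_i E_i=\sum_j b_j F_j$ on $Z$. Now suppose $f$ contracted a prime divisor $D\subseteq X$, and let $\widetilde{D}\subseteq Z$ be its strict transform. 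Then $\widetilde{D}$ is not $p$-exceptional, so it appears with coefficient $0$ on the left-hand side, while $q(\widetilde{D})=\overline{f(D)}$ has codimension $\geq 2$ in $X'$, so $\widetilde{D}$ is $q$-exceptional and appears with strictly positive coefficient on the right-hand side, contradicting the equality of divisors. The symmetric argument applied to $f^{-1}$ shows that no divisor of $X'$ is contracted either, which is exactly the assertion.

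The main obstacle I anticipate is the careful transposition of these canonical-bundle identities to the singular, purely analytic orbifold setting: verifying that $\sigma_X^n$ really extends to a nowhere-vanishing section of the dualizing sheaf across the (codimension $\geq 4$) singular locus, that the pullbacks $p^*\sigma_X^n$ and $q^*\sigma_{X'}^n$ are genuinely holomorphic with zero divisor equal to the discrepancy divisor, and that terminality yields the strict positivity $a_i,b_j>0$ that drives the contradiction. These are the points where the orbifold hypotheses (Gorenstein quotient singularities in codimension $\geq 4$, $\Q$-factoriality and terminality) must be invoked precisely.
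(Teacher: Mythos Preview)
Your argument is correct and is the standard MMP-style proof that a bimeromorphic map between terminal varieties with trivial canonical class is an isomorphism in codimension~1: comparing the discrepancy divisors of $p$ and $q$ on a common smooth model and using terminality to force the exceptional loci to agree. The technical points you flag at the end all go through here because $K_X$ is genuinely Cartier (trivialized by $\sigma_X^n$, which extends across $\Sing X$ since $\omega_X$ is reflexive and $\codim\Sing X\geq 2$), so the discrepancies are integers and terminality gives $a_i,b_j\geq 1$.

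The paper takes a different, more hands-on route. Rather than passing to a resolution, it works directly on the smooth loci: first it shows that $f$ sends the smooth locus of $X$ into the smooth locus of $X'$ (otherwise $f$ would crepantly resolve some singularity of $X'$, contradicting terminality), and then it argues that the set $Y=\{x: f^{-1}(f(x))\neq\{x\}\}$ cannot be a divisor because the pulled-back $2$-form $f^*\sigma'$ coincides with $\sigma_X$ and is therefore everywhere non-degenerate, whereas a divisorial $Y$ would force positive-dimensional fibres along which the $2$-form degenerates. Your approach is more conceptual and immediately generalizes beyond the symplectic setting to any pair of terminal varieties with $K\equiv 0$; the paper's approach avoids resolving the graph and exploits the symplectic $2$-form itself (rather than only its top power), which is closer in spirit to the rest of the article.
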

\begin{proof}
Since $X$ is normal (Remark \ref{normal}), $f$ is well defined on $U_0\subset X$, with $\codim X\smallsetminus
U_0\geq2$. Moreover, we can look at $U:=U_0\smallsetminus \Sing U_0$. We also have $\codim X\smallsetminus
U\geq2$. Let $U'$ be the smooth locus of $X'$.
First, observe that $f(U)$ is contained in $U'$. Indeed, if this was not the case, $f|_U$ would be a crepant resolution of some singularities of $X'$ which is impossible since $X'$ has only terminal singularities (see \cite[Corollary 1]{Namikawa:note}).

Consider the subset $Y\subset U$ consisting of all points $x$ such that
$f^{-1}(f(x))\neq\left\{x\right\}$. Let $\sigma'$ be the unique non-degenerate holomorphic 2-form on $U'$,
then $f^*(\sigma')$ is also the unique non-degenerate holomorphic 2-form on $U$ (up to a scalar) because
$f^*(\sigma')$ extends to all $X$ (see for instance \cite[Lemma 2.1]{Fujiki-1983}). Since $f^*(\sigma')$ is
non-degenerate, $Y$ cannot be a divisor, otherwise there would be positive dimensional fibers,
and they would lie in the set of degeneration of $f^*(\sigma')$ which is impossible. Finally, we set $V:= U\cap f^{-1}(U')\smallsetminus Y$ which verifies $\codim X\smallsetminus V\geq 2$ and $f:V\rightarrow X'$ is an injective bimeromorphic morphism. With the same argument, there exists $V'\subset X'$ such that 
$\codim X'\smallsetminus V'\geq 2$ and $f^{-1}:V'\rightarrow X$ is an injective bimeromorphic morphism. Then
$f:V\cap f^{-1}(V')\rightarrow V'\cap f(V)$ is an isomorphism, and, using the same arguments as above, we have $\codim
X\smallsetminus V\cap f^{-1}(V')\geq2$ and $\codim X\smallsetminus V'\cap f(V)\geq2$.
\end{proof}
\begin{prop}\label{mainprop}
Let $f:X'\dashrightarrow X$ be a bimeromorphic map between two primitively symplectic orbifolds. If $\alpha\in H^2(X,\R)$ is a class such that $\alpha\cdot \left[C\right]>0$ and $f^*(\alpha)\cdot \left[C'\right]>0$ for all rational curves $C\in X$ and $C'\in X'$, then $f$ extends to an isomorphism.
\end{prop}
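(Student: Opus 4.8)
The plan is to argue by contradiction: assuming $f$ is not an isomorphism, I will produce a rational curve violating one of the two positivity hypotheses. By Lemma \ref{biracodim} the map $f$ is already an isomorphism in codimension $1$ (this is the only place the symplectic structure enters). I resolve the bimeromorphic map by a sequence of blow-ups in smooth centres, obtaining a smooth space $Z$ with projective bimeromorphic morphisms $g\colon Z\to X$ and $h\colon Z\to X'$ such that $f=g\circ h^{-1}$ on the dense open locus $W\subseteq Z$ where both are isomorphisms. Since $f$ is an isomorphism in codimension $1$, a prime divisor of $Z$ is $g$-exceptional if and only if it is $h$-exceptional; call these common exceptional divisors $E_1,\dots,E_r$. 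Iterating Lemma \ref{orbiornotorbi} gives $H^2(Z,\R)=g^*H^2(X,\R)\oplus\bigoplus_i\R\,[E_i]$, and because $g^*\alpha$ and $h^*(f^*\alpha)$ restrict to the same class on $W$, their difference $D:=g^*\alpha-h^*(f^*\alpha)$ lies in $\bigoplus_i\R\,[E_i]$; in particular $g_*D=0$ and $h_*D=0$, so $D$ is both $g$- and $h$-exceptional.

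Next I establish the relevant relative nef-ness. Let $\gamma$ be any curve contracted by $g$. By the relative cone theorem the class of $\gamma$ lies in the cone generated by $g$-contracted rational curves $\Gamma$, and for each such $\Gamma$ the image $g(\Gamma)$ is a point while $h(\Gamma)$ is either a point or a rational curve $C'\subseteq X'$; in the latter case $f^*\alpha\cdot C'>0$ by hypothesis. Hence $h^*(f^*\alpha)\cdot\gamma=f^*\alpha\cdot h_*\gamma\ge 0$ and $g^*\alpha\cdot\gamma=0$, so $-D$ is $g$-nef. Symmetrically, using that $\alpha$ is positive on every rational curve of $X$, the class $g^*\alpha$ is $h$-nef, and since $h^*(f^*\alpha)\cdot\delta=0$ for $\delta$ in a fibre of $h$, the divisor $D$ is $h$-nef. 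Applying the negativity lemma to $g$ (with $g_*D=0$) yields $D\ge 0$, and applying it to $h$ (with $h_*D=0$) yields $D\le 0$. Therefore $D=0$, i.e. $g^*\alpha=h^*(f^*\alpha)$ in $H^2(Z,\R)$.

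Finally I use the assumed non-triviality. The hypothesis is symmetric under replacing $(f,X,X',\alpha)$ by $(f^{-1},X',X,f^*\alpha)$, since $(f^{-1})^*(f^*\alpha)=\alpha$; so, exchanging the two sides if necessary, I may assume that $f^{-1}$ has a point of indeterminacy $x\in X$, i.e. that the fibre $g^{-1}(x)$ is positive-dimensional. Working on the graph closure, the projection $h$ is injective on this fibre, so $h(g^{-1}(x))\subseteq X'$ is positive-dimensional; as $g^{-1}(x)$ is covered by rational curves, I may choose a rational curve $\tilde C\subseteq g^{-1}(x)$ that is not contracted by $h$, whose image $C':=h(\tilde C)$ is then a rational curve in $X'$. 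Now $g^*\alpha\cdot\tilde C=\alpha\cdot g_*\tilde C=0$ because $g$ contracts $\tilde C$, whereas $h^*(f^*\alpha)\cdot\tilde C=f^*\alpha\cdot C'>0$. This contradicts $D=0$, and the contradiction shows that neither $f$ nor $f^{-1}$ has a point of indeterminacy; being a bijective bimeromorphic map between normal spaces, $f$ extends to an isomorphism.

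I expect the main obstacle to be the second paragraph: upgrading ``positive on rational curves'' to genuine relative nef-ness requires that the relative cones $\overline{NE}(Z/X)$ and $\overline{NE}(Z/X')$ are generated by rational curves, and that the negativity lemma holds for these bimeromorphic morphisms. These are standard in the projective algebraic setting, but here they must be justified for bimeromorphic morphisms of $\Q$-factorial terminal Kähler orbifolds (relying on the fact that fibres of such morphisms are covered by rational curves); everything else is formal once the sign-reversal $D=0$ has been established.
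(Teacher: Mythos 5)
Your proposal has the same skeleton as the paper's proof: resolve $f$ by a tower of blow-ups in smooth centres (Hironaka), use Lemma \ref{biracodim} to identify the exceptional divisors of the two projections, use Lemma \ref{orbiornotorbi} to write $D:=g^{*}\alpha-h^{*}(f^{*}\alpha)=\sum_i a_i[E_i]$, prove $D=0$, and conclude that the two contractions have the same fibres. The genuine gap is in how you prove $D=0$. You assert, ``by the relative cone theorem'', that every $g$-contracted curve is a non-negative combination of $g$-contracted \emph{rational} curves (and similarly for $h$). The cone theorem does not say this: it produces rational generators only for the $K_Z$-negative part of $\overline{NE}(Z/X)$, and $g$ is not relatively Fano here. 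Indeed $K_Z=g^{*}K_X+\sum_i c_iE_i$ with $c_i>0$, and although some combination $-\sum n_iE_i$ is $g$-ample, $-K_{Z/X}$ need not even be $g$-nef: blow up a point of a fourfold, then a smooth curve $B$ inside the resulting $E_1\simeq\bP^3$; one computes $K_{Z/X}=3\hat{E}_1+5E_2$, and the strict transform of a line meeting $B$ in two points is $g$-contracted with $K_{Z/X}\cdot\hat{\ell}=1>0$. So $K_Z$-non-negative contracted curves genuinely occur and the cone theorem gives no control over them; this is not a matter of transporting a ``standard projective fact'' to the analytic category, because the fact is not standard even projectively. (There is the further problem that the relative cone theorem is not an off-the-shelf tool for bimeromorphic projective morphisms of compact K\"ahler orbifolds, and that your symmetric application to $h\colon Z\to X'$ is doubly unjustified, since $Z$ is a blow-up tower over $X$ only.) The paper, following Huybrechts, avoids all of this: one never needs nefness against \emph{all} contracted curves, only the intersection of $D$ with the rational curves sweeping out each $E_i$ in the fibres of $\pi$ (resp.\ $\pi'$), which the blow-up structure provides, played off against the $\pi$-ample divisor $-\sum n_iE_i$. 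This is in effect a covering-curves form of the negativity lemma (the negativity lemma itself, unlike the cone theorem, is fine in the bimeromorphic analytic setting); replacing your cone-theorem step by this device repairs the argument.

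There is a second, smaller gap in your last paragraph: from ``$g^{-1}(x)$ is covered by rational curves'' and ``$h(g^{-1}(x))$ is positive-dimensional'' you cannot conclude that some rational curve in $g^{-1}(x)$ is not contracted by $h$. A fibre component could be of the form $\bP^1\times C$ with $C$ of positive genus, covered by the rational curves $\bP^1\times\{\mathrm{pt}\}$, all contracted by $h$, while $h$ still maps the component onto a curve; components of exactly this shape (projectivised normal bundles over positive-genus centres lying in earlier exceptional fibres) do occur in blow-up towers. What you need, and what is true for a composition of blow-ups with smooth centres, is that $g^{-1}(x)$ is rationally \emph{chain} connected: then two points of the fibre with distinct $h$-images are joined by a chain of rational curves inside the fibre, and some link of the chain is not contracted by $h$, producing the rational curve $C'\subset X'$ with $f^{*}(\alpha)\cdot C'>0$ that yields your contradiction. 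With that correction your concluding step, including the symmetry reduction, is sound, and it is essentially the same argument by which the paper shows that the contractions $\pi|_{E_i}$ and $\pi'|_{E_i}$ coincide, so that $f$ extends to an isomorphism.
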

\begin{proof}
The proof of this proposition is an adaptation of the proof of \cite[Proposition 2.1]{Huybrechts2003}.
By Hironaka's theorem (see for instance \cite{Bierstone-Milman}) there exists a sequence of blow-ups along smooth centers $\pi:Z\rightarrow X$ resolving $f$. In particular, we can find a non-zero effective divisor $\sum n_iE_i$ such that $-\sum n_iE_i$ is $\pi$-ample. We have the following commutative diagram:
$$\begin{array}{cccccc}
&&Z&&\\
&\pi'\swarrow&&\searrow\pi&\\
&X'~-&\stackrel{f}{-}&\to~X.&
\end{array}$$
By Lemma \ref{biracodim}, any exceptional divisors $E_i$ for $\pi$ is also an exceptional divisor for $\pi'$.

Let $\beta\in H^2(X,\R)$. Using Lemma \ref{orbiornotorbi} there exists $\gamma\in H^2(X',\R)$ such that:
$$\pi^*\beta=\pi'^*\gamma+\sum_ia_i\left[E_i\right].$$
Let $W\subset X$ and $W'\subset X'$ such that $\codim X\smallsetminus W\geq 2$, $\codim X'\smallsetminus W'\geq 2$ and the restriction $f:W\rightarrow W'$ is an isomorphism. We have a commutative diagram:
$$\xymatrix@C15pt{
 &\ar[dl]_{\pi}\pi^{-1}(W)\ar[dr]^{\pi'}&\\
 W\ar[rr]^{f}& & W'.
    }$$
 The commutativity of this diagram and \cite[Lemma 1.6]{Fujiki-1983} imply that $\gamma=f^*\beta$. That is:
 \begin{equation}
\pi^*\beta=\pi'^*(f^*(\beta))+\sum_ia_i\left[E_i\right].
 \label{HuyImportant}
 \end{equation}
 Taking $\alpha\in H^2(X,\R)$ as in the statement of the proposition,
exactly as Huybrechts in \cite[Proof of Proposition 2.1]{Huybrechts2003}, we can see that the $a_i$ vanish in this case: 
$$\pi^*\alpha=\pi'^*(f^*(\alpha)).$$
We conclude as Huybrechts proving that a rational curve $C$ is contracted by $\pi$ if and only if it is
contracted by $\pi'$  regarding the intersections $\pi^*\alpha\cdot C$ and $\pi'^*(f^*(\alpha))\cdot C$. 
Since all the exceptional divisors are covered by rational curves (our resolution is a sequence of blow-ups), all contractions $\pi_{|E_i}$ and $\pi'_{|E_i}$ coincide. So $f$ extends to an isomorphism.
\end{proof}
\begin{lemme}\label{isometry}
Let $f:X'\dashrightarrow X$ be a bimeromorphic map between two primitively symplectic orbifolds. By Lemma \ref{biracodim}, there exist two open sets $j:W\hookrightarrow X$ and $j':W'\hookrightarrow X'$ such that $\codim X\smallsetminus W\geq2$, $\codim X'\smallsetminus W'\geq2$ and the restriction $f:W'\rightarrow W$ is an isomorphism. Then the map induced by \cite[Lemma 1.6]{Fujiki-1983}:
$$f^*:H^2(X,\R)\stackrel{j^*}{\widetilde{\rightarrow}}H^2(W,\R)\simeq H^2(W',\R)\stackrel{\left(j'^*\right)^{-1}}{\widetilde{\rightarrow}}H^2(X',\R),$$
is an isometry with respect to the Beauville--Bogomolov form.
\end{lemme}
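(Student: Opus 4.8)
The plan is to compute directly from the defining formula of the Beauville--Bogomolov form recalled above. Write $\dim_{\C}X=\dim_{\C}X'=2n$ and let $q$ and $q'$ be the Beauville--Bogomolov quadratic forms of $X$ and $X'$. Since $q(\alpha)$ is a fixed polynomial in the top-degree cup products of $\alpha$ with a normalized holomorphic $2$-form $\sigma_X$ (with $\sigma_X^{n}\overline{\sigma}_X^{n}=1$) and its conjugate, multiplied by a scalar $\lambda_X$, it suffices to establish: (A) that $f^*$ preserves every top-degree cup-product integral of classes in $H^2$; and (B) that $f^*\sigma_X$ is a normalized holomorphic $2$-form on $X'$, hence a scalar multiple of $\sigma_{X'}$. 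Granting (A) and (B), the bracketed expressions defining $q(\alpha)$ and $q'(f^*\alpha)$ coincide, so that $q=q'\circ f^*$ up to the ratio $\lambda_X/\lambda_{X'}$, which is handled at the end.

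For (A) I would choose smooth closed representatives $\eta_1,\dots,\eta_{2n}$ of classes in $H^2(X,\R)$ and use that $X\smallsetminus W$ has real codimension $\geq 4$, hence measure zero, to write $\int_X\eta_1\wedge\cdots\wedge\eta_{2n}=\int_W\eta_1\wedge\cdots\wedge\eta_{2n}$; as $f\colon W'\to W$ is an orientation-preserving biholomorphism this equals $\int_{W'}(f|_{W'})^*\eta_1\wedge\cdots\wedge(f|_{W'})^*\eta_{2n}$, and, by the same measure-zero argument applied to $X'$, the latter equals $\int_{X'}f^*[\eta_1]\cdots f^*[\eta_{2n}]$. (Equivalently, one may pass to the resolution $\pi\colon Z\to X$, $\pi'\colon Z\to X'$ of Proposition \ref{mainprop} and use $\pi^*\beta=\pi'^*(f^*\beta)+\sum_i a_i[E_i]$ from (\ref{HuyImportant}), noting that the exceptional corrections do not affect the numbers pushed forward to $X$ and $X'$.) For (B), the form $f^*\sigma_X$ is holomorphic and nondegenerate on $W'$, and exactly as in the proof of Lemma \ref{biracodim} it extends across the codimension-$\geq 2$ set $X'\smallsetminus W'$ to a global nondegenerate holomorphic $2$-form, which by uniqueness is a multiple of $\sigma_{X'}$; applying (A) to $\sigma_X^{n}\overline{\sigma}_X^{n}$ gives $\int_{X'}(f^*\sigma_X)^{n}\overline{(f^*\sigma_X)}^{n}=1$, so $f^*\sigma_X$ is already normalized and may be used in place of $\sigma_{X'}$. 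Substituting into the formula yields $\lambda_X^{-1}q(\alpha)=\lambda_{X'}^{-1}q'(f^*\alpha)$ for all $\alpha$.

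Finally, the restriction maps $H^2(X,\R)\simeq H^2(W,\R)$ and $H^2(X',\R)\simeq H^2(W',\R)$ are defined over $\Z$ because the removed loci have complex codimension $\geq 2$ (the integral form of \cite[Lemma 1.6]{Fujiki-1983}), so $f^*$ is an isomorphism of the lattices $H^2(X,\Z)$ and $H^2(X',\Z)$. Under this identification $\lambda_X^{-1}q$ and $\lambda_{X'}^{-1}q'$ agree as real forms, while $q$ and $q'$ are both integral, primitive, and of the same signature $(3,b_2-3)$; two integral primitive forms that are positive real multiples of one another coincide, whence $\lambda_X=\lambda_{X'}$ and $q=q'\circ f^*$. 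I expect the genuine obstacle to lie in step (A): the forms $(f|_{W'})^*\eta_k$ need not extend smoothly across $X'\smallsetminus W'$ and $W'$ is noncompact, so identifying $\int_{W'}(f|_{W'})^*\eta_1\wedge\cdots$ with the cohomological product $\int_{X'}f^*[\eta_1]\cdots$ requires controlling a possible boundary contribution near the removed locus---for instance through a volume estimate on shrinking tubular neighborhoods of $X'\smallsetminus W'$, or, on the resolution, through the degeneracy of $\pi^*\sigma_X$ along the exceptional divisors.
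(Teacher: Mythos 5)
You correctly located the weak point of your own argument, and unfortunately it is fatal rather than technical. Your step (A) asserts that a bimeromorphic map which is an isomorphism in codimension one preserves \emph{all} top-degree cup products of $H^2$-classes, and your proposed proof of (A) (integrate over $W$, transport by the biholomorphism $f|_{W'}$, integrate over $W'$) uses nothing beyond this codimension-one structure. But in that generality the statement is false: for a flop of a $(-1,-1)$-curve $C$ in a threefold, the identical construction gives an isomorphism on $H^2$, yet triple intersections of divisor classes change by $(D\cdot C)^3$. So the boundary contributions you worry about near $X'\smallsetminus W'$ are genuinely nonzero in general, and no measure-zero/Stokes refinement can remove them without invoking the symplectic form. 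Your parenthetical fallback (pass to the resolution and use (\ref{HuyImportant})) does not rescue (A) either: expanding $\prod_j\bigl(\pi'^*f^*\beta_j+\sum_i a_{ij}[E_i]\bigr)$ produces terms with two or more exceptional factors, and pushforwards such as $\pi'_*([E_i]\cdot[E_j])$ are in general nonzero degree-four classes supported on the codimension-two centers (for a Mukai flop $\pi'_*([E]^2)$ is, up to sign, the class of the plane); these corrections cancel only in the aggregate, which is exactly what one is trying to prove. Statement (A) is in fact true for irreducible symplectic orbifolds, but the only access to it is the isometry lemma itself combined with equality of the Fujiki constants, the latter resting on deformation equivalence (Proposition \ref{mainprop2}), which comes later in the paper; deducing the lemma from (A) therefore runs the logic backwards.

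The paper's proof avoids both problems by exploiting the specific shape of the Beauville--Bogomolov integrand on the \emph{compact} resolution $Z$ of Proposition \ref{mainprop}. After normalization one has $\pi^*\sigma_X=\pi'^*\sigma_{X'}$ on $Z$ (this is essentially your step (B), which is fine), so the factor $(\sigma\overline{\sigma})^{n-1}$ is simultaneously a $\pi$-pullback and a $\pi'$-pullback. Using (\ref{HuyImportant}), the difference of the quadratic terms becomes
$$\int_Z\pi^*\bigl[(\sigma_X\overline{\sigma}_X)^{n-1}\bigr]\Bigl((\pi^*\beta)^2-(\pi'^*f^*\beta)^2\Bigr)=\sum_i a_i\int_Z\pi^*\bigl[(\sigma_X\overline{\sigma}_X)^{n-1}\bigr]\bigl(\pi^*\beta+\pi'^*f^*\beta\bigr)\cdot[E_i],$$
and every summand contains exactly one exceptional class and otherwise only pullbacks along a \emph{single} projection: the $\pi^*\beta$-terms combine into $\pi^*\bigl[(\sigma_X\overline{\sigma}_X)^{n-1}\beta\bigr]\cdot[E_i]$, the $\pi'^*f^*\beta$-terms into $\pi'^*\bigl[(\sigma_{X'}\overline{\sigma}_{X'})^{n-1}f^*\beta\bigr]\cdot[E_i]$. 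The projection formula together with $\pi_*[E_i]=\pi'_*[E_i]=0$ (the $E_i$ are exceptional for both maps by Lemma \ref{biracodim}) kills each term separately; the terms linear in $\beta$, of the form $\sigma^{n-1}\overline{\sigma}^{n}\beta$, are handled the same way. Thus one never meets products of two exceptional classes, which is precisely where your version breaks. Your concluding step (integrality and primitivity force the normalizing constants $\lambda_X$, $\lambda_{X'}$ to agree) is correct and completes the proof once the unnormalized forms are matched in this manner.
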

\begin{proof}
The same arguments as in \cite[(I.6.2) Proposition]{OG-1997} yield the proof.
\end{proof}
\begin{prop}\label{mainprop2}
Let $X$ and $X'$ be irreducible symplectic orbifolds and $f:X'\dashrightarrow X$ a bimeromorphism. Then there exist two flat proper families $\mathcal{X}\rightarrow S$ and $\mathcal{X}'\rightarrow S$ over a one-dimensional disk $S$ with the following  properties:
\begin{itemize}
\item[(i)]
The special fibers are $\mathcal{X}_0\simeq X$ and $\mathcal{X}'_0\simeq X'$.
\item[(ii)]
There exists a bimeromorphism $F:\mathcal{X}'\dashrightarrow\mathcal{X}$ which is an isomorphism over $S\smallsetminus \left\{0\right\}$, i.e. $F:\mathcal{X}'_{|S\smallsetminus \left\{0\right\}}\iso\mathcal{X}_{|S\smallsetminus \left\{0\right\}}$, and which coincides with $f$ on the special fiber, i.e. $F_0=f$.
\end{itemize}
\end{prop}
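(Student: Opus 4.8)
The plan is to deduce the statement from Proposition \ref{oldprop} by choosing the auxiliary class appearing there in a way adapted to the given bimeromorphism $f$, and then to recognize the resulting special fibre as $X'$ by means of Proposition \ref{mainprop}.

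First I would record that, by Lemma \ref{isometry}, the bimeromorphism $f$ induces a Hodge isometry $f^*\colon H^2(X,\R)\to H^2(X',\R)$, which is defined over $\Z$; since $f$ is an isomorphism in codimension $1$ (Lemma \ref{biracodim}) it preserves the distinguished components of the positive cones, so $f^*(\mathcal{C}_X)=\mathcal{C}_{X'}$. Now choose a K\"ahler class $\omega'\in\mathcal{K}_{X'}$ and set $\alpha:=(f^*)^{-1}(\omega')\in\mathcal{C}_X$. Because $(f^*)^{-1}\mathcal{K}_{X'}$ is a nonempty open subcone of $\mathcal{C}_X$, while the ``very general'' condition of Proposition \ref{oldprop} only excludes countably many nowhere dense closed subsets, I may take $\omega'$ so that $\alpha$ is very general in the sense of that proposition (equivalently, $\omega'$ is very general, as $f^*$ is integral). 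Applying Proposition \ref{oldprop} to $(X,\alpha)$ produces families $\mathcal{X}\to S$, $\mathcal{X}'\to S$ and a bimeromorphism $F\colon\mathcal{X}'\dashrightarrow\mathcal{X}$, an isomorphism over $S\smallsetminus\{0\}$, with $\mathcal{X}_0\simeq X$ and with $F^*\alpha$ a K\"ahler class on $Y:=\mathcal{X}'_0$. This already yields everything in the statement except the identifications $\mathcal{X}'_0\simeq X'$ and $F_0=f$.

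To obtain these, consider the bimeromorphism $g:=f^{-1}\circ F_0\colon Y\dashrightarrow X'$ between irreducible symplectic orbifolds and apply Proposition \ref{mainprop} to it with the class $\omega'\in H^2(X',\R)$. On the target side, $\omega'$ is K\"ahler on $X'$, hence $\omega'\cdot[C]>0$ for every rational curve $C\subset X'$. On the source side, the pullback is $g^*\omega'=F_0^*(f^*)^{-1}f^*\alpha=F_0^*\alpha=F^*\alpha$, the K\"ahler class on $Y$ furnished by Proposition \ref{oldprop}, so $g^*\omega'\cdot[C']>0$ for every rational curve $C'\subset Y$. Proposition \ref{mainprop} therefore shows that $g$ extends to an isomorphism $\bar g\colon Y\xrightarrow{\ \sim\ }X'$. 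Relabelling the special fibre of $\mathcal{X}'\to S$ through $\bar g$ gives a family with $\mathcal{X}'_0\simeq X'$; and since $\bar g=g=f^{-1}\circ F_0$, after this relabelling the restriction of $F$ to the special fibre becomes $F_0\circ\bar g^{-1}=f$, as required.

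The genuine point of the argument is the first step: arranging that $f^*\alpha$ lands inside the K\"ahler cone $\mathcal{K}_{X'}$ (and not merely in the positive cone $\mathcal{C}_{X'}$), which is exactly what lets the two positivity hypotheses of Proposition \ref{mainprop} be met simultaneously, while at the same time keeping $\alpha$ very general enough for Proposition \ref{oldprop}. This is possible precisely because the relevant birational K\"ahler chamber $(f^*)^{-1}\mathcal{K}_{X'}$ is open in $\mathcal{C}_X$; once $\alpha$ is chosen there, the remaining verifications (the cohomological identity for $g^*\omega'$ and the bookkeeping giving $F_0=f$) are formal.
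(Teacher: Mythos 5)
Your skeleton is the same as the one the paper intends (it is Huybrechts' proof of Theorem 2.5, which the paper cites): take $\alpha$ very general in the chamber $(f^*)^{-1}(\mathcal{K}_{X'})\subseteq\mathcal{C}_X$, apply Proposition \ref{oldprop}, then apply Proposition \ref{mainprop} to the composite $g=f^{-1}\circ F_0$ with the class $\omega'$, and relabel. However, there is a genuine gap at the central equality $g^*\omega'=F_0^*\alpha=F^*\alpha$. The second equality is not a notational triviality: by the paper's explicit convention stated right after Proposition \ref{oldprop}, $F^*\alpha:=p'_*(\Gamma\cdot p^*\alpha)$, where $\Gamma=\Ima(\mathcal{Z}_0\rightarrow X\times \mathcal{X}'_0)$ is the full limit cycle. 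This cycle decomposes as $\Gamma=Z+\sum_i Y_i$ (cf.\ \cite[Remark 3.24]{Menet-2020}), where only $Z$ is the graph of the induced bimeromorphism $F_0$ of special fibres, so that $F^*\alpha=F_0^*\alpha+\sum_i[Y_i]_*\alpha$, and the correction terms $[Y_i]_*\alpha$ are real multiples of divisor classes supported on $\pi'(Y_i)$ whose coefficients you do not control (they are cup products of $\alpha$ with curve classes in $X$, and $\alpha$ is not K\"ahler on $X$). Consequently, K\"ahlerness of $F^*\alpha$ on $Y$ does not formally give positivity of $F_0^*\alpha$ on the rational curves of $Y$, which is exactly what your application of Proposition \ref{mainprop} requires.

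This is not a removable formality; it is the heart of Huybrechts' argument. Already for K3 surfaces the two pullbacks genuinely differ when $\alpha$ sits in the wrong chamber: if $\alpha\in\mathcal{C}_X$ is very general with $(\alpha,C)_q<0$ for a $(-2)$-curve $C$, the limit cycle underlying Proposition \ref{oldprop} is $\Delta+C\times C$, so $F^*=s_C$ (the reflection in $C$) while $F_0^*=\id$; then $F^*\alpha$ is K\"ahler but $F_0^*\alpha=\alpha$ is not. In your situation the choice $f^*\alpha=\omega'\in\mathcal{K}_{X'}$ does make the equality true, but proving it --- equivalently, proving that each $[Y_i]_*$ acts trivially on $H^2$, i.e.\ that $\pi'(Y_i)$ has codimension at least $2$ or its ruling curves are contracted --- is precisely the nontrivial step of Huybrechts' proof that the paper re-uses verbatim in the proofs of Theorem \ref{main}(ii) and of the generalized Boucksom criterion. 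Filling it requires: that both $F^*$ (as a limit of parallel transport operators) and $F_0^*$ (by Lemma \ref{isometry}) are isometries; that $\alpha-(F_0^*)^{-1}(F^*\alpha)$ is an \emph{effective} divisor class, which uses that $F^*\alpha$ is K\"ahler on $Y$; and a sign computation pairing this class against the two K\"ahler classes $\omega'$ on $X'$ and $F^*\alpha$ on $Y$ to force it to vanish --- this is where the hypothesis $f^*\alpha\in\mathcal{K}_{X'}$ actually enters. So the "formal bookkeeping" you defer to is exactly the missing core of the proof.
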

\begin{proof}
Using Lemmas \ref{orbiornotorbi} and \ref{biracodim} and Propositions \ref{mainprop} and \ref{oldprop}, this result can be proved exactly as Huybrechts did in \cite[proof of Theorem 2.5]{Huybrechts2003}.
\end{proof}
\begin{proof}[Proof of Theorem \ref{main}]
Statement (i) is given by Proposition \ref{mainprop2} and \cite[Corollary 5.11]{Menet-2020}. 
In the following, we show statement (ii).  Clearly, if there exists and isomorphism $\tilde f\colon X \to X'$, then
$\tilde f_*$
maps some Kähler class to a Kähler class. Hence, we only need to prove the reverse implication.

Let $f:H^2(X,\Z)\rightarrow H^2(X',\Z)$ be a parallel transport
which is a Hodge isometry. 
By Theorem \ref{mainGTTO} and Proposition \ref{mainprop2}, there exist two flat proper families $\mathcal{X}\rightarrow S$ and $\mathcal{X}'\rightarrow S$ over a one-dimensional disk $S$ such that $\mathcal{X}_0\simeq X$ and $\mathcal{X}'_0\simeq X'$ and $F:\mathcal{X}'\dashrightarrow\mathcal{X}$ which is an isomorphism over $S\smallsetminus \left\{0\right\}$.
Let $\mathcal{X}\leftarrow\mathcal{Z}\rightarrow\mathcal{X}'$ be a resolution of $F$ obtained by a sequence of
blow-ups in smooth loci (Hironaka's theorem). Let $\mathcal{Z}_0$ be the fiber of $\mathcal{Z}\rightarrow S$
over $0$, we consider the cycle $\Gamma:=\Image (\mathcal{Z}_0\rightarrow X\times X')$ which decomposes as
$\Gamma=Z+\sum Y_i$. As explained in \cite[Remark 3.24]{Menet-2020}, we know that $Z$ is the graph of a
bimeromorphism between $X$ and $X'$. Moreover $\left[\Gamma\right]_*=f$, where
$\left[\Gamma\right]_*(x):=\pi'_*([\Gamma]\cdot \pi^*(x))$ for all $x\in H^2(X,R)$. Assume that there exists $\alpha\in H^{1,1}(X,\bR)$ a K\"ahler class such that $\beta=f(\alpha)=\left[\Gamma\right]_*(\alpha)$ is a K\"ahler class too. We are now in the same situation as the one in the proof of \cite[Theorem 2.5]{Huybrechts2003} and with the same argument, we prove that $\pi'(Y_i)$ has codimension at least 2 for all $i$. Hence $f=\left[\Gamma\right]_*=\left[Z\right]_*$. Finally, we conclude the proof using Proposition \ref{mainprop}.
\end{proof}
\subsection{Applications to the singularities of an irreducible symplectic orbifold}\label{applising}
\subsubsection*{Two bimeromorphic irreducible symplectic orbifolds have the same singularities}
We note the following corollary of Proposition \ref{mainprop2}. 
\begin{cor}
Let $X$ and $X'$ be two bimeromorphic irreducible symplectic orbifolds. Then, $X$ and $X'$ have the same singularities.
\end{cor}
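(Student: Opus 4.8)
The plan is to read the statement off directly from Proposition \ref{mainprop2}, combined with the local triviality of deformations of irreducible symplectic orbifolds. First I would apply Proposition \ref{mainprop2} to a bimeromorphism $f\colon X'\dashrightarrow X$: this produces smooth proper families $\mathcal{X}\rightarrow S$ and $\mathcal{X}'\rightarrow S$ over a one-dimensional disk $S$ with $\mathcal{X}_0\simeq X$ and $\mathcal{X}'_0\simeq X'$, together with a bimeromorphism $F\colon\mathcal{X}'\dashrightarrow\mathcal{X}$ restricting to an isomorphism over $S\smallsetminus\{0\}$. In particular, for any fixed $s\neq 0$ the induced map $F_s\colon\mathcal{X}'_s\rightarrow\mathcal{X}_s$ is an isomorphism of orbifolds, so these two fibers have the same singularities (with the singular loci identified, and the corresponding analytic germs isomorphic, via $F_s$).

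The second and decisive step is to transport the singularity type from the general fiber back to the special fiber on each side separately. Here I would invoke the fact that a deformation of irreducible symplectic orbifolds is locally trivial (\cite[Proposition 3.10]{Menet-2020}, as recalled in the remark following Definition \ref{transp}). Local triviality means that every point of the central fiber has a neighborhood $U$ in the total space with $U\simeq (U\cap\mathcal{X}_0)\times S$ as spaces over $S$; consequently the analytic isomorphism type of the germ of $\mathcal{X}$ at a point of $\Sing\mathcal{X}_s$ does not depend on $s$, and $\Sing\mathcal{X}_s$ is matched fiberwise with $\Sing\mathcal{X}_0=\Sing X$. Applying this to both families yields that $X=\mathcal{X}_0$ has the same singularities as $\mathcal{X}_s$, and $X'=\mathcal{X}'_0$ has the same singularities as $\mathcal{X}'_s$, for every $s\neq 0$.

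Chaining these identifications finishes the argument: for a chosen $s\neq 0$, the singularities of $X$ agree with those of $\mathcal{X}_s$, which agree with those of $\mathcal{X}'_s$ via $F_s$, which in turn agree with those of $X'$. The main obstacle is not the logical skeleton but making precise what \emph{the same singularities} should mean and checking that local triviality really delivers it at that level. In particular one must be careful that the families appearing in Proposition \ref{mainprop2}, though phrased as ``smooth proper families'', are families of (singular) orbifolds, and that it is exactly the local triviality of such deformations — rather than any smoothness of the total space — that keeps the analytic type of each singular germ constant along $S$. Once this is fixed, the conclusion holds at the level of the local analytic models at the singular points, which is the natural meaning of the statement.
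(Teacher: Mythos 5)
Your proof is correct and follows essentially the same route as the paper: apply Proposition \ref{mainprop2} to deformation-connect $X$ and $X'$, then use the fact that singularities of irreducible symplectic orbifolds cannot change along such deformations. The only cosmetic difference is the reference invoked for that last step — you cite the local triviality of deformations (\cite[Proposition 3.10]{Menet-2020}), while the paper cites the rigidity of quotient singularities of codimension $\geq 3$ under deformation (\cite[Lemma 3.3]{Fujiki-1983}), which is the result underlying that local triviality; both deliver the same conclusion.
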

\begin{proof}
We have seen with Proposition \ref{mainprop2} that $X$ and $X'$ are deformation equivalent. However,  quotient singularities of codimension $\geq3$ are rigid under deformation (see \cite[Lemma 3.3]{Fujiki-1983}).
\end{proof}
\subsubsection*{Uniqueness of the crepant resolution in codimension 2}
The following corollary of Proposition \ref{mainprop2} can be seen as a partial answer to \cite[Conjecture 2]{Fu-Namikawa}. 
By \cite{BCHM}, we know that symplectic singularities in codimension 2 have a partial crepant resolution. Is this partial crepant resolution unique? 
\begin{defi}
Let $X$ be an orbifold. Let $\Sing_2 X$ be the union of the irreducible components of $\Sing X$ of codimension 2. A partial resolution in codimension 2, $r:\widetilde{X}\rightarrow X$ of $X$, is a proper bimeromorphic map such that:
\begin{itemize}
\item[(i)]
the restriction $r:r^{-1}\left(X\smallsetminus \Sing_2 X\right)\rightarrow X\smallsetminus \Sing_2 X$ is an isomorphism;
\item[(ii)]
$\codim \Sing \widetilde{X} \geq 3$.
\end{itemize}
\end{defi}
\begin{cor}\label{crepant}
Let $X$ and $X'$ be two irreducible symplectic orbifolds. We assume that $X$ and $X'$ are two partial
resolutions in codimension 2 of the same orbifold $Y$. Then $X$ and $X'$ are deformation equivalent.
\end{cor}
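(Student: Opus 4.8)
The plan is to reduce the statement to Proposition \ref{mainprop2} by exhibiting $X$ and $X'$ as bimeromorphic. First I would record that, by condition (i) in the definition of a partial resolution in codimension $2$, both structure maps $r\colon X\to Y$ and $r'\colon X'\to Y$ restrict to isomorphisms over the dense open subset $Y\smallsetminus \Sing_2 Y$. Consequently the composition $f:=r^{-1}\circ r'\colon X'\dashrightarrow X$ is an isomorphism between the dense open subsets $r'^{-1}(Y\smallsetminus \Sing_2 Y)\subseteq X'$ and $r^{-1}(Y\smallsetminus \Sing_2 Y)\subseteq X$; that is, $f$ is a bimeromorphism between the two irreducible symplectic orbifolds $X'$ and $X$, with inverse $r'^{-1}\circ r$.

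Next I would feed this bimeromorphism into Proposition \ref{mainprop2}. This produces two smooth proper families $\mathcal{X}\to S$ and $\mathcal{X}'\to S$ over a one-dimensional disk $S$, with special fibers $\mathcal{X}_0\simeq X$ and $\mathcal{X}'_0\simeq X'$, together with a bimeromorphism $F\colon \mathcal{X}'\dashrightarrow\mathcal{X}$ restricting to an isomorphism $\mathcal{X}'_{|S\smallsetminus \{0\}}\simeq\mathcal{X}_{|S\smallsetminus \{0\}}$. In particular $\mathcal{X}_t\simeq\mathcal{X}'_t$ for every $t\in S\smallsetminus\{0\}$.

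Finally I would conclude exactly as in the preceding corollary on singularities: the orbifold $X=\mathcal{X}_0$ is a deformation of $\mathcal{X}_t$, which is isomorphic to $\mathcal{X}'_t$, which is in turn a deformation of $\mathcal{X}'_0\simeq X'$. Chaining these deformations shows that $X$ and $X'$ are deformation equivalent, as claimed.

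I do not expect any substantial obstacle here, since all of the geometric content has already been carried out in Proposition \ref{mainprop2}; the entire argument is the observation that the hypothesis "common partial resolution in codimension $2$" forces $X$ and $X'$ to be bimeromorphic, after which deformation equivalence is automatic. The only minor point requiring a word of justification is that the loci $r^{-1}(Y\smallsetminus \Sing_2 Y)$ and $r'^{-1}(Y\smallsetminus \Sing_2 Y)$ are dense and open, so that $f$ is genuinely a bimeromorphism; this is immediate because $r$ and $r'$ are proper bimeromorphic maps, so their exceptional loci are nowhere dense.
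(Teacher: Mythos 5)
Your proposal is correct and follows essentially the same route as the paper: the paper's proof simply observes that $X$ and $X'$ are bimeromorphic (via the common partial resolution of $Y$) and then invokes Proposition \ref{mainprop2} to conclude deformation equivalence. Your additional details --- constructing the bimeromorphism $f = r^{-1}\circ r'$ explicitly over $Y\smallsetminus \Sing_2 Y$ and chaining the deformations through the isomorphic fibers $\mathcal{X}_t \simeq \mathcal{X}'_t$ --- are exactly the points the paper leaves implicit.
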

\begin{proof}
Indeed, $X$ and $X'$ are bimeromorphic. Then Proposition \ref{mainprop2} concludes the proof.
\end{proof}
The corollary above may help to classify irreducible symplectic orbifolds in dimension 4. We provide an example below.
\begin{ex}
In \cite[Section 13]{Fujiki-1983}, Fujiki constructs the following symplectic orbifold. He considers $S$ a K3 surface and $i$ a symplectic involution on $S$.
Let $s_2$ be the reflection on $S\times S$ acting by $s_2(x,y)=(y,x)$.
Let $G=\left\langle s_2,i\right\rangle$ be the automorphism group on $S\times S$ with $i(x,y)=(i(x),i(y))$. Fujiki defined $S(C_2)^{[2]}$ a crepant resolution in codimension 2 of $S\times S/G$. Fujiki shows that $S(C_2)^{[2]}$ is a primitively symplectic orbifold.  
\end{ex}
We recall that the Nikulin orbifolds are defined in Definition \ref{Nikulin}.
\begin{prop}\label{FujikiM'}
The orbifold $S(C_2)^{[2]}$ is symplectic irreducible and of Nikulin type.
\end{prop}
\begin{proof}
Let $M'$ be the Nikulin orbifold obtained via the couple $(S^{[2]},i^{[2]})$, with $i^{[2]}$ the involution induced by $i$ on $S^{[2]}$.
It is enough to show that $S(C_2)^{[2]}$ is symplectic irreducible. Indeed $M'$ and $S(C_2)^{[2]}$ are both partial crepant resolutions in codimension 2 of $S\times S/G$ and so Corollary \ref{crepant} applies. 

Necessarily, $S(C_2)^{[2]}$ and $M'$ are bimeromorphic.
 Indeed, let $\Delta:= \left\{\left.\left\{x,x\right\}\in S\times S/G\ \right|\ x\in S\right\}$ and
 $\Sigma:=\left\{\left.\left\{x,i(x)\right\}\in S\times S/G\ \right|\ x\in S\right\}$. Let $U:=S\times
 S/G\smallsetminus (\Delta\cup \Sigma)$. The set $U$ can be seen as an open set in $S(C_2)^{[2]}$ and in
 $M'$. This gives rise to a natural bimeromorphism $f:S(C_2)^{[2]}\dashrightarrow M'$ (extending $\id_U$).

 By Lemma \ref{biracodim}, $f$ extends to an isomorphism from an open set $U'\subset S(C_2)^{[2]}$ to an
 open set $V'\subset M'$ such that $\codim S(C_2)^{[2]}\smallsetminus U'\geq 2$ and $\codim M'\smallsetminus
 V'\geq 2$ with $U\subset U'$ and $U\subset V'$. Moreover, $U$ contains all the singularities which are given by $\left\{\left.\left\{x_i,x_j\right\}\right|\ i<j\right\}$, where $(x_i)_{i\in\left\{1,...,8\right\}}$ are the fixed points of $i$ on $S$. It follows that $\pi_1(S(C_2)^{[2]}_{reg})=\pi_1(U'_{reg})=\pi_1(V'_{reg})=\pi_1(M'_{reg})=0$, where the index $reg$ means the regular subset. 
\end{proof}
\begin{rmk}
Actually, it can be deduced from the theory of wall divisors that $M'$ and $S(C_2)^{[2]}$ are isomorphic at least in some cases. 
If $S$ is general in the sense that $S$ does not contain any curve, then $M'$ and $S(C_2)^{[2]}$ are isomorphic according to Proposition \ref{isometry}, Corollary \ref{cor:desrK} and \cite[Section 4.1]{Ulrike2}.
\end{rmk}
\begin{rmk}
Note that this proposition is in contradiction with the Euler characteristic computed in \cite[Proposition 5.1, Corollary 5.7]{Marku-Tikho} and the one suggested in \cite[Remark 13.2 (4)]{Fujiki-1983}. However, we can prove that the Euler characteristic of $S(C_2)^{[2]}$ and $M'$ is actually 212 using two different methods. Indeed, it is proved in \cite[Proposition 2.40]{Menet-2015} using the link between the invariant cohomology and the cohomology of the quotient that $\chi(M')=212$. Moreover, in \cite[Proposition 3.6]{Fu-Menet}, using Blach--Riemann--Roch theorem \cite[Theorem 3.5 and 3.17]{Blache}, we provide an expression such that the Euler characteristic of a 4-dimensional primitively symplectic orbifold $X$ depends only on the second Betti number, the third Betti number and the singularities: 
$$b_4(X)+b_3(X)-10b_2(X)=46+s,$$
where $s$ only depends of the singularities.
That is:
\begin{equation}
\chi(X)=(48+s)+12b_2(X)-3b_3(X).
\label{Euler}
\end{equation}
More precisely $$s=\sum_{x\in\Sing X}s_x,$$
where $s_x$ only depends of the analytic type of the singularity $x$.
It was shown in \cite[(16)]{Fu-Menet}, that $s_x=-1$ if $x$ is a singularity analytically equivalent to $\C^4/-\id$.
By \cite[Propositions 2.8 and 2.40]{Menet-2015}, \cite[Corollary 5.7]{Marku-Tikho}, \cite[Theorem 13.1, Table 1, I.2]{Fujiki-1983}, $b_2(M')=b_2(S(C_2)^{[2]})=16$, $b_3(M')=b_3(S(C_2)^{[2]})=0$
and $M'$, $S(C_2)^{[2]}$ have 28 singularities analytically equivalent to $\C^4/-\id$.
Hence by (\ref{Euler}):
$$\chi(M')=\chi(S(C_2)^{[2]})=212.$$
\end{rmk}
\subsection{Application: example of non-separated orbifolds in their moduli space}\label{nonseparated}
In \cite{Marku-Tikho}, Markushevich and Tikhomirov provide a primitively symplectic orbifold related by a Mukai flop to a Nikulin orbifold (see Definition \ref{Nikulin}). 
The construction starts with a K3 surface $S$ endowed with an anti-symplectic involution $i$ such that the quotient $X/i$ is a del Pezzo surface of degree 2. We endow $S$ with a polarization $H$ which is a pull-back of the anti-canonical bundle of $X/i$. In particular, the divisor $H$ is very ample and provides an embedding of $S$ in $\mathbb{P}^3$ (see \cite[Lemma 1.1]{Marku-Tikho}).
Then we consider $\mathscr{M}=M_{S}^{H,ss}(0,H,-2)$, the moduli space of $H$-semi-stable sheaves on $S$ with Mukai vector $(0,H,-2)$.
Then the Markushevich--Tikhomirov orbifold $\mathcal{P}$ is constructed as a connected component of the fixed
locus of the involution $i^*\circ\tau$ where $\tau$ is a generalization of the dual map adapted to
$\mathscr{M}$ (see  \cite[Section 3]{Marku-Tikho} for more details).

Restricting to the case where $S\inj \bP^3$ does not contain any line, one can consider the Beauville involution $\iota$ on $S^{[2]}$ (see \cite[p.21]{Beauville1982}) which is an antisymplectic
involution. The composition $i^{[2]}\circ \iota$ is therefore symplectic. Hence, we can consider $M'$ the partial resolution in codimension 2 of $S^{[2]}/i^{[2]}\circ\iota$ (see Example \ref{exem}).

\begin{prop}\label{rappel}
The varieties $\mathcal{P}$ and $M'$ are irreducible symplectic orbifolds of dimension 4 with only 28 singular points analytically equivalent to $(\mathbb{C}^4 / \left\{\pm1\right\},$ $0)$. Moreover they are related by a Mukai flop. In particular the orbifold $\mathcal{P}$ is of Nikulin type.
\end{prop}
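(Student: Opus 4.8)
The plan is to check separately the three assertions of the statement; the last one then comes for free. Indeed, a Mukai flop is in particular a bimeromorphic map, so once $\mathcal{P}$ and $M'$ are both known to be irreducible symplectic orbifolds, Proposition \ref{mainprop2} produces two smooth proper families over a disk, specialising to $\mathcal{P}$ and to $M'$ and isomorphic over the punctured disk; this is exactly what it means for them to be deformation equivalent (compare the use of Proposition \ref{mainprop2} in Corollary \ref{crepant}). Hence the real content is: (a) both $\mathcal{P}$ and $M'$ are irreducible symplectic of dimension $4$; (b) the local form of their $28$ singular points; and (c) the Mukai flop relating them.

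First I would treat $M'$, which by construction is the partial resolution in codimension $2$ of $S^{[2]}/(i^{[2]}\circ\iota)$. Since $H$ is the pull-back of the anticanonical bundle of the del Pezzo surface $S/i$, it satisfies $i^*H=H$, so the involution $i$ extends to a projective linear involution of $\mathbb{P}^3$ and hence sends lines to lines; therefore $i^{[2]}$ commutes with the Beauville involution $\iota$ (defined through residual intersection with lines), and as both $i^{[2]}$ and $\iota$ are antisymplectic, the composition $\phi:=i^{[2]}\circ\iota$ is a symplectic involution of the hyperk\"ahler fourfold $S^{[2]}$. This places $M'$ in the setting of Example \ref{exem}: by \cite[Theorem 4.1]{Mongardi-2012} the fixed locus of $\phi$ consists of a K3 surface $\Sigma$ together with $28$ isolated points; the partial resolution blows up the image of $\Sigma$ and resolves the transversal $A_1$-singularities in codimension $2$, whereas at each isolated fixed point $\phi$ acts on the tangent space as $-\mathrm{id}$ and so contributes a singularity analytically equivalent to $(\mathbb{C}^4/\{\pm1\},0)$. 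Together with \cite[Proposition 3.8]{Menet-2020} this shows that $M'$ is an irreducible symplectic orbifold of dimension $4$ with exactly $28$ such singular points.

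Next I would transfer these properties to $\mathcal{P}$ by means of the Mukai flop. Exploiting that $S\subset\mathbb{P}^3$ contains no line, I would make explicit (following \cite{Marku-Tikho}) the birational dictionary identifying the dual map $\tau$ with the Beauville involution and $i^*$ with $i^{[2]}$, so that the component $\mathcal{P}$ of the fixed locus of $i^*\circ\tau$ is bimeromorphic to $M'$ via a Mukai flop along a $\mathbb{P}^2$. By Lemma \ref{biracodim} this bimeromorphism restricts to an isomorphism between open subsets of $\mathcal{P}$ and $M'$ whose complements have codimension at least $2$. As the flopped $\mathbb{P}^2$ lies in the smooth locus, the $28$ singular points lie inside the isomorphism locus and are shared, so $\mathcal{P}$ carries exactly the same $28$ points of type $(\mathbb{C}^4/\{\pm1\},0)$; and, arguing as in the proof of Proposition \ref{FujikiM'}, deleting the codimension-$2$ complements leaves the fundamental group of the regular locus unchanged, whence $\pi_1(\mathcal{P}_{reg})=\pi_1(M'_{reg})=0$. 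Combined with the primitive symplecticity coming from the construction of \cite{Marku-Tikho}, this proves that $\mathcal{P}$ too is an irreducible symplectic orbifold of dimension $4$ with the stated singularities.

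Having established (a), (b), (c), the deformation equivalence follows from Proposition \ref{mainprop2} as explained above. I expect the genuine obstacle to be step (c): matching the moduli-of-sheaves incarnation of $\mathcal{P}$ with the quotient incarnation of $M'$ and checking that the indeterminacy of the resulting bimeromorphism is exactly a Mukai flop along a $\mathbb{P}^2$ contained in the smooth locus, so that both the analytic type of the $28$ singular points and the simple connectivity of the regular locus really do transfer. The hypothesis that $S$ contains no line is precisely what makes this birational identification clean, and verifying it carefully is where the geometric work lies.
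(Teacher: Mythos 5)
Your proof follows essentially the same route as the paper's: $M'$ is handled via Example \ref{exem} (Mongardi's fixed-locus theorem), the primitive symplecticity of $\mathcal{P}$ and the Mukai flop are imported from Markushevich--Tikhomirov, simple connectivity of $\mathcal{P}_{reg}$ is transferred through the flop because its indeterminacy locus avoids the singular points, and Proposition \ref{mainprop2} gives the deformation equivalence. The additional details you supply (commutativity of $i^{[2]}$ and $\iota$, the local $-\mathrm{id}$ analysis at the $28$ fixed points, the explicit use of Lemma \ref{biracodim} as in the proof of Proposition \ref{FujikiM'}) are elaborations of steps the paper delegates to citations, not a different argument.
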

\begin{proof}
By Example \ref{exem} we know the properties of $M'$. By \cite[Theorem 3.4 and Corollary 5.7]{Marku-Tikho} we know that $\mathcal{P}$ is a primitively symplectic orbifold related to $M'$ by a Mukai flop. As explained in the proof of \cite[Lemma 5.3]{Marku-Tikho}, the indeterminacy locus of this Mukai flop does not contain any singularities of $\mathcal{P}$.
It follows that $\pi_1(\mathcal{P}_{reg})=\pi_1(M'_{reg})=0$. That is $\mathcal{P}$ is an irreducible symplectic orbifold.
We conclude with Proposition \ref{mainprop2} that $M'$ and $\mathcal{P}$ are deformation equivalent.
\end{proof}
Therefore, it is natural to ask whether $M'$ and $\mathcal{P}$ are isomorphic. We will answer to this question in what follows.

Assume that  $(S,i)$ is a very general K3 surface as in the previous paragraph, i.e.~satisfying the additional
condition $\Pic S=H^2(S,\Z)^i$.
Denote by $j:H^2(S,\Z)\rightarrow H^2(S^{[2]},\Z)$ the natural injection constructed in \cite[Proposition 6]{Beauville1983}.
We have the following description of the action of the Beauville involution.
\begin{prop}[\cite{OG-2005}, Proposition 4.1]\label{Ogradydy}
 The involution $\iota^*$ restricted to $\Pic S^{[2]}$ is the reflection in the span of $\theta:=j(H)-\delta$, 
 where $\delta$ is half the class of the diagonal.
\end{prop}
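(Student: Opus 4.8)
The plan is to read off $\iota^\ast$ from the projective geometry of the secant--line residuation that defines it. For a general $\xi=\{x,y\}\in S^{[2]}$ the secant line $\overline{xy}\subset\mathbb{P}^3$ cuts the quartic $S$ in a length-$4$ divisor $x+y+x'+y'$, and $\iota(\xi)=\{x',y'\}$; since $\iota(\xi)$ lies on the \emph{same} secant line, the rational map
$$g\colon S^{[2]}\dashrightarrow\Gr(1,3),\qquad \{x,y\}\longmapsto\overline{xy},$$
satisfies $g\circ\iota=g$ (this uses that $S$ contains no lines, so $g$ and the residuation are genuinely defined). Consequently $g^\ast\mathcal{O}_{\Gr}(1)$ is $\iota^\ast$-invariant, and the first step is to identify this class with $\theta=j(H)-\delta$. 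Note $q(\theta)=q(j(H))+q(\delta)=4-2=2$.

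To identify $g^\ast\mathcal{O}_{\Gr}(1)$ I would pair it with test curves whose classes pair non-degenerately with $\Pic S^{[2]}=j(\Pic S)\oplus\mathbb{Z}\delta$: the fibre $\ell_0$ of the Hilbert--Chow morphism $S^{[2]}\to S^{(2)}$ over a general point $2p$ of the diagonal (for which $\ell_0\cdot\delta=-1$ and $\ell_0\cdot j(D)=0$), and, for fixed general $p\in S$ and a class $D$ on $S$, the curve $C_{p,D}=\{\{p,z\}\mid z\in D\}$ (for which $C_{p,D}\cdot j(D')=D\cdot D'$ and $C_{p,D}\cdot\delta=0$). A direct computation shows $g$ sends $\ell_0$ isomorphically onto the pencil of tangent lines to $S$ at $p$ inside $\mathbb{P}(T_pS)$, a line in $\Gr(1,3)$, so $g^\ast\mathcal{O}_{\Gr}(1)\cdot\ell_0=1=\theta\cdot\ell_0$; and it sends $C_{p,D}$ to the image of $D$ under projection from $p$, giving $g^\ast\mathcal{O}_{\Gr}(1)\cdot C_{p,D}=H\cdot D=\theta\cdot C_{p,D}$. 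Since these curves detect $\Pic S^{[2]}$, this forces $g^\ast\mathcal{O}_{\Gr}(1)=\theta$, whence $\iota^\ast\theta=\theta$.

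It remains to determine $\iota^\ast$ on $\theta^{\perp}\cap\Pic S^{[2]}$. Here I would use that $\iota$ is antisymplectic, so $\iota^\ast=-\mathrm{id}$ on $H^{2,0}\oplus H^{0,2}$; in particular, on the transcendental lattice $\iota^\ast=-\mathrm{id}$, and the invariant lattice $L_+:=H^2(S^{[2]},\mathbb{Z})^{\iota^\ast}$ lies in $\Pic S^{[2]}$. The key point is that $\operatorname{rk}L_+=1$. This rank is constant as $(S,H)$ varies over the connected family of line-free quartics, along which $\iota$ deforms, so it suffices to compute it for a very general quartic, where $\Pic S^{[2]}=\langle j(H),\delta\rangle$ has Gram matrix $\operatorname{diag}(4,-2)$: if $\iota^\ast$ were the identity on this rank-$2$ lattice, then $\iota^\ast=(\mathrm{id},-\mathrm{id})$ on $\NS\oplus T$ would have to preserve the gluing, forcing the discriminant form of $\NS$ to be $2$-torsion; but it is $\mathbb{Z}/4\oplus\mathbb{Z}/2$, a contradiction. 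Hence $\operatorname{rk}L_+=1$, and since $\theta\in L_+$ with $q(\theta)=2>0$ we get $L_+\otimes\mathbb{Q}=\mathbb{Q}\theta$. As the two eigenspaces of the isometric involution $\iota^\ast$ are orthogonal, $\iota^\ast=+\mathrm{id}$ on $\mathbb{Q}\theta$ and $\iota^\ast=-\mathrm{id}$ on $\theta^{\perp}$, i.e. for all $\alpha\in\Pic S^{[2]}$,
$$\iota^\ast(\alpha)=-\alpha+(\alpha,\theta)\,\theta,$$
which is exactly the reflection fixing the span of $\theta$ and negating $\theta^{\perp}$; on generators this reads $\iota^\ast j(H)=3j(H)-4\delta$ and $\iota^\ast\delta=2j(H)-3\delta$.

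The main obstacle is the identification $g^\ast\mathcal{O}_{\Gr}(1)=\theta$: it rests on controlling the secant-- and tangent-line geometry of the quartic (that $g$ restricted to $\ell_0$ and $C_{p,D}$ is birational onto a curve of the predicted Pl\"ucker degree, with no hidden base points because $S$ is line-free), and on the projection-from-a-point description of $C_{p,D}$. A secondary subtlety is the rank-one claim for $L_+$: for the very general $(S,i)$ the lattice $\Pic S^{[2]}$ can have rank larger than $2$ (the extra classes coming from $H^2(S,\mathbb{Z})^i$ all lie in $\theta^{\perp}$ and must be negated), so one genuinely needs the deformation-invariance of $\operatorname{rk}L_+$ to reduce to the explicit rank-$2$ discriminant computation rather than checking $\iota^\ast$ on those extra classes by hand.
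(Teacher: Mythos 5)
The paper itself gives no proof of this statement; it is quoted directly from O'Grady (\cite{OG-2005}, Proposition 4.1). So your proposal can only be measured against the standard argument behind that reference, and your outline is exactly that argument: show that $\theta=j(H)-\delta$ is the pullback of the Pl\"ucker polarization, hence $\iota^*$-invariant; use antisymplecticity to get $\iota^*=-\id$ on the transcendental lattice (this silently uses the standard irreducibility of the transcendental Hodge structure, which is fine); and pin down the invariant lattice by a computation for the very general quartic plus deformation invariance. The first step is correct as written, and in fact cleaner than you claim: $g$ is a genuine morphism, since every length-$2$ subscheme of $\PP^3$ spans a unique line (line-freeness of $S$ is needed for $\iota$, not for $g$), so $g^*\mathcal{O}_{\Gr}(1)$ and the identity $\iota^*g^*\mathcal{O}_{\Gr}(1)=g^*\mathcal{O}_{\Gr}(1)$ are unproblematic; your test-curve computations $g^*\mathcal{O}_{\Gr}(1)\cdot\ell_0=1$ and $g^*\mathcal{O}_{\Gr}(1)\cdot C_{p,D}=H\cdot D$ are right and do detect $\Pic S^{[2]}$. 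Your closing remark, that for the $S$ actually used in the paper (where $\Pic S^{[2]}$ has rank $9$) one must invoke deformation invariance rather than check the extra classes by hand, is also exactly the right point.

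The one genuine flaw is in the lattice-theoretic step proving $\operatorname{rk}L_+=1$ for the very general quartic. You assert that if $\iota^*=(\id,-\id)$ on $\NS\oplus T$ extended to $H^2(S^{[2]},\Z)$, then the discriminant form of $\NS$ would have to be $2$-torsion. That implication is false: what preservation of the gluing forces to be $2$-torsion is the glue group $G:=H^2(S^{[2]},\Z)/(\NS\oplus T)$ (equivalently its isomorphic images inside $A_{\NS}$ and $A_T$), which in general is a \emph{proper} subgroup of $A_{\NS}$. The statement you are implicitly using is the one valid for involutions of \emph{unimodular} lattices, where the glue group is all of $A_{\NS}$; but $H^2(S^{[2]},\Z)$ has discriminant group $\Z/2$, so it does not apply, and a priori $A_{\NS}=\Z/4\oplus\Z/2$ is compatible with a $2$-elementary $G$, in which case you would get no contradiction. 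Fortunately the conclusion is correct and the repair is one line: since $H^2(S^{[2]},\Z)\cong H^2(S,\Z)\oplus\Z\delta$ and $\NS\oplus T=(\Z H\oplus H^{\perp})\oplus\Z\delta$, one has $G\cong H^2(S,\Z)/(\Z H\oplus H^{\perp})\cong\Z/4$, cyclic of order $4$ because $H$ is primitive of square $4$ in the unimodular K3 lattice; this is not $2$-elementary, whereas for any involution the identity $2x=(x+\iota^*x)+(x-\iota^*x)$ shows $G$ must be. Alternatively you can avoid discriminant forms entirely: $\iota$ sends a generic non-reduced subscheme to a reduced one, so $\iota(E)\neq E$; since $E\cdot\ell_0=-2<0$ forces $E$ to be the unique effective divisor in its class, $\iota^*\delta\neq\delta$; and an involutive isometry of the rank-$2$ lattice $\langle j(H),\delta\rangle$ fixing $\theta$ is either the identity or the reflection in $\theta$.
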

\begin{nota}
We denote by $\mathcal{M}$ the moduli space of marked irreducible symplectic orbifolds of Nikulin type.
\end{nota}
\begin{thm}\label{mainappli}
 Let $\mathcal{P}$ be a very general Markushevich--Tikhomirov variety (constructed from a very general 2-elementary K3 surface). 
 Let $\rho: M'\dashrightarrow \mathcal{P}$ be the Mukai flop of Proposition \ref{rappel}.
 Let $\varphi$ be a marking for $\mathcal{P}$, then $(\mathcal{P},\varphi)$ and $(M',\rho^*\circ\varphi)$ are distinct non-separated points in
 $\mathcal{M}$. More precisely, the orbifolds $M'$ and $\mathcal{P}$ are not isomorphic.
\end{thm}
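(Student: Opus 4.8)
The plan is to split the statement into two parts: that the two marked orbifolds have the same period point (so that, by global Torelli, they are either equal or non-separated), and that $M'$ and $\mathcal{P}$ are genuinely non-isomorphic (which forces the two points to be distinct, hence non-separated). The non-isomorphism is the refined assertion and is where the real work lies.

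First I would record that the Mukai flop $\rho\colon M'\dashrightarrow\mathcal{P}$ of Proposition \ref{rappel} is a bimeromorphism of irreducible symplectic orbifolds, hence an isomorphism in codimension $1$ (Lemma \ref{biracodim}), so by Lemma \ref{isometry} it induces an isometry $\rho^*\colon H^2(\mathcal{P},\Z)\to H^2(M',\Z)$. Since $\rho^*\sigma_{\mathcal{P}}$ extends to a nonzero holomorphic $2$-form on $M'$, it is a multiple of $\sigma_{M'}$, so $\rho^*$ is a Hodge isometry; and by Proposition \ref{mainprop2} the flop extends to a family over a disc, which exhibits $\rho^*$ (equivalently $\rho_*$) as a parallel transport operator. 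Consequently $(\mathcal{P},\varphi)$ and $(M',\rho^*\circ\varphi)$ share the period $\varphi(\sigma_{\mathcal{P}})\in\dD$. By Theorem \ref{mainGTTO} the period map is injective on the Hausdorff reduction $\overline{\mathcal{M}}^{\circ}$, so two points of $\mathcal{M}^{\circ}$ with equal period are either equal or non-separated.

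It then remains to prove $M'\not\cong\mathcal{P}$; this at once gives that the two marked points are distinct, and combined with the previous paragraph that they are non-separated. By the Hodge version of the global Torelli theorem (Theorem \ref{main}(ii)), $M'\cong\mathcal{P}$ is equivalent to the existence of a parallel transport Hodge isometry $H^2(M')\to H^2(\mathcal{P})$ carrying a K\"ahler class to a K\"ahler class. As $\rho_*$ is already such a parallel transport Hodge isometry, every other one has the form $\rho_*\circ\gamma$ with $\gamma\in\MonHdg$ a Hodge monodromy operator of $M'$; and $\rho_*\circ\gamma$ sends a K\"ahler class to a K\"ahler class exactly when the interiors of $\gamma(\dK_{M'})$ and $\rho^*(\dK_{\mathcal{P}})$ meet, i.e. (distinct K\"ahler chambers of the movable cone have disjoint interiors) when $\gamma(\dK_{M'})=\rho^*(\dK_{\mathcal{P}})$. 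Thus $M'\cong\mathcal{P}$ if and only if some Hodge monodromy operator maps the K\"ahler chamber $\dK_{M'}$ onto the adjacent chamber $\rho^*(\dK_{\mathcal{P}})$ across the flopping wall. I would rule this out by an explicit computation in the very general case: from the Markushevich--Tikhomirov construction together with Proposition \ref{Ogradydy} and the Beauville--Bogomolov lattice of Example \ref{exempleBB}, one determines $\Pic M'$ and, via the wall-divisor results of Section \ref{K�hler}, the decomposition of its movable cone into the two K\"ahler chambers $\dK_{M'}$ and $\rho^*(\dK_{\mathcal{P}})$; one then shows that the reflection interchanging these chambers does not lie in $\MonHdg$, and that no other Hodge monodromy interchanges them.

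The main obstacle is precisely this last step. For very general $S$ the Hodge monodromy acts as $\pm\id$ on the transcendental lattice, so every relevant $\gamma$ is governed by its action on $\Pic M'$; the crux is to pin down the flopping wall sharply enough (the wall divisor it is orthogonal to, and its square and divisibility) to conclude that exchanging the two chambers would require an isometry outside the monodromy group. Once the two chambers are seen to be inequivalent under $\MonHdg$, Theorem \ref{main}(ii) yields $M'\not\cong\mathcal{P}$, and the distinctness and non-separatedness of $(\mathcal{P},\varphi)$ and $(M',\rho^*\circ\varphi)$ follow as above.
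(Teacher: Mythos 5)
Your reduction of the theorem into two parts, and your first paragraph (the two marked points share a period point, hence by Theorem \ref{mainGTTO} they are equal or non-separated; distinctness then follows from non-isomorphism), match the paper's structure. The gap lies in the second half, which is the actual content of the theorem: your argument there is a plan, not a proof. You correctly reformulate $M'\cong\mathcal{P}$ as the existence of some $\gamma\in\MonHdg(M')$ with $\gamma(\dK_{M'})=\rho^*(\dK_{\mathcal{P}})$, but you then say you \emph{would} rule this out by an explicit computation whose decisive ingredient --- that no Hodge monodromy operator exchanges the two K\"ahler chambers --- is never established. Moreover, this step cannot be completed with the results available in this paper: neither the group $\MonHdg(M')$ nor the set of wall divisors of $M'$ is computed here (the paper explicitly defers the description of wall divisors on $M'$ to a sequel), and your remark that the Hodge monodromy acts as $\pm\id$ on the transcendental lattice, even granted, says nothing about its action on $\Pic M'$, which is what the chamber-exchange question depends on. So the crux of the theorem is left unproven.

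For contrast, the paper's proof avoids any monodromy computation. Assuming $\mathcal{P}\simeq M'$, it considers the induced bimeromorphic self-map $\psi$ of $M'$, descends it to a self-map $\psi_0$ of the projective quotient $M=S^{[2]}/i^{[2]}\circ\iota$, and uses Lemma \ref{biracodim} together with the Banica--Stanasila theorem (bimeromorphic maps that are isomorphisms in codimension $1$ preserve spaces of sections of line bundles) to show that $\psi_0^*$ fixes the ample generator $\pi_*(\theta)$ of $\Pic M\otimes\Q$. Then the Fujiki formula, the projection formula and Lemma \ref{isometry} force $\psi^*(\Sigma')=\Sigma'$ as well, so $\psi^*=\id$ on $\Pic M'$ by Proposition \ref{Ogradydy}. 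Hence $\psi$ maps an ample class to an ample class and extends to an isomorphism, contradicting the fact that the Mukai flop $\rho$ does not extend to one. If you want to salvage your approach, you would need to supply precisely this kind of concrete input (the flopping wall divisor, its square and divisibility, and enough knowledge of $\MonHdg(M')$), which is exactly what this paper does not yet provide; as written, your proposal does not prove the theorem.
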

\begin{proof}
Because of Propositions \ref{rappel} and \ref{mainprop2}, we know that $(\mathcal{P},\varphi)$ and $(M',\rho^*\circ\varphi)$ are non-separated points in $\mathcal{M}$.
Now, we are going to prove that $M'$ and $\mathcal{P}$ are not isomorphic. We will assume that there exists an
isomorphism $\mathcal{P} \simeq M'$  and we will find a contradiction.
The composition $\psi:M'\dashrightarrow \mathcal{P}\simeq M'$ induces a bimeromorphism of $M'$ to itself. 
We will show that this bimeromorphism is necessarily an isomorphism; since $\rho$ is not an isomorphism it will be a contradiction.

Let $\pi:S^{[2]}\rightarrow M:=S^{[2]}/i^{[2]}\circ \iota$ be the quotient map. Let $\Sigma$ be the surface fixed by the involution $i^{[2]}\circ\iota$ and $\Sigma'$ the exceptional divisor of the blow-up
$r:M'\rightarrow M$. We have a commutative diagram:
\begin{equation}
\xymatrix{M'\ar@{.>}[r]^{\psi}\ar[d]^r& M'\ar[d]^r\\
M \ar@{.>}[r]^{\psi_0}& M.}
\label{digra}
\end{equation}
Because of Proposition \ref{Ogradydy}:
\begin{equation}
\Pic M\otimes \Q=\Q\pi_*(\theta)\ \text{and}\ \Pic M'\otimes \Q=\Q r^*\pi_*(\theta)\oplus\Q\Sigma'.
\label{pica}
\end{equation}
Let $\mathcal{L}$ be the line bundle on $M$ associated to $\pi_*(\theta)$.
By Lemma \ref{biracodim}, $\psi$ is an isomorphism in codimension 1. It follows that $\psi_0$ is also an isomorphism in codimension 1.
Hence by \cite[Corollary II 3.15]{Banica-Stanasila} we have:
\begin{equation}
H^0(M,\mathcal{L}^n)=H^0(M,\psi_0^*(\mathcal{L})^n),
\label{Bonica}
\end{equation}
for all $n\geq0$.

By construction $M$ is projective and by (\ref{pica}) $\mathcal{L}$ is an ample line bundle.
It follows from (\ref{Bonica}), that: $$\psi_0^*(\pi_*(\theta))=\pi_*(\theta).$$
Therefore, by commutativity of the diagram (\ref{digra}), we have:
$$\psi^*(r^*(\pi_*(\theta)))=r^*(\pi_*(\theta)).$$

We recall the developed form of the Fujiki formula (see \cite[Theorem 3.17]{Menet-2020}):
\begin{equation}
\alpha_1\cdot\alpha_2\cdot\alpha_3\cdot\alpha_4=\frac{c_{M'}}{24}\sum_{s\in S_{4}}B_{M'}(\alpha_{s(1)},\alpha_{s(2)})
B_{M'}(\alpha_{s(3)},\alpha_{s(4)}),
\label{Fujikiform}
\end{equation}
with $B_{M'}$ the Beauville--Bogomolov form and $c_{M'}$ the Fujiki constant.
Hence: 
$$r^*(\pi_*(\theta))^3\cdot \widetilde{\Sigma}=c_{M'}B_{M'}(\pi_*(\theta),\pi_*(\theta))B_{M'}(\pi_*(\theta),\Sigma').$$
By the projection formula (\cite[Remark 2.8]{Menet-2020}):
$$r^*(\pi_*(\theta))^3\cdot \widetilde{\Sigma}=0.$$
Moreover by \cite[Proposition 7.10]{Menet-2018}:
\begin{equation}
B_{M'}(r^*(\pi_*(\theta)),r^*(\pi_*(\theta)))\neq0.
\label{BB2}
\end{equation}
It follows:
\begin{equation}
B_{M'}(r^*(\pi_*(\theta)),\Sigma')=0.
\label{BB1}
\end{equation}
Moreover, by Lemma \ref{isometry}, $\psi^*$ is an isometry on $(\Pic M',B_{M'})$. Therefore by (\ref{BB2}) and (\ref{BB1}), we have $\psi^*(\Sigma')=\pm\Sigma'$.
We conclude as before using Banica and Stanasila theorem that: 
$$\psi^*(\Sigma')=\Sigma'.$$
It follows that $\psi^*$ induces the identity on $\Pic M'$. So $\psi^*$ necessarily sends an ample class to an ample class. 
Hence $\psi$ extends to an isomorphism.
\end{proof}
\section{The Kähler cone}\label{Kähler}
\subsection{Generalization of Boucksom's criterion}


This section is devoted to the proof of the following theorem. We recall that the positive cone $\mathcal{C}_X$ is defined in Section \ref{posi}.
\begin{thm}\label{Kählercriterion}
Let $X$ be an irreducible symplectic orbifold. Let $\alpha\in \mathcal{C}_X$ such that $C\cdot \alpha>0$ for all rational curves $C\subset X$. Then 
$\alpha$ is a Kähler class.
\end{thm}
\begin{lemme}\label{intercone}
Let $X$ be an irreducible symplectic orbifold. Let $\alpha\in \overline{\mathcal{C}_X}$ such that $C\cdot \alpha\geq0$ for all rational curves $C\subset X$. Then 
$\alpha\in \overline{\mathcal{K}_X}$.
\end{lemme}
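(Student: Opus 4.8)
The plan is to reduce the statement to a dense family of \emph{very general} classes of the positive cone, for which the deformation machinery of Section \ref{posi} already yields the desired positivity, and then to recover the general case by a careful two-step approximation. Only the stated implication is needed; the reverse inclusion (that a limit of K\"ahler classes is non-negative on every rational curve) plays no role.

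First I would isolate the following auxiliary statement: if $\gamma\in\mathcal{C}_X$ is very general in the sense of Proposition \ref{oldprop} and $\gamma\cdot C>0$ for every rational curve $C\subset X$, then $\gamma\in\mathcal{K}_X$. Applying Proposition \ref{oldprop} to $\gamma$ produces families $\mathcal{X}\to S$ and $\mathcal{X}'\to S$ over a one-dimensional disk, a bimeromorphism $F\colon\mathcal{X}'\dashrightarrow\mathcal{X}$ which is an isomorphism away from $0$, with $\mathcal{X}_0\simeq X$, and such that the correspondence pullback $F^*\gamma$ is a K\"ahler class on $\mathcal{X}'_0$. Writing $F_0\colon\mathcal{X}'_0\dashrightarrow X$ for the induced bimeromorphism of special fibres, the class $F_0^*\gamma=F^*\gamma$ is K\"ahler, hence strictly positive on every rational curve of $\mathcal{X}'_0$, whereas $\gamma$ is strictly positive on every rational curve of $X$ by hypothesis. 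Proposition \ref{mainprop}, applied to $F_0$ and the class $\gamma$, then forces $F_0$ to extend to an isomorphism; consequently $\gamma$ is the image of the K\"ahler class $F^*\gamma$ under an isomorphism, and is therefore itself K\"ahler.

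For the general case, fix a K\"ahler class $\omega\in\mathcal{K}_X$ and set $\alpha_\varepsilon:=\alpha+\varepsilon\omega$ for $\varepsilon>0$. Since $\overline{\mathcal{C}_X}$ is a closed convex cone and $\omega$ lies in its interior, $\alpha_\varepsilon\in\mathcal{C}_X$, and moreover $\alpha_\varepsilon\cdot C=\alpha\cdot C+\varepsilon\,\omega\cdot C>0$ for every rational curve $C$, because $\alpha\cdot C\geq 0$ and $\omega\cdot C>0$. As $\overline{\mathcal{K}_X}$ is closed and $\alpha_\varepsilon\to\alpha$ when $\varepsilon\to 0$, it is enough to prove $\alpha_\varepsilon\in\overline{\mathcal{K}_X}$ for each fixed $\varepsilon$.

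The main obstacle is to approximate $\alpha_\varepsilon$ by very general classes while retaining strict positivity on \emph{all} (a priori infinitely many) rational curves at once, without invoking any finiteness of rational curve classes. The device is to keep the perturbation inside the K\"ahler cone. The very general classes form the complement of countably many nowhere dense closed subsets, hence are dense in $\mathcal{C}_X$ by Baire; choose very general $\gamma_n\in\mathcal{C}_X$ with $\gamma_n\to\alpha_\varepsilon$. Since $\varepsilon\omega\in\mathcal{K}_X$ and $\mathcal{K}_X$ is open, for $\|\gamma_n-\alpha_\varepsilon\|$ small enough the class $\varepsilon\omega+(\gamma_n-\alpha_\varepsilon)$ is again K\"ahler, so it is strictly positive on every rational curve; writing $\gamma_n=\alpha+\bigl(\varepsilon\omega+(\gamma_n-\alpha_\varepsilon)\bigr)$ gives $\gamma_n\cdot C=\alpha\cdot C+\bigl(\varepsilon\omega+(\gamma_n-\alpha_\varepsilon)\bigr)\cdot C>0$ for every rational curve $C$. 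By the auxiliary statement each such $\gamma_n$ is K\"ahler, whence $\alpha_\varepsilon=\lim_n\gamma_n\in\overline{\mathcal{K}_X}$, and letting $\varepsilon\to 0$ yields $\alpha\in\overline{\mathcal{K}_X}$.
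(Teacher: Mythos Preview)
Your proof is correct and follows essentially the same approach as the paper, which simply invokes \cite[Proposition 3.2]{Huybrechts2003} together with Propositions \ref{mainprop} and \ref{oldprop}; you have spelled out the details of that argument in the orbifold setting, including the two-step approximation (first add $\varepsilon\omega$, then perturb to very general classes while keeping the perturbation inside $\mathcal{K}_X$).

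One small point deserves a sentence of justification: you identify the correspondence pullback $F^*\gamma$ from Proposition \ref{oldprop} (defined via the cycle $\Gamma=\Ima(\mathcal{Z}_0\to X\times\mathcal{X}'_0)$) with the bimeromorphism pullback $F_0^*\gamma$. These could a priori differ by contributions from the extra components $Y_i$ of $\Gamma$. The equality holds because the maps $[Y_i]_*\colon H^2(X,\bC)\to H^2(\mathcal{X}'_0,\bC)$ are trivial, which follows from Lemmas \ref{orbiornotorbi} and \ref{biracodim} exactly as in the proof of Theorem \ref{K�hlercriterion} (and as in \cite[proof of Theorem 2.5]{Huybrechts2003}). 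With this clarified, your application of Proposition \ref{mainprop} goes through.
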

\begin{proof}
Following the same argument as \cite[Proposition 3.2]{Huybrechts2003}, this lemma is a consequence of Propositions \ref{mainprop} and \ref{oldprop}.
\end{proof}
We will adapt the proof of \cite{Boucksom-2001} to obtain Theorem \ref{Kählercriterion} from the previous lemma. 

Let $X$ be an irreducible symplectic orbifold endowed with a marking $\varphi\colon H^2(X,\bZ)\to \Lambda$ and
let $\alpha\in \mathcal{C}_X$ as in the statement of Theorem \ref{Kählercriterion}. Let
$f:\mathscr{X}\rightarrow \Def(X)$ be the Kuranishi deformation of $X$. We denote $o:=f(X)$. As explained in
Section \ref{per}, there exists $U$ an open neighborhood of $o\in \Def(X)$ such that the period map induces an
isomorphism $\mathscr{P}: U\rightarrow \mathscr{P}(U)$.
We denote:
 $$W_{\alpha}:=\Vect_{\C}(\varphi(\alpha), \varphi(\Rea \sigma_X), \varphi(\Ima \sigma_X))\ \text{and}\ T(\alpha)|_{U}:= \mathscr{P}^{-1}(\mathscr{P}(U)\cap \bP(W_{\alpha})).$$
By shrinking $U$ if necessary, we may assume that $U$ and $\mathscr{P}(U) \cap \bP(W_\alpha)$ are simply connected.

Then we consider the one dimensional deformation of $X$: 
\begin{equation}
f:f^{-1}(T(\alpha)|_U)\rightarrow T(\alpha)|_U
\label{localtwistor}
\end{equation}
For simplicity, we continue to denote $f^{-1}(T(\alpha)|_U)$ by $\mathscr{X}$.
The deformation $f:\mathscr{X}\rightarrow T(\alpha)|_U$ can be seen as a local twistor space.

For all $t\in T(\alpha)|_U$, we denote by $\varphi_t:=\varphi\circ u_t$ the markings, with
$u_t:H^*(\mathscr{X}_t,\Z)\rightarrow H^*(X,\Z)$ the parallel transport operator induced by the
deformation $f$. We denote by $\sigma_t$ the holomorphic 2-form on $\mathscr{X}_t$ and choose $\alpha_t$ such
that $\R_{>0}\alpha_t=\varphi_t^{-1}(W_{\alpha})\cap H^{1,1}(\cX_t,\bR)\subset \mathcal{C}_{\mathscr{X}_t}$.
\begin{lemme}\label{interproof}
Keeping the same notation as above, 
there exists $t\in T(\alpha)|_U$ such that $\alpha_t$ is a Kähler class on $\mathscr{X}_t$.
\end{lemme}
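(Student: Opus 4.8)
The plan is to find a very general point $t\in T(\alpha)|_U$ for which $\alpha_t$ is a K\"ahler class, by proving that the fibre $\mathscr{X}_t$ over such a $t$ carries no rational curve at all and then applying Lemma \ref{intercone}.

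First I would compute the Picard group of a very general fibre. Writing $\beta_t:=\varphi_t(\sigma_t)\in W_\alpha$ for the period of $\mathscr{X}_t$, we have $\varphi_t(\Pic\mathscr{X}_t)=\beta_t^{\perp}\cap\Lambda$. Because the Beauville--Bogomolov form is positive definite on the real three-space underlying $W_\alpha$, the periods $\beta_t$ trace out an open part of the smooth conic $\mathbb{P}(W_\alpha\otimes\mathbb{C})\cap\mathcal{D}$, so a lattice vector $v$ with nonzero $W_\alpha$-component satisfies $v\cdot\beta_t=0$ for only finitely many $t$. Hence, outside a countable set of points of the (one-dimensional) base $T(\alpha)|_U$, we get $\varphi_t(\Pic\mathscr{X}_t)=W_\alpha^{\perp}\cap\Lambda$, which is precisely the group of integral $(1,1)$-classes on $X$ orthogonal to $\alpha$; I call these $t$ very general. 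For such $t$ the vector $\varphi_t(\alpha_t)\in W_\alpha$ is orthogonal to $W_\alpha^{\perp}$, so $\alpha_t\cdot C=(\alpha_t,C^{\vee})_q=0$ for every curve $C\subset\mathscr{X}_t$; meanwhile $\alpha_t\in\mathcal{C}_{\mathscr{X}_t}$ by construction. Strict positivity against curves being therefore unavailable, the correct target is the complete absence of curves.

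The heart of the argument is to rule out rational curves on a very general fibre. Suppose some very general $\mathscr{X}_{t_0}$ carried a rational curve $C_0$; its dual class $\beta:=C_0^{\vee}$ lies in $(W_\alpha^{\perp}\cap\Lambda)\otimes\mathbb{Q}$ and is therefore of type $(1,1)$ on every fibre, being orthogonal to all periods $\beta_s\in W_\alpha$. By the deformation theory of rational curves in the locally trivial family $\mathscr{X}\to T(\alpha)|_U$ (the class $\beta$ remaining $(1,1)$ throughout), the curve $C_0$ persists on every fibre, and, the relative space of genus-zero stable maps being proper over the base, its flat limit over $o$ is a nonzero connected cycle on $X=\mathscr{X}_o$ supported on rational curves and lying in the same class. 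Writing this limit as $\sum_i m_iC_i$ with $m_i>0$ and each $C_i$ rational, the standing hypothesis $\alpha\cdot C_i>0$ gives $\alpha\cdot\big(\sum_i m_iC_i\big)>0$; but $\sum_i m_iC_i^{\vee}=\beta\in W_\alpha^{\perp}$ forces $\alpha\cdot\big(\sum_i m_iC_i\big)=(\alpha,\beta)_q=0$, a contradiction. Hence no very general fibre contains a rational curve.

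Finally, fix a very general $t$ (such $t$ exist, forming the complement of a countable subset of the disk). Lemma \ref{intercone} then applies with vacuous curve hypothesis and yields $\overline{\mathcal{C}_{\mathscr{X}_t}}\subseteq\overline{\mathcal{K}_{\mathscr{X}_t}}$; together with the inclusion $\mathcal{K}_{\mathscr{X}_t}\subseteq\mathcal{C}_{\mathscr{X}_t}$ this gives $\overline{\mathcal{K}_{\mathscr{X}_t}}=\overline{\mathcal{C}_{\mathscr{X}_t}}$. Since $q$ has signature $(1,\ast)$ on $H^{1,1}(\mathscr{X}_t,\mathbb{R})$, the positive cone $\mathcal{C}_{\mathscr{X}_t}$ is one (convex) nappe of $\{q>0\}$, and the K\"ahler cone is convex as well; two open convex cones with the same closure coincide, so $\mathcal{K}_{\mathscr{X}_t}=\mathcal{C}_{\mathscr{X}_t}\ni\alpha_t$, i.e.\ $\alpha_t$ is K\"ahler. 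I expect the deformation-theoretic step to be the main obstacle: one must guarantee that a rational curve on a very general fibre, whose class stays of type $(1,1)$, really does persist---as a sum of rational curves---on the special fibre $X$, and controlling the relative cycle space and the flat limits of rational curves requires particular care in the orbifold setting.
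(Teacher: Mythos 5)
Your overall skeleton matches the paper's: reduce to showing that some fibre of the local twistor family carries no rational curve (so that Lemma \ref{intercone} gives $\mathcal{K}_{\mathscr{X}_t}=\mathcal{C}_{\mathscr{X}_t}\ni\alpha_t$), and derive a contradiction with the hypothesis $\alpha\cdot C>0$ on the central fibre $X$ by producing an effective rational-curve cycle on $X$ whose class is orthogonal to $W_\alpha$. However, the mechanism by which you transport a rational curve to the central fibre contains a genuine gap: you invoke ``deformation theory of rational curves in the locally trivial family'' to claim that a rational curve on a single very general fibre persists on \emph{every} fibre once its class stays of type $(1,1)$. This openness/persistence statement is a real theorem for smooth hyperk\"ahler manifolds (it rests on Ran's deformation theory, as exploited by Amerik--Verbitsky and Mongardi), but it is not available in the orbifold setting: nothing in this paper develops deformation theory of stable maps into symplectic orbifolds, and the paper's own argument is structured precisely so as to never need it. You correctly flagged this step as the main obstacle; it is not a technicality ``requiring particular care'' but a missing theorem whose orbifold analogue would be a substantial piece of work.

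The paper circumvents this as follows. Instead of starting from a curve on one very general fibre, it assumes for contradiction that \emph{every} fibre $\mathscr{X}_t$, $t\in T(\alpha)|_U$, contains a rational curve. Since $H^{4n-2}(X,\Z)$ is countable while the base is uncountable, a pigeonhole argument (together with analyticity of the loci $S_\beta$ where a fixed class stays of type $(2n-1,2n-1)$) produces a single class $\beta$ which is a rational-curve class on the complement $\mathscr{U}$ of countably many points of the base. Only the \emph{closedness} direction is then needed---Nobile's theorem, cited in the paper: a class represented by rational curves on a dense set of fibres is represented on the central fibre by an effective combination of rational curves. Since $u_t(\sigma_t)\cdot\beta=0$ for all $t\in\mathscr{U}$, and already three distinct periods on the conic $T_{W_\alpha}$ span $W_\alpha\otimes\C$, one gets $\alpha\cdot\beta=0$, contradicting $\alpha\cdot C_i>0$ exactly as in your final display. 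So the paper needs only properness/limits (which is citable), never persistence. To repair your proof you would have to either prove the orbifold analogue of the Ran/Amerik--Verbitsky persistence theorem, or restructure the contradiction hypothesis as the paper does, so that rational curves exist on uncountably many fibres for free and only a limiting argument is required.
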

\begin{proof}
If there exists $t\in T(\alpha)|_U$ such that $\mathscr{X}_t$ does not contain any rational curve, by Lemma \ref{intercone}, we have $\mathcal{C}_{\mathscr{X}_t}=\mathcal{K}_{\mathscr{X}_t}$.
Therefore, we assume that for all $t\in T(\alpha)|_U$, the orbifold $\mathscr{X}_t$ contains a rational curve and we will find a contradiction.
Let $\beta\in H^{4n-2}(X,\Z)$, we consider the set $S_{\beta}$ of $t\in U$ such that $u_t^{-1}(\beta)$ is a cohomology class of type $(2n-1,2n-1)$. As explained in the beginning of \cite[Section 4.2]{Menet-2020}, the $S_{\beta}$ are analytic subsets of $U$.
By \cite[Proposition 2.12]{Menet-2020} the class of a rational curve is of type $(2n-1,2n-1)$, hence $T(\alpha)|_U\subset \cup_{\beta\in H^{4n-2}(X,\Z)} S_{\beta}$.
Since $H^{4n-2}(X,\Z)$ is countable, necessarily there necessarily exists $\beta\in H^{4n-2}(X,\Z)$ and a set $\mathscr{U}$, which is the complement of countably many points in $T(\alpha)|_U$, such that $u_t^{-1}(\beta)$ is the class of a rational curve for all $t\in \mathscr{U}$. 
Let $\beta$ be such a class and $\mathscr{U}$ such a set. Then by \cite[Theorem 1.2]{Nobile}, $\beta$ is the class of an effective combination of rational curves in $X$.
However, $\sigma_t\cdot u_t^{-1}(\beta)=0$ for all $t\in \mathscr{U}$, that is $u_{t}(\sigma_t)\cdot \beta=0$
for all $t\in \mathscr{U}$. This implies that $w\cdot \beta = 0$ for all $w\in W_\alpha$, and in particular $\alpha\cdot \beta=0$ which is a contradiction.
\end{proof}
\begin{proof}[Proof of Theorem \ref{Kählercriterion}]
Let $t\in T(\alpha)|_U$, as in Lemma \ref{interproof}. We consider:
$$W_t=\Vect_{\C}(\varphi_t(\alpha_t), \varphi_t(\Rea \sigma_t), \varphi_t(\Ima \sigma_t))\ \text{and}\ T_{W_t}:=\mathbb{P}(W_t)\cap \mathcal{D}.$$
By Lemma \ref{interproof} $\alpha_t$ is a Kähler class, and therefore we can apply Theorem \ref{Twistor} to consider $f':\mathscr{X}'\rightarrow T(\alpha_t)$ the twistor space associated to $\mathscr{X}_t$ and $\alpha_t$ (the period map provides an isomorphism $\mathscr{P}:T(\alpha_t)\rightarrow T_{W_t}$).

However, by construction $W_t=W_{\alpha}$. Hence, we have $\varphi(\sigma_o)\in T_{W_t}$. Let
$o_t\in T(\alpha_t)$ be such that $\cP(o_t)=\varphi(\sigma_o)\in T_{W_\alpha}$ and $X':=f'^{-1}(o_t)$. We denote by $\varphi'$ the
mark on $X'$ which arises by parallel transport from $\phi$. We consider $\mathcal{C}_{X'}\cap \varphi'^{-1}(W_t)=\R_{>0}\alpha'$. Since $f'$ is a twistor space, $\alpha'$ is a Kähler class on $X'$. Moreover by construction $\varphi(\alpha)=\varphi'(\alpha')$.
The two marked irreducible symplectic orbifolds $(X,\varphi)$ and $(X',\varphi')$ have the same image by the period map, hence 
by Theorem \ref{mainGTTO}, they are non-separated points in their moduli space.

So by \cite[Remark 3.24]{Menet-2020} there exists a $2n$-dimensional analytic cycle $\Gamma=Z+\sum Y_k\subset X\times X'$ such that:
\begin{itemize}
\item[(i)]
$Z$ induces a bimeromorphism:
$$\begin{array}{cccccc}
&&Z&&\\
&\pi\swarrow&&\searrow\pi'&\\
&X~-&\stackrel{g}{-}&\to~X';&
\end{array}$$
\item[(ii)]
the components $Y_k$ dominate neither $X$ nor $X'$;
\item[(iii)]
$[\Gamma]_*(\alpha)=\varphi'^{-1}\circ\varphi(\alpha)=\alpha'$, where
$[\Gamma]_*(\alpha):=\pi'_*([\Gamma]\cdot \pi^*\alpha)$.
\end{itemize}
Using Lemmas \ref{orbiornotorbi} and \ref{biracodim}, for the same reason as explained in \cite[proof of Theorem 2.5]{Huybrechts2003}, the maps $[Y_k]_*:H^2(X,\C)\rightarrow H^2(X',\C)$ are trivial.
Then by Proposition \ref{mainprop}, the bimeromorphism induced by $Z$ extends to an isomorphism.
\end{proof}



\subsection{Wall divisors} \label{sec:wall-divs}
In this section we will define wall divisors on irreducible symplectic orbifolds as Mongardi did in \cite[Definition 1.2]{Mongardi13}. 
The results of this section are obtained by adapting ideas from \cite{Mongardi13} and \cite{AmerikVerbitsky16X}.
\begin{defi}
  Let $X$ be an irreducible symplectic orbifold of dimension $2n$.
	\begin{itemize}
	\item[(i)]
We denote by $\BK_X$ the \emph{birational Kähler cone} which is the union $\cup f^{*}\mathcal{K}_{X'}$ for $f$
running through all bimeromorphic maps between $X$ and any irreducible symplectic orbifold $X'$. This is
well-defined by Lemma \ref{isometry}.
\item[(ii)]
We call the \emph{Mori cone} the cone in $H^{2n-1,2n-1}(X,\R)$ generated by the classes of irreducible curves contained in $X$.  
\item[(iii)]
 A \emph{Kähler-type chamber} of the positive cone $\mathcal{C}_X$ is a subset of the form $g\left[f^*(\mathcal{K}_{X'})\right]$, where $g\in Mon^2_{Hdg}(X)$, and $f:X\dashrightarrow X'$ is a bimeromorphic map to an irreducible symplectic orbifold $X'$.
\item[(iv)]
A \emph{Birational Kähler type chamber} of the positive cone $\mathcal{C}_X$ is a subset of the form $g(\BK_X)$, where $g\in Mon^2_{Hdg}(X)$.
\item[(v)]
A prime Cartier divisor $D$ with $q(D)<0$ is called a \emph{prime exceptional divisor}.
\end{itemize}
\end{defi}

\begin{defi}\label{walldefi}
Let $X$ be an irreducible symplectic orbifold and let $D\in\Pic(X)$. Then $D$ is called a \emph{wall divisor}
if $q(D)<0$ and $g(D^{\bot})\cap \BK_X =\emptyset$, for every parallel transport Hodge isometry $g \in \MonHdg(X)$.
\end{defi}
Note that an equivalent formulation for this definition is that $D^\perp$ does not intersect any Kähler-type
chamber.

One of our main results on wall divisor is Theorem \ref{thm:defo-for-WDintro}, of which we prove item (i) below.
\begin{thm}\label{thm:defo-for-WD}
Let $(X,\varphi)$ and $(Y,\psi)$ be two marked irreducible symplectic orbifolds in the same connected component of their moduli space. Let $D$ be a wall divisor of $X$ such that $(\psi^{-1}\circ \varphi)(D)\in \Pic(Y)$. Then $(\psi^{-1}\circ \varphi)(D)$ is a wall divisor on $Y$.
\end{thm}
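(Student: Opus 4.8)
The plan is to follow Mongardi's strategy: assuming $f(D)$ fails to be a wall divisor on $Y$, I would transport a Kähler class orthogonal to $D$ back to the period of $X$ along a chain of twistor spaces confined to the sub-period-domain orthogonal to $D$, and then invoke the global Torelli theorem to reach a contradiction. First the easy half: since $(X,\varphi)$ and $(Y,\psi)$ lie in the same connected component $\mathscr{M}_\Lambda^\circ$, Lemma \ref{connected} shows that $f:=\psi^{-1}\circ\varphi$ is a parallel transport operator; being a composition of markings it is an isometry, so $f(D)^2=D^2<0$, and $f(D)\in\Pic(Y)$ by hypothesis. Thus the first condition of Definition \ref{walldefi} holds. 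Writing $\lambda_D:=\varphi(D)=\psi(f(D))\in\Lambda$, the remaining hyperplane condition depends only on $\lambda_D$ and on $\mathscr{M}_\Lambda^\circ$. I argue by contradiction, using the reformulation that $f(D)$ is a wall divisor iff $f(D)^{\perp}$ meets no Kähler-type chamber of $Y$. So assume there are $h\in\MonHdg(Y)$, a bimeromorphism $g:Y\dashrightarrow Y'$ and a Kähler class $\kappa\in\mathcal{K}_{Y'}$ with $h(g^*\kappa)\perp f(D)$. Since $h$ and $g^*$ are Hodge isometries (Lemma \ref{isometry}) and parallel transport operators (for $g^*$ via Proposition \ref{mainprop2}), the map $F:=(g^{-1})^*\circ h^{-1}\circ f:H^2(X,\Z)\to H^2(Y',\Z)$ is a parallel transport operator with $\kappa\perp F(D)$ and $F(D)\in\Pic(Y')$; here $F(D)$ stays of type $(1,1)$ precisely because $f(D)$ already is and the remaining factors are Hodge isometries. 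Under the marking of $Y'$ transported from $\varphi$ through $F$, the class $F(D)$ corresponds to $\lambda_D$, $\kappa$ to a class in $\lambda_D^{\perp}$, and the periods of both $Y'$ and $X$ lie in $\mathcal{D}_{\lambda_D^{\perp}}:=\mathcal{D}\cap\mathbb{P}(\lambda_D^{\perp}\otimes\C)$.

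The heart of the argument is the transport inside $\mathcal{D}_{\lambda_D^{\perp}}$. Because $F(D)\in\Pic(Y')$ and $\kappa\perp F(D)$, the three-space $W:=\Vect_\R(\kappa,\Rea\sigma_{Y'},\Ima\sigma_{Y'})$ lies in $\lambda_D^{\perp}$; hence by Theorem \ref{Twistor} the twistor space generated by $\kappa$ keeps $F(D)$ of type $(1,1)$ and $\kappa$ Kähler on every fibre, and its twistor line stays inside $\mathcal{D}_{\lambda_D^{\perp}}$. Now $\mathcal{D}_{\lambda_D^{\perp}}$ is the period domain of the lattice $\lambda_D^{\perp}$, whose positive part has dimension $3$, so it is connected by generic twistor lines exactly as in \cite[Proposition 3.7]{Huybrechts12} (the same computation as in the proof of Lemma \ref{connected}, carried out inside $\lambda_D^{\perp}$). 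Concatenating such twistor spaces—at each junction using the Kähler class reached there to define the next twistor space, and always keeping the chosen positive class orthogonal to $\lambda_D$—I would connect the period of $Y'$ to the period $p:=\mathscr{P}(X,\varphi)$, arranging the final twistor line to pass through $p$. This produces a marked orbifold $(X^\sharp,\varphi^\sharp)$ with period $p$ carrying a Kähler class $\kappa^\sharp$ with $\varphi^\sharp(\kappa^\sharp)\perp\lambda_D$.

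The contradiction is then extracted via the global Torelli theorem. Since $(X^\sharp,\varphi^\sharp)$ and $(X,\varphi)$ have the same period $p$, Theorem \ref{mainGTTO} makes them non-separated points; as $\varphi^{-1}\circ\varphi^\sharp$ is a parallel transport operator (Lemma \ref{connected}) sending period to period, hence a Hodge isometry, Theorem \ref{main}(i) shows $X^\sharp$ and $X$ are bimeromorphic through a map inducing $\varphi^{-1}\circ\varphi^\sharp$. Pulling $\kappa^\sharp$ back along this bimeromorphism gives a class in $\BK_X$ whose $\varphi$-image is $\varphi^\sharp(\kappa^\sharp)\perp\lambda_D$; that is, $D^{\perp}\cap\BK_X\neq\emptyset$, which is the $g=\id$ instance of Definition \ref{walldefi} and contradicts that $D$ is a wall divisor on $X$.

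I expect the main obstacle to be the twistor transport inside $\mathcal{D}_{\lambda_D^{\perp}}$: one must adapt the proof of Lemma \ref{connected} to the orthogonal sub-period-domain while genuinely carrying an \emph{honest} Kähler class (not merely a positive class) orthogonal to $\lambda_D$ at every step, and arrange the last twistor line to hit the prescribed period $p$. The other ingredients—negativity of the square, stability of type $(1,1)$ under the Hodge factors, and the passage from equal periods to a bimeromorphism—follow directly from the results already established.
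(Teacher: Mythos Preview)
Your overall strategy matches the paper's, but the step you yourself flag as the ``main obstacle'' is in fact the heart of the argument, and your proposal does not contain the idea needed to overcome it. At a junction between two twistor lines $T_W$ and $T_{W'}$ inside $\dD_{\lambda_D^\perp}$, the K\"ahler class you inherit from the first twistor family lies in $W$, not in $W'$; to lift $T_{W'}$ to an honest twistor space via Theorem~\ref{Twistor} you need a \emph{different} K\"ahler class, namely a generator of the positive line $W'\cap\sigma^\perp$. Nothing in your argument guarantees this class is K\"ahler. The paper closes this gap as follows: choose the initial $\omega\in D_Y^\perp\cap\dK_Y$ so that $\omega^\perp\cap H^{1,1}(Y,\bZ)=\bZ D_Y$, and arrange (as in the proof of \cite[Proposition~3.7]{Huybrechts12}) that successive twistor lines meet in \emph{very general} points of $\dD_{\lambda_D^\perp}$. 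Then at each junction $(Z,\eta)$ one has $\Pic(Z)=\bZ D_Z$; since the inherited K\"ahler class $\omega_Z$ is orthogonal to $D_Z$, every class in $H^{2n-1,2n-1}(Z,\bZ)$ pairs to zero with $\omega_Z$, so $Z$ carries no effective curve and Theorem~\ref{K�hlercriterion} gives $\dK_Z=\dC_Z$. Now any positive three-space $W'\subset\lambda_D^\perp$ through $\mathscr{P}(Z,\eta)$ lifts, because the positive generator of $W'\cap\sigma_Z^\perp$ is automatically K\"ahler. Without this ``$\Pic=\bZ D\Rightarrow\dK=\dC$'' mechanism the chain of twistor spaces cannot be propagated.

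There is also a small inaccuracy in your final step. Theorem~\ref{main}(i) yields a bimeromorphism $X\dashrightarrow X^\sharp$, but not one that necessarily induces $\varphi^{-1}\circ\varphi^\sharp$ on $H^2$; the cycle $\Gamma=Z+\sum Y_i$ realizing the parallel transport need not have the $Y_i$ act trivially here. The paper instead observes that $f^*\circ(\varphi^\sharp)^{-1}\circ\varphi$ is a parallel transport Hodge isometry (combining Proposition~\ref{mainprop2} for $f^*$ with the construction of $\varphi^\sharp$), and that $(f^*\circ(\varphi^\sharp)^{-1}\circ\varphi(D))^\perp$ meets $f^*(\dK_{X^\sharp})\subset\BK_X$; this is precisely the witness required by Definition~\ref{walldefi} with $g=f^*\circ(\varphi^\sharp)^{-1}\circ\varphi$, rather than with $g=\id$ as you wrote.
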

\begin{proof}
We adapt the proof of \cite[Theorem 1.3]{Mongardi13}.
Let $\mathcal{M}_{\Lambda}^{\circ}$ be a connected component of the moduli spaces $\mathcal{M}_{\Lambda}$ which contains $(X,\varphi)$ and $(Y,\psi)$.
Let $D\in \Pic(X)$ with $q(D)<0$ such that $D_Y\coloneqq (\psi^{-1}\circ \varphi)(D)\in \Pic(Y)$.
Assume that $D_Y$ is not a wall divisor on $Y$. We need to deduce that $D$ cannot be a
wall divisor on $X$. Note that we may assume that $D_Y$ is primitive.

Since by assumption $D_Y$ is not a wall divisor on $Y$, there exists another marking $\psi_2$ on $Y$ such that
${\psi_2}^{-1}\circ \psi$ is a parallel transport Hodge isometry  which satisfies
$({\psi_2}^{-1}\circ \psi)(D_Y^\perp)\cap \BK_Y  =({\psi_2}^{-1}\circ \varphi)(D^\perp)\cap \BK_Y \neq \emptyset$.
Therefore, up to replacing $\psi$ by $\psi_2$, we can assume
$D_Y^\perp\cap \BK_Y \neq \emptyset$.

By the definition of the birational Kähler cone, this implies that there exists a bimeromorphic map $f\colon Y\dashrightarrow
Y'$ between irreducible symplectic orbifolds and such that $D_Y^\perp \cap f^*(K_{Y'})  \neq \emptyset$. However, by Proposition \ref{mainprop2} we know that
$(Y,\psi)$ and $(Y',\psi \circ f)$ are deformation equivalent. 
Thus, up to replacing $Y$ by $Y'$ and $D_Y$ by ${f^{-1}}^* D_Y$ we can assume that $D_Y^\perp \cap K_Y  \neq \emptyset$.

Choose a Kähler class $\omega\in D_Y^\perp \cap K_Y$ such that $\bZ D_Y$ are the only elements in
$H^{1,1}(Y,\bZ)$ which are orthogonal to $\omega$. This is possible, since $D_Y^\perp \cap K_Y$ $\subseteq D_Y^\perp$ is a non-empty open  subset.
Consider the twistor family $\cX^0 \to \bP^1$ associated to $\omega$ (see Theorem \ref{Twistor}). By construction of the
twistor space,  the period map
identifies this $\bP^1$ with the twistor line $T_W$ for the positive 3-space $W\coloneqq \Vect_\bR(\psi(\omega), \psi(\Rea\sigma_Y), \psi (\Ima\sigma_Y))$.
The choice of $\omega$ implies that  $W^\perp \cap \psi(H^{1,1}(Y,\bZ))=\Z\psi(D_Y)$. Therefore, a very
general fiber $(Z,\eta)$ of the twistor space satisfies that $\Pic(Z)=D_Z\cdot \bZ$, where
$D_Z\coloneqq \eta^{-1}\circ \psi(D_Y)$.
Note that $D_Z$ cannot be a wall divisor on $Z$, since by the construction of the twistor family $Z$ is
equipped with a Kähler form $\omega_Z \in W \subseteq D_Z^\perp$.

We claim that this implies $\mathcal{K}_Z=\mathcal{C}_Z$. Indeed, $Z$ cannot contain an effective curve, since
$H^{2n-1,2n-1}(Z,\bZ)$ consists exactly of the ray  dual to $D_Z$, which pairs trivially with the Kähler
class $\omega_Z$.  Theorem \ref{Kählercriterion} therefore
implies that $\dK_Z=\dC_Z$ as claimed.

Note that this has the following consequence. Let $W'\subseteq \Lambda_{\bR}$ be any positive 3-space such
that $\mathscr{P}(Z,\eta)\in T_{W'}$ (i.e.~$W'\supseteq \Vect_\bR( \Rea (\sigma_Z), \Ima (\sigma_Z)$).
Then there exists a twistor family $\cX_1 \to \bP^1$ such that $\bP^1$ is identified with $T_{W'}$ via the
period map. Indeed, $W'\cap \Vect_\bR( \Rea (\sigma_Z), \Ima (\sigma_Z))^\perp$ is a one-dimensional
positive subspace, which therefore contains an element $\omega_{W'}\in \dC_Z =\dK_Z$ and $\cX_1$ is obtained
as the twistor family associated to $\omega_{W'}$. 

We now connect $\mathscr{P}(Z,\eta)$ back to $\mathscr{P}(X,\phi)$ via convenient twistor spaces.
Let $N$ be the sublattice of $\Lambda$ defined by $N:=\varphi(D)^{\bot}=\psi(D_Y)^\perp =\eta(D_Z)^\perp$. Since $q(D)<0$, the lattice $N$ has signature $(3,\rk N-3)$.
Therefore by \cite[Proposition 3.7]{Huybrechts12}  the period domain
\begin{equation*}
  \mathcal{D}_N:=\mathbb{P}\left(\left\{\sigma\in
  N\otimes\mathbb{C}\ |\ \sigma^{2}=0,\ (\sigma+\overline{\sigma})^{2}>0\right\}\right)\subseteq
  \mathbb{P}(N\otimes\mathbb{C})\subseteq \bP(\Lambda\otimes \bC)
\end{equation*}
is connected by generic twistor lines. Note that the proof of \cite[Proposition 3.7]{Huybrechts12} in fact shows
that the twistor lines can be chosen in a such a way that they intersect at very general points of $\dD_N$.
In particular, we can connect  $\mathscr{P}(Z,\eta)$ and $\mathscr{P}(X,\phi)$ by
generic twistor lines in $\dD_N$ intersecting in very general points.

As we have seen above, any generic twistor line $T_{W'}$ through $\mathscr{P}(Z, \eta)$ lifts to a twistor family. Repeating the
same arguments as above, we observe that  the fiber $(Z',\eta')$ over a very general point of this family satisfies $\Pic(Z')=\Z D_{Z'}$, with $D_{Z'}\coloneqq \eta'^{-1}\phi(D)$ which cannot be wall
divisor, since it is orthogonal to a Kähler class. As we observed above, this implies that
$\dC_{Z'}=\dK_{Z'}$, which allows us to repeat the same arguments.

In this way, we obtain an irreducible symplectic orbifold $(X',\varphi')$ with $\mathscr{P}(X',\varphi')=\mathscr{P}(X,\varphi)$,
which satisfies $ (\varphi'^{-1} \circ \varphi (D))^\perp \cap \dK_{X'}  \neq \emptyset$ (since it comes with a
Kähler class orthogonal to $\varphi'^{-1} \circ \varphi (D)$ by construction).
Since we assumed that $(X,\varphi)$ and $(Y,\psi)$ are deformation equivalent, $\psi^{-1} \circ \varphi$ is a
parallel transport operator. On the other hand $\varphi'^{-1} \circ \psi$ is a parallel transport operator by
construction. Therefore $\varphi'^{-1}\circ \varphi$ is a parallel transport operator, which is a Hodge
isometry since $\mathscr{P}(X',\varphi')=\mathscr{P}(X,\varphi)$.
By \cite[Theorem 1.1 and Proposition 3.22]{Menet-2020}, there is a bimeromorphism $f:X\dashrightarrow X'$.
Proposition \ref{mainprop2} shows that $f^*$ (and thus $f^*\circ \varphi'^{-1}\circ \varphi$) is a parallel transport Hodge isometry.
By construction, we have $\emptyset \neq (f^{*}\circ\varphi'^{-1}\circ \varphi(D))^{\bot}\cap
f^*(K_{X'})\subseteq (f^{*}\circ\varphi'^{-1}\circ \varphi(D))^{\bot}\cap \BK_X$. Thus $D$ is not a
wall divisor, which is what we wanted to show.
\end{proof}

\begin{defi}\label{def:setW}
Let $\dM_\Lambda$ be the moduli space of marked irreducible symplectic orbifolds with Beauville--Bogomolov lattice $\Lambda$ (see Section \ref{per}).

  For a given irreducible symplectic orbifold, we denote by $\cW_X\subseteq \Pic(X)$ the set of all primitive wall divisors.
  Furthermore, after choosing a connected component $\dM_\Lambda^\circ$, let $\cW_\Lambda \subseteq \Lambda$ be the set of all classes $\alpha\in \Lambda$ such that
  $\psi^{-1}(\alpha)\in \cW_Y$ is a primitive wall divisor for some $(Y,\psi)\in \dM^\circ_\Lambda$.
\end{defi}

As an immediate consequence of Theorem \ref{thm:defo-for-WD} we obtain the following corollary.
\begin{cor}\label{wall}
Let $\Lambda$ be a lattice of signature $(3,\rk \Lambda -3)$ and $\dM_{\Lambda}^{\circ}$ a connected component
of the associated moduli space of marked irreducible symplectic orbifolds. Then for any $(X,\varphi)\in
\dM_{\Lambda}^{\circ}$ the set $\varphi^{-1}(\cW_\Lambda)\cap \Pic(X)$ consists of the (primitive) wall
divisors on $X$.
\end{cor}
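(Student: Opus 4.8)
The plan is to prove the asserted equality of sets, namely that $\varphi^{-1}(\cW_\Lambda)\cap \Pic(X)$ equals the set $\cW_X$ of primitive wall divisors on $X$, by a straightforward double inclusion, with Theorem \ref{thm:defo-for-WD} doing all the real work. The point is that $\cW_\Lambda$ is defined in Definition \ref{def:setW} by declaring a class $\alpha \in \Lambda$ to be in $\cW_\Lambda$ as soon as it becomes a primitive wall divisor under \emph{some} marking of \emph{some} member of the fixed component $\dM_\Lambda^\circ$; Theorem \ref{thm:defo-for-WD} guarantees that this property then propagates to \emph{every} member of $\dM_\Lambda^\circ$, provided the transported class is still of type $(1,1)$, i.e.\ lands in the Picard group. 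This deformation-invariance is exactly what turns the ``for some'' in the definition into the ``for this $X$'' needed in the corollary.

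For the inclusion $\cW_X \subseteq \varphi^{-1}(\cW_\Lambda)\cap \Pic(X)$, I would simply take a primitive wall divisor $D$ on $X$ and apply the definition of $\cW_\Lambda$ with the witness $(Y,\psi)=(X,\varphi)$: since $\varphi^{-1}(\varphi(D))=D$ is a primitive wall divisor and $(X,\varphi)\in \dM_\Lambda^\circ$, we obtain $\varphi(D)\in \cW_\Lambda$, and $D\in \Pic(X)$ by hypothesis, so $D$ lies in the left-hand set.

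For the reverse inclusion, I would take $D\in \varphi^{-1}(\cW_\Lambda)\cap \Pic(X)$ and set $\alpha:=\varphi(D)\in \cW_\Lambda$. By definition there is a witness $(Y,\psi)\in \dM_\Lambda^\circ$ for which $D_Y:=\psi^{-1}(\alpha)$ is a primitive wall divisor on $Y$. The crucial observation is that $(\varphi^{-1}\circ \psi)(D_Y)=\varphi^{-1}(\alpha)=D$, and that $D\in \Pic(X)$ by assumption. Since $(Y,\psi)$ and $(X,\varphi)$ lie in the same connected component, I would apply Theorem \ref{thm:defo-for-WD} with the roles of the two orbifolds exchanged (source $Y$, target $X$) to conclude that $D$ is a wall divisor on $X$; primitivity of $D$ follows because $\varphi^{-1}\circ \psi$ is a lattice isometry carrying the primitive class $D_Y$ to $D$. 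Hence $D\in \cW_X$.

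I do not expect a genuine obstacle here: the content is entirely contained in Theorem \ref{thm:defo-for-WD}, and the only points requiring care are bookkeeping ones. Specifically, one must apply the theorem in the correct direction, using the symmetry of ``belonging to the same connected component of $\dM_\Lambda$'' so that wall divisors transport from the witness $Y$ back to $X$, and one must track primitivity through the isometries $\varphi$ and $\psi$. The hypothesis that the transported class lies in $\Pic$ is automatically satisfied, since it is built into the definition of the set $\varphi^{-1}(\cW_\Lambda)\cap \Pic(X)$.
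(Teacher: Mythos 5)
Your proof is correct and matches the paper's intent: the paper states Corollary \ref{wall} as an immediate consequence of Theorem \ref{thm:defo-for-WD}, and your double-inclusion argument simply spells out that consequence, using $(X,\varphi)$ itself as the witness for one inclusion and transporting the witness $(Y,\psi)$ back to $X$ via the theorem for the other. The bookkeeping points you flag (direction of application, primitivity preserved under the lattice isometry $\varphi^{-1}\circ\psi$) are handled correctly.
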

\begin{ex}[\cite{Mongardi13}, Proposition 2.12]
If $\mathscr{M}_{\Lambda}^{\circ}$ is a connected component of the moduli space of marked K3 surface, then:
$$\cW_\Lambda=\left\{\left.D\in \Lambda\ \right|\ D^2=-2\right\}.$$
If $\mathscr{M}_{\Lambda}^{\circ}$ is a connected component of the moduli space of marked irreducible
symplectic manifolds  deformation equivalent to a Hilbert scheme of 2 points on a K3 surface, then: $$\cW_\Lambda=\left\{\left.D\in \Lambda\ \right|\ q(D)=-2\right\}\cup\left\{\left.D\in \Lambda\ \right|\ q(D)=-10\ \text{and}\ D\cdot\Lambda\subset2\Z \right\}.$$
\end{ex}

\begin{prop}\label{prop:boundedqW}
Let $\Lambda$ be a lattice of signature $(3,\rk \Lambda -3)$. We assume that $\rk \Lambda \geq 5$.
	Then there exists a natural number $N\in \bN$ such that for every primitive wall divisor $D\in \cW_\Lambda$ the
  Beauville--Bogomolov square of $D$ satisfies $-N< q(D)<0$.
\end{prop}
\begin{proof}
The proof is an adaptation of \cite{AmerikVerbitsky16X}.
Let $\mathcal{M}_{\Lambda}^o$ be the connected component associated to $\mathscr{W}_{\Lambda}$. Let $(X,\varphi)\in \mathcal{M}_{\Lambda}^o$.
We set $\Lambda_{\K}:=\Lambda\otimes\K$ for any field $\K$. 
Let $\Gr_+$ be the Grassmannian of all positive three-space in $\Lambda_{\R}$.
Let $$\mathscr{W}_{\Lambda}^{\bot}:=\left\{\left.W\in \Gr_+\right|\ W^{\bot}\cap \mathscr{W}_{\Lambda} \neq\emptyset\right\}.$$
By \cite[Theorem 8.2]{Bakker-Lehn-GlobalTorelli}, $\Mon^2(X)$ is of finite index in $O^+(\Lambda)\subset O(\Lambda)$; therefore it is an arithmetic subgroup of $O^+(\Lambda)$.
Hence, we are in the framework of \cite[Theorem 1.2]{AmerikVerbitsky16X}; the theorem says that if $\mathscr{W}_{\Lambda}^{\bot}$ is not dense in $\Gr_+$ then the action of $\Mon^2(X)$ on $\mathscr{W}_{\Lambda}$ has a finite number of orbits. In this case, since the elements of $\Mon^2(X)$ preserve the Beauville--Bogomolov square, the set $\{q(D) |
  D\in \mathscr{W}_{\Lambda}\}$ will be finite, which will prove the claim.
Hence, we only have to prove that $\mathscr{W}_{\Lambda}^{\bot}$ is not dense in $\Gr_+$.

Let $U$ be an open subset of the Kuranishi space $\Def(X)$, such that the period map $\mathscr{P}:U\rightarrow\mathcal{D}_{\Lambda}$ is an isomorphism onto its image (see Section \ref{per}), and denote by $f$ the associated family $\mathscr{X}\rightarrow U$.
Let $\mathcal{K}_U$ be the subspace of $R^2f_*\R\rightarrow U$ which contains the Kähler classes for each fiber $\mathscr{X}_s$.
We denote by $u_s:H^2(\mathscr{X}_s,\R)\rightarrow H^2(X,\R)$ the parallel transport operator induced by $f$. 
The period map can be extended to $\mathcal{K}_U$ in a natural way. For this purpose, we consider the following period domain:
$$\Omega:=\left\{\left.(x,\omega)\in\mathbb{P}(\Lambda_{\C})\times \Lambda_{\R}\right|\ x^2=0,\ x\cdot \overline{x}>0,\ x\cdot \omega=0\right\}.$$
Then the period map is given by:
$$\mathscr{P}_{U}(\omega)=\left(\varphi\circ u_s(H^{2,0}(\mathscr{X}_s)),\varphi\circ u_s(\omega)\right),$$
with $s=g(\omega)$ for $g:\mathcal{K}_U\rightarrow U$.
According to the local Torelli theorem (see Section \ref{per}), we know that $\mathscr{P}_{U}$ is an injective differential map; moreover $\mathcal{K}_U$ and $\Omega$ have the same real dimension. So a dense set in $\Omega$ will intersect $\mathscr{P}_{U}(\mathcal{K}_U)$.
Finally, we can consider the following natural map:
$$\xymatrix@R0pt{\Phi:\ \Omega\ar[r]& \Gr_+\\
\ \ \ \ \ \ \ \ (x,\omega)\ar[r]&\Vect_{\R}(\Rea x,\Ima x, \omega).}$$
By definition \ref{walldefi}, $\Phi^{-1}(\mathscr{W}_{\Lambda}^{\bot})$ is not dense in $\Omega$. Therefore $\mathscr{W}_{\Lambda}^{\bot}$ cannot be dense in $\Gr_+$.
\end{proof}

\begin{lemme}\label{lem:locfin}
  Let $X$ be an irreducible symplectic orbifold such that $b_2(X)\geq5$.
  Then the chamber structure in $\dC_X$ cut out by wall divisors is locally finite in the following sense: Let
  $\Pi \subset \overline{\dC_X}$  be a rational polyhedral cone. Then the set
  $\{D\in  \cW_X \,|\, D^\perp \cap \Pi \neq \emptyset  \}$  is finite. 
\end{lemme}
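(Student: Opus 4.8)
The plan is to argue by contradiction, using the Lorentzian signature $(1,b_2(X)-3)$ of the Beauville--Bogomolov form on $H^{1,1}(X,\bR)$ together with the boundedness of the squares of primitive wall divisors from Proposition \ref{prop:boundedqW}. Suppose the set $\{D\in\cW_X \mid D^\perp\cap\Pi\neq\emptyset\}$ is infinite. Then I can pick a sequence of pairwise distinct primitive wall divisors $(D_n)_{n\in\bN}$ and nonzero classes $x_n\in D_n^\perp\cap\Pi$. After fixing $h\in\dC_X$ with $q(h)=1$, I would pass to the positive-definite majorant $\langle v,w\rangle:=2(v,h)_q(w,h)_q-(v,w)_q$ and its norm $\|\cdot\|$, so that the limiting arguments below are carried out for a genuine Euclidean structure on $H^{1,1}(X,\bR)$.

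First I normalize $\|x_n\|=1$. As $\Pi$ is closed and the unit sphere is compact, a subsequence satisfies $x_n\to x\in\Pi$ with $\|x\|=1$ and $q(x)\geq0$ (because $\overline{\dC_X}$ is closed). The $D_n$ are pairwise distinct vectors of the lattice $\Pic(X)$, and any bounded region contains only finitely many of them, so $\|D_n\|\to\infty$; setting $d_n:=D_n/\|D_n\|$ and passing to a further subsequence gives $d_n\to d$ with $\|d\|=1$, hence $d\neq0$. Here the key input is Proposition \ref{prop:boundedqW}: there is $N\in\bN$ with $q(D_n)\in(-N,0)$ for all $n$, so $q(d_n)=q(D_n)/\|D_n\|^2\to0$ and therefore $q(d)=0$; the limiting direction of the wall divisors is isotropic. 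Finally $(x_n,D_n)_q=0$ yields $(x_n,d_n)_q=0$, and in the limit $(x,d)_q=0$.

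Now I would feed in the signature. If $q(x)>0$, then $x$ lies in the open positive cone and $x^\perp$ is negative definite, so the nonzero $d\in x^\perp$ would have $q(d)<0$, contradicting $q(d)=0$. Hence every accumulation point $x$ arising this way satisfies $q(x)=0$, i.e.\ lies on $\partial\dC_X$. This already settles the lemma whenever the nonzero locus of $\Pi$ lies in the open cone $\dC_X$: then $x\in\Pi$ with $q(x)=0$ is impossible, so no infinite sequence exists. In other words, the argument shows directly that the wall-and-chamber structure is locally finite at every interior point of $\dC_X$, with all accumulation pushed to the null boundary.

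The remaining, and genuinely delicate, case is when $\Pi$ meets $\partial\dC_X$. The boundary part $\Pi\cap\{q=0\}$ consists of finitely many rational isotropic extremal rays of $\Pi$; since two linearly independent isotropic vectors cannot be orthogonal in Lorentzian signature, the relations $q(x)=q(d)=(x,d)_q=0$ force $d$ to be proportional to $x$, so $x\in\bR_{>0}e$ for one primitive isotropic $e\in\Pic(X)$ spanning such a ray, and $D_n$ becomes asymptotically parallel to $e$. The main obstacle is then to exclude an infinite family of wall divisors that are (asymptotically) orthogonal to the fixed isotropic class $e$: decomposing $\Pic(X)_\bQ=\bQ e\oplus\bQ f\oplus P$ with $(e,f)_q=1$, $q(f)=0$ and $P=\{e,f\}^\perp$ negative definite, the bounds $q(D_n)\in(-N,0)$ and $(x_n,D_n)_q\to0$ confine the $f$- and $P$-components to a finite set, leaving only translation along $\bR e$ free. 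I expect this to be the crux of the proof, and I would resolve it by invoking that $\MonHdg(X)$ preserves $\cW_X$ and acts with finitely many orbits on primitive wall divisors of bounded square (the finiteness underlying Proposition \ref{prop:boundedqW}), reducing the statement to the finiteness of wall divisors orthogonal to a fixed primitive isotropic class.
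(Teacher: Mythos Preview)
The paper does not argue directly at all: its proof is the single sentence that the boundedness from Proposition~\ref{prop:boundedqW} feeds into \cite[Proposition~2.2]{MarkmanYoshioka14}, a purely lattice-theoretic finiteness statement. Your compactness argument is thus a different route---you are essentially reproving the cited proposition by hand---and the part treating the case $q(x)>0$ is clean and correct. That is exactly the local finiteness of the wall-and-chamber decomposition at interior points of $\dC_X$, which is all that the applications in the paper (Proposition~\ref{extremalray} and Corollary~\ref{cor:alphamovedtoK}) ever use.

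Your boundary discussion, however, cannot be completed along the lines you sketch, because the assertion you are trying to prove there does not follow from the bounded-square hypothesis alone. Take a K3 surface with $\Pic(X)\supseteq U\oplus\langle-2\rangle$, with isotropic $e,f$ spanning $U$ and $\delta$ the $(-2)$-generator, and let $\Pi=\bR_{\geq0}e$. The vectors $D_a\coloneqq ae+\delta$ for $a\in\bZ$ are pairwise distinct primitive $(-2)$-classes---hence genuine wall divisors on a K3---all orthogonal to $e$, so $e\in D_a^\perp\cap\Pi$ for every $a$. No appeal to monodromy-orbit finiteness rescues this: those $D_a$ lie in a single $\MonHdg$-orbit, yet that orbit meets $e^\perp$ infinitely often. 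What \cite[Proposition~2.2]{MarkmanYoshioka14} actually yields (and how the lemma should be read) concerns hyperplanes meeting the relative interior of $\Pi$, equivalently $D^\perp\cap\Pi\cap\dC_X\neq\emptyset$; your first half already establishes this, and the isotropic-ray case should simply be dropped rather than patched.
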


\begin{proof}
  Using the boundedness of the Beauville--Bogomolov squares from Proposition \ref{prop:boundedqW}, this is an
  immediate consequence of \cite[Proposition 2.2]{MarkmanYoshioka14}. 
\end{proof}
\begin{rmk}
Proposition \ref{prop:boundedqW} and Lemma \ref{lem:locfin} will not be needed in the case $b_2(X)=4$ in order to prove the results of this section; another argument will be used in the case $b_2(X)=4$. We provide afterward some discussion when $b_2(X)=4$ in Section \ref{Kawa}.
\end{rmk}

We recall that the dual $\beta^{\vee}$ of a class $\beta\in H^{2n-1,2n-1}(X,\Q)$ is defined in Remark \ref{dualclass}.
\begin{prop}\label{extremalray}
Let $X$ be an irreducible symplectic orbifold. Let $R$ be an extremal ray of the Mori cone of $X$ of negative self intersection. Then any $D\in \Q R^{\vee}$ is a wall divisor.
\end{prop}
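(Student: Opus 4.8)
The plan is to adapt Mongardi's argument \cite[Theorem 1.3]{Mongardi13}, using the deformation invariance of wall divisors (Theorem \ref{thm:defo-for-WD}) to reduce the statement to an orbifold of Picard rank one, where the wall divisor condition becomes almost tautological. First note that, by Remark \ref{dualclass}, for $D=cR^{\vee}$ with $c\in\Q$ one has $(\,\cdot\,,D)_q=c\,(\,\cdot\,\cdot R)$ on $H^2(X,\C)$; in particular $D^{\perp}=R^{\perp}$ and $q(D)=c^2 q(R^{\vee})=c^2\,(R^{\vee}\cdot R)<0$, since ``$R$ of negative self intersection'' means exactly $q(R^{\vee})<0$. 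As the wall divisor condition in Definition \ref{walldefi} only involves $q(D)<0$ together with the family of hyperplanes $g(D^{\perp})=g(R^{\perp})$, which depends only on the line $\Q R^{\vee}$, it suffices to treat one primitive $D\in\Q R^{\vee}\cap\Pic(X)$, and I fix the sign of $c>0$ so that $(\kappa,D)_q=\kappa\cdot R>0$ for every K\"ahler class $\kappa$ (recall $R$ is effective).

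Next I would record that $D^{\perp}=R^{\perp}$ supports a facet of $\overline{\mathcal{K}_X}$. Indeed, the previous sign choice gives $\mathcal{K}_X\subseteq\{(\,\cdot\,,D)_q>0\}$, so $D^{\perp}\cap\mathcal{K}_X=\emptyset$. Using that $R$ is \emph{extremal} together with Lemma \ref{intercone}, one produces nef classes in the relative interior of $D^{\perp}\cap\overline{\mathcal{K}_X}$, so that this face genuinely spans the hyperplane $D^{\perp}$. This already shows $D^{\perp}$ is a wall of the K\"ahler cone of $X$ itself; the content of the proposition is that this wall survives all Hodge monodromy operators and all birational models.

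The core step is a deformation to Picard rank one. Using the surjectivity of the period map (\cite[Proposition 5.8]{Menet-2020}), I would deform $(X,\varphi)$ inside its connected component to a very general marked orbifold $(X_t,\varphi_t)$ whose period lies in the sub period domain associated to $N:=\varphi(D)^{\perp}$, so that $\Pic(X_t)=\Z D_t$ with $D_t$ the parallel transport of $D$. The decisive point is that the extremal negative ray $R$ must stay \emph{effective} along this deformation: being an extremal ray of negative Beauville--Bogomolov square it is represented by rational curves, and these persist as effective cycles over the locus where their class stays of Hodge type $(2n-1,2n-1)$, i.e.\ exactly over the sub period domain cut out by $N$ (this is where one invokes deformation theory of rational curves in the spirit of \cite[Theorem 1.2]{Nobile}, as already used in Lemma \ref{interproof}). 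Granting this, $X_t$ carries a rational curve in the class $R_t$ dual to $D_t$, so by Theorem \ref{K�hlercriterion} the cone $\mathcal{K}_{X_t}$ is precisely the open half of $\mathcal{C}_{X_t}$ on which $R_t$ is positive, with $D_t^{\perp}$ as its wall. Because $\Pic(X_t)=\Z D_t$ has rank one, every $g\in\MonHdg(X_t)$ satisfies $g(D_t)=\pm D_t$, hence $g(D_t^{\perp})=D_t^{\perp}$; moreover every bimeromorphic model $X_t'$ again has Picard rank one and a negative curve (by Lemma \ref{biracodim} and the impossibility of flopping the curve away), so $f^{*}\mathcal{K}_{X_t'}$ is again a half of $\mathcal{C}_{X_t}$ bounded by $D_t^{\perp}$. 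Consequently no K\"ahler-type chamber meets $D_t^{\perp}$, i.e.\ $g(D_t^{\perp})\cap\BK_{X_t}=\emptyset$ for all $g$, so $D_t$ is a wall divisor on $X_t$. Since $(\varphi^{-1}\circ\varphi_t)(D_t)=D\in\Pic(X)$ by construction, Theorem \ref{thm:defo-for-WD} transports this conclusion back and shows that $D$ is a wall divisor on $X$.

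The main obstacle is precisely the persistence of the extremal curve $R$ along the deformation to Picard rank one: one must ensure that the very general fibre $X_t$ still carries an effective curve in the class dual to $D_t$ (equivalently, that $\mathcal{K}_{X_t}\neq\mathcal{C}_{X_t}$), for otherwise $D_t^{\perp}$ would pierce the full positive cone and the argument would collapse. This is where the hypothesis that $X$ is projective or that $b_2(X)\geq5$ enters: in the projective case the cone theorem guarantees that the negative extremal ray is represented by rational curves which deform, while for $b_2(X)\geq5$ the boundedness of the squares of wall divisors (Proposition \ref{prop:boundedqW}) and the local finiteness of the chamber structure (Lemma \ref{lem:locfin}) ensure that the wall $D^{\perp}$ is not washed out under the deformation.
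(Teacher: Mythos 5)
Your overall strategy---deform to a very general Picard-rank-one orbifold along the period sub-domain cut out by $N=\varphi(D)^{\perp}$ and transport the wall-divisor property via Theorem \ref{thm:defo-for-WD}---takes place in the same setting as the paper's proof, but the logical direction is reversed, and this reversal creates a gap your proposal does not close. The paper argues by \emph{contradiction}: it assumes $D$ is \emph{not} a wall divisor, uses Theorem \ref{thm:defo-for-WD} to conclude that the transported class $D_Y$ on the very general rank-one fibre $(Y,\psi)$ is not a wall divisor either, deduces from the rank-one condition that $\dC_Y=\dK_Y$, and then lifts the positive $3$-space $W$ through the boundary class $\alpha\in D^{\perp}\cap\partial\dK_X$ to an honest twistor family; the fibre of that family over the period point of $(X,\varphi)$ carries a K\"ahler class proportional to $\alpha\in\partial\dK_X$, which is absurd. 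At no point does the paper need any rational curve to survive a deformation. Your argument instead needs the \emph{positive} assertion that the extremal curve stays effective on the very general rank-one fibre $X_t$, and this is precisely the hard statement that is not available. The citation you give for it is inapplicable: \cite[Theorem 1.2]{Nobile}, as used in Lemma \ref{interproof}, says that a class represented by rational curves on a dense set of fibres remains \emph{effective on the limit fibre}---it is a specialization (closedness) statement, and it gives nothing about effectivity propagating \emph{from} the central fibre \emph{to} nearby fibres. Deformation-invariance of extremal rational curves is exactly the content of the Amerik--Verbitsky/Mongardi theory in the smooth case (via Ran's deformation theory of rational curves), and no orbifold analogue is established in this paper; the proofs of Theorem \ref{thm:defo-for-WD} and of this proposition are engineered specifically to avoid it. Neither of your fallback justifications repairs this: the cone theorem is a statement about a single projective variety and says nothing about curves deforming to nearby fibres, and Proposition \ref{prop:boundedqW}/Lemma \ref{lem:locfin} concern the wall structure, not effectivity of curve classes under deformation.

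A second, smaller gap: you assert that extremality of $R$ plus Lemma \ref{intercone} produces nef classes spanning $D^{\perp}\cap\overline{\dK_X}$, so that $D^{\perp}$ is a genuine facet. Convex-geometric extremality alone does not give this; the dual face can degenerate to lower dimension exactly when $R$ is a limit of other extremal rays. Ruling this out is where the paper does real work and where the hypotheses actually enter: projectivity is used via the cone theorem to show that negative extremal rays are isolated, while for $b_2(X)\geq 5$ the paper shows that each accumulating ray $R_i$ would be orthogonal to some wall divisor (by running the twistor contradiction argument for each boundary class $\alpha_i$) and then invokes the local finiteness of walls (Lemma \ref{lem:locfin}, resting on Proposition \ref{prop:boundedqW}). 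So the hypotheses are needed for the facet claim you glossed over, not---as your last paragraph suggests---for a curve-persistence statement which, in the paper's proof, is never used.
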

\begin{proof}
  Let $D\in \Q R^{\vee}$ primitive of negative square with $R$ an extremal ray. We claim that this implies that $D^\perp$
  cuts out a wall of $\mathcal{K}_X$, i.e.~$D^\perp \cap \del \mathcal{K}_X \subseteq D^\perp$ contains an open subset $V$ of $D^\perp$.  

  Assuming this, choose $\alpha \in V$ very general (i.e.~such that the only elements in $H^{1,1}(X,\bZ)$ which are orthogonal to
  $\alpha$ are collinear to $D$).
  Choose a marking $\phi\colon H^2(X,\bZ) \to \Lambda$ on $X$.
  Consider the 3-space $W\coloneqq \Vect_\bR( \phi(\alpha), \phi(\Rea \sigma), \phi(\Ima \sigma))$. By
  choice of $\alpha$ we have ${W}^\perp\cap \varphi(H^2(X,\bZ))=\Z\varphi(D)$. Therefore, $W$ gives rise to a generic twistor
  line $T_W$ in $\dD_N$, where $N\coloneqq \phi(D)^\perp$ as before. A priori $T_W$ is not coming from a twistor family, since $\alpha$ is not a
  Kähler class.

  However, as in (\ref{localtwistor}), locally around $(X,\phi)$, we can consider a family over some open 
  simply by using the local Torelli theorem (\cite[Theorem 3.17]{Menet-2020}). Let $(Y,
  \psi)$ be a very general element of this family (i.e.~an element with $\Pic(Y)=\Z\psi^{-1}\circ\phi(D)$.)

  Suppose, for the sake of contradiction, that $D$ is not a wall divisor. Then use Theorem \ref{thm:defo-for-WD} to
  deduce that $\psi^{-1}\circ\phi(D)$ is not a wall divisor either. Therefore, as in the proof of Theorem
  \ref{thm:defo-for-WD}, $\mathcal{C}_Y=\mathcal{K}_Y$.
  Note that $W\cap \psi(\Rea \sigma_Y)^\perp\cap \psi(\Ima \sigma_Y)^\perp \subseteq \psi(H^{1,1}(Y,\bR))$ is a
  positive 1-space and thus contains a Kähler form $\omega_Y$.
  By construction $\Vect_\bR( \psi(\omega_Y), \psi (\Rea \sigma_Y), \psi(\Ima \sigma_Y))=W$.

  Therefore, the twistor family $\cY$ associated to $\omega_Y$ surjects to the twistor line $T_W$
  associated to $W$. 
  In particular the fiber $(\cY_0, \psi_0)$ over $\mathscr{P}(X,\phi)$ is an irreducible symplectic orbifold, with
  Kähler class $\omega_{\cY_0}$, which satisfies
  \begin{equation*}
    \psi_0(\omega_{\cY_0})\in
    W\cap \psi_0(\Rea\sigma_{\cY_0})^\perp \cap \psi_0(\Ima\sigma_{\cY_0})^\perp
    =    W\cap \phi(\Rea\sigma)^\perp \cap \phi(\Ima\sigma)^\perp
    = \phi(\alpha) \cdot \bR \subseteq \phi(\del \mathcal{K}_X),
  \end{equation*}
  which is absurd.

  It remains to prove the claim. Since the Kähler cone is cut out by hyperplanes orthogonal to effective curves (see e.g.~Theorem \ref{Kählercriterion}) it
  suffices to prove that the extremal ray $R$ cannot be a limit of other extremal rays.

  Let us first prove the claim under the condition that $b_2(X)\geq 5$. Suppose, for the sake of contradiction, that $R$ is the
  limit of extremal rays $R_i$ for $i\to \infty$. Observe that, since all $R_i$ are extremal rays, there exist
  elements $0\neq \alpha_i \in \del \mathcal{K}_X \cap R_i^\perp$. We can choose these $\alpha_i$ such that they converge
  toward an element $0 \neq \alpha \in \del \mathcal{K}_X \cap R^\perp$. We will show that for each $\alpha_i$, there
  exists a wall divisor $D_i\in \cW_X$ with $D_i\perp\alpha_i$. This contradicts the locally finite structure of
  the walls from Lemma \ref{lem:locfin}. 
  Suppose, for the sake of contradiction, that there exists $i$ such that $\alpha_i$
  is not orthogonal to any wall divisor.
  Consider $W \coloneqq \Vect_\bR(\phi(\alpha_i),\phi(\Rea\sigma),\phi(\Ima \sigma))$ and repeat the first part
  of this proof to reach a contradiction (this is applicable since by construction $\phi^{-1}(W^\perp)$ does not contain
  any wall divisor).

    It remains to prove the claim when $b_2(X)=4$. If the Picard rank of $X$ is 0 or 1, there is nothing to prove. If $\rk \Pic X=2$, then the Mori cone has two extremal rays and the claim is automatically verified.
\end{proof}
 
\begin{defi}Given an irreducible symplectic orbifold $X$ endowed with
  a Kähler class $\omega$.
    Define $\cW_X^+\coloneqq \{D\in \cW_X\,|\, (D,\omega)_q>0\}$, i.e.~for every wall divisor, we choose the
  primitive representative intersecting (with respect to the BB form) positively any Kähler class.
\end{defi}
The following corollary gives another characterization of Kähler classes on irreducible symplectic orbifolds.
\begin{cor}\label{criterionwall}\label{cor:desrK}
  Let $X$ be an irreducible symplectic orbifold.
  Then
  $$
  \dK_X=\{\alpha\in \dC_X \,|\, (\alpha, D)_q>0\ \forall D\in \cW_X^+\}.$$
\end{cor}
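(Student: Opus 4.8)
The plan is to prove the two inclusions separately, the substantive content being that $\dK_X$ is precisely the chamber of the wall-divisor arrangement containing the K\"ahler cone. Throughout I use that $\dC_X\subseteq H^{1,1}(X,\bR)$ is a convex cone (one nappe of $\{q>0\}$ for the signature $(1,b_2(X)-3)$ form on $H^{1,1}(X,\bR)$) and that $\cW_X=\cW_X^+\cup(-\cW_X^+)$, so that the walls $\{D^\perp\mid D\in\cW_X\}$ are exactly the hyperplanes $\{D^\perp\mid D\in\cW_X^+\}$.

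For the inclusion ``$\subseteq$'' I argue by constant sign. For any $D\in\cW_X$, applying Definition \ref{walldefi} with $g=\id$ gives $D^\perp\cap\BK_X=\emptyset$, and since $\dK_X\subseteq\BK_X$, no K\"ahler class is orthogonal to $D$. Hence $\alpha\mapsto(\alpha,D)_q$ is continuous and nowhere vanishing on the connected set $\dK_X$, so it has constant sign there; as the chosen K\"ahler class $\omega$ satisfies $(\omega,D)_q>0$ for $D\in\cW_X^+$ by definition of $\cW_X^+$, I conclude $(\alpha,D)_q>0$ for every $\alpha\in\dK_X$ and every $D\in\cW_X^+$.

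For the reverse inclusion, fix $\alpha\in\dC_X$ with $(\alpha,D)_q>0$ for all $D\in\cW_X^+$ and consider the segment $\gamma(t):=(1-t)\omega+t\alpha$. By convexity $\gamma(t)\in\dC_X$ for all $t\in[0,1]$, and for every $D\in\cW_X$ (choosing the representative in $\cW_X^+$) bilinearity gives $(\gamma(t),D)_q=(1-t)(\omega,D)_q+t(\alpha,D)_q>0$, so $\gamma$ meets no wall $D^\perp$. It therefore suffices to show that $\dK_X$ is a full chamber of the arrangement, for then $\gamma$ remains in the chamber containing $\omega$ and $\alpha=\gamma(1)\in\dK_X$. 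The key claim is that $\partial\dK_X\cap\dC_X\subseteq\bigcup_{D\in\cW_X}D^\perp$; granting it, $\dK_X$ is open, avoids all walls (the $g=\id$ case above), and has its $\dC_X$-boundary on walls, hence is a chamber. To prove the claim, take $\beta\in\partial\dK_X\cap\dC_X$. Then $\beta\in\overline{\dK_X}\setminus\dK_X$, so by Theorem \ref{K�hlercriterion} there is a rational curve $C$ with $\beta\cdot C\le0$, while $\beta\in\overline{\dK_X}$ forces $\beta\cdot C\ge0$; thus $\beta\cdot C=0$ and the face $\beta^\perp\cap\NEbar$ of the Mori cone is nonzero. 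Selecting an extremal ray $R$ in this face, the relation $\beta\perp R$ together with $q(\beta)>0$ and the signature $(1,b_2(X)-3)$ of $H^{1,1}(X,\bR)$ forces $q(R^\vee)<0$. Proposition \ref{extremalray} then shows the primitive divisor proportional to $R^\vee$ is a wall divisor $D$, and $(\beta,D)_q=\beta\cdot R=0$ places $\beta$ on $D^\perp$.

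The main obstacle is this last claim, that is, extracting a wall divisor from an arbitrary boundary point $\beta$. The delicate steps are the existence of an extremal ray inside the exposed face $\beta^\perp\cap\NEbar$ --- supplied by the cone theorem when $X$ is projective, and in the case $b_2(X)\ge5$ by the local finiteness of the wall structure (Lemma \ref{lem:locfin}, together with the boundedness of Proposition \ref{prop:boundedqW}), which renders the negative part of the Mori cone locally polyhedral --- and the verification that Proposition \ref{extremalray} applies, i.e.\ that the ray has negative self-intersection. The latter is exactly what the signature computation guarantees, precisely because $\beta$ lies in the positive cone $\dC_X$.
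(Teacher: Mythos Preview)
Your proof is correct. The organization differs from the paper's: there, the nontrivial inclusion is argued directly by verifying the Boucksom criterion on each extremal ray $R$, splitting into the cases $q(R^\vee)\ge0$ (where $R^\vee\in\overline{\dC_X}$ and positivity follows from $\alpha\in\dC_X$) and $q(R^\vee)<0$ (where Proposition~\ref{extremalray} identifies $R^\vee$ with a positive multiple of an element of $\cW_X^+$). You instead show that $\partial\dK_X\cap\dC_X$ lies in the wall arrangement and then run a segment argument from a fixed K\"ahler class. Your route is a bit longer but makes the chamber geometry explicit and handles carefully the passage from ``positive on extremal rays'' to ``positive on all rational curves'', which the paper treats more tersely.

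One remark on your final paragraph: the concern about extracting an extremal ray from the exposed face $\beta^\perp\cap\NEbar$ is overstated. Since $\NEbar$ is a closed pointed cone in a finite-dimensional space (any K\"ahler class pairs strictly positively with every nonzero effective curve class, so $\NEbar$ contains no line), every nonzero face of $\NEbar$ contains an extremal ray of $\NEbar$ by the finite-dimensional Minkowski/Krein--Milman theorem for cones. Neither the cone theorem nor Lemma~\ref{lem:locfin} is required at this point; the hypothesis that $X$ be projective or satisfy $b_2(X)\ge5$ enters only through Proposition~\ref{extremalray}.
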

\begin{proof}
Let $\alpha\in \dC_X$ satisfy $(\alpha, D)_q>0$ for all $D\in \cW_X^+$. We need to show that $\alpha\in
\dK_X$. The other inclusion follows immediately from the definitions.
By Theorem \ref{Kählercriterion}, it is enough to show that $\alpha\cdot R>0$ for every extremal rays of the Mori cone of $X$. 

If $q(R^{\vee})<0$ this is true by Proposition \ref{extremalray}.

If on the other hand $q(R^{\vee})\geq 0$, then $R^\vee\in \overline{\dC_X}$ (since effective curves pair
positively with Kähler classes), and therefore $\alpha \in \dC_X$ implies that $0<(R^\vee,\alpha)_q=R\cdot
\alpha$.
%
\end{proof}
The following proposition will be useful in Sections \ref{Kawa} and \ref{mir}.
\begin{prop}\label{Kählertype}
Every general class $\alpha\in \mathcal{C}_X$ belongs to some Kähler-type chamber.
\end{prop}
\begin{proof}
We adapt the proof of \cite[Lemma 5.1]{Markman11}.
By Proposition \ref{oldprop} and \cite[Remark 3.24]{Menet-2020}, there exists $X'$ an irreducible symplectic orbifold and a cycle $\Gamma:=Z+\sum_iY_i$ in $X\times X'$ such that $Z$ defines a bimeromorphism under the following commutative diagram:
$$\begin{array}{cccccc}
&&Z&&\\
&\pi'\swarrow&&\searrow\pi&\\
&X'~-&\stackrel{f}{-}&\to~X,&
\end{array}$$
where $\pi:X\times X'\rightarrow X$ and $\pi':X\times X'\rightarrow X'$ are the projection.
Moreover the map $\left[\Gamma\right]_*:H^2(X,\Z)\rightarrow H^2(X',\Z)$ defined for all $\beta\in H^2(X',\Z)$ by $\left[\Gamma\right]_*(\beta)=\pi'_*(\left[\Gamma\right]\cdot\pi^*(\beta))$ is a parallel transport operator and $\left[\Gamma\right]_*(\alpha)$ is a Kähler class of $X'$. We set $g:=f^*\circ\left[\Gamma\right]_*$. By Proposition \ref{mainprop2}, $f^*\in \Mon^2_{Hdg}(X)$ and so $g\in \Mon^2_{Hdg}(X)$. Hence $g^{-1}(f^*(\mathcal{K}_{X'}))$ is a Kähler-type chamber. Since $\alpha\in g^{-1}(f^*(\mathcal{K}_{X'}))$, this conclude the proof.
\end{proof}
Using the results on wall divisors, we can derive the following more explicit formulation of Proposition \ref{Kählertype}.
\begin{cor}\label{cor:alphamovedtoK}
  Let $X$ be an irreducible symplectic orbifold and let
  $\alpha\in \dC_X$ be such that $(\alpha,D)_q\neq 0$ for all $D\in \cW_X$.
  Then $\alpha$ belongs to a Kähler-type chamber.
\end{cor}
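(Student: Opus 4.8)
The plan is to show that, away from the walls, the positive cone $\dC_X$ is exactly covered by the K\"ahler-type chambers, so that $\alpha$ lands in one of them. First I would rewrite the hypothesis. Since the set $\cW_X$ of primitive wall divisors is stable under $\MonHdg(X)$ (Definition \ref{walldefi}), for every $g\in\MonHdg(X)$ and every $D\in\cW_X$ the hyperplane $g(D^{\bot})=(g(D))^{\bot}$ equals $D'^{\bot}$ for some $D'\in\cW_X$; consequently
$$\bigcup_{g\in\MonHdg(X),\,D\in\cW_X} g(D^{\bot})=\bigcup_{D\in\cW_X}D^{\bot},$$
and the assumption $(\alpha,D)_q\neq0$ for all $D\in\cW_X$ is precisely the statement that $\alpha$ lies in none of these walls.

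The central step is to prove that every K\"ahler-type chamber $\mathcal{C}=g(f^{*}(\mathcal{K}_{X'}))$ is open in $\dC_X$ and that its topological boundary relative to $\dC_X$ is contained in $\bigcup_{D\in\cW_X}D^{\bot}$. Openness is immediate, as $\mathcal{K}_{X'}$ is open in $\dC_{X'}$ and $g\circ f^{*}$ is an isometry, hence a homeomorphism from $\dC_{X'}$ onto $\dC_X$. For the boundary I would first work on $X'$: if $\beta\in\dC_{X'}$ belongs to $\overline{\mathcal{K}_{X'}}\setminus\mathcal{K}_{X'}$, then approximating $\beta$ by classes of $\mathcal{K}_{X'}$ yields $(\beta,D)_q\geq0$ for all $D\in\cW_{X'}^{+}$, whereas Corollary \ref{criterionwall} (which applies to $X'$: by Proposition \ref{mainprop2} it is deformation equivalent to $X$, so $b_2(X')=b_2(X)$, and projectivity, being detected by a positive class in $\Pic$, is preserved by the Hodge isometry $f^{*}$ of Lemma \ref{isometry}) forces $(\beta,D_0)_q\leq0$ for some $D_0\in\cW_{X'}^{+}$; hence $(\beta,D_0)_q=0$ and $\beta\in D_0^{\bot}$. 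Thus the boundary of $\mathcal{K}_{X'}$ inside $\dC_{X'}$ lies in $\bigcup_{D\in\cW_{X'}}D^{\bot}$. Now $f^{*}$ is a parallel transport Hodge isometry (Proposition \ref{mainprop2}) and an isomorphism $\Pic(X')\to\Pic(X)$, so by Theorem \ref{thm:defo-for-WD} it maps $\cW_{X'}$ into $\cW_X$; combined with the $\MonHdg(X)$-invariance of $\cW_X$, applying the homeomorphism $g\circ f^{*}$ transports the containment to $\mathcal{C}$, giving $\partial\mathcal{C}\cap\dC_X\subseteq\bigcup_{D\in\cW_X}D^{\bot}$.

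Granting this, the conclusion follows from a connectedness argument. Let $\mathcal{U}$ be the connected component of $\dC_X\setminus\bigcup_{D\in\cW_X}D^{\bot}$ containing $\alpha$; it is open and nonempty. As $\cW_X\subseteq\Pic(X)$ is countable, the walls constitute a countable family of nowhere dense closed subsets, so the very general classes of Proposition \ref{K�hlertype} are dense, and I would choose one such class $\alpha_0\in\mathcal{U}$. By Proposition \ref{K�hlertype}, $\alpha_0$ lies in a K\"ahler-type chamber $\mathcal{C}$, so $\mathcal{U}\cap\mathcal{C}$ is open and nonempty. Since $\mathcal{U}$ meets no wall whereas $\partial\mathcal{C}\cap\dC_X$ is contained in the walls, we have $\mathcal{U}\cap\overline{\mathcal{C}}=\mathcal{U}\cap\mathcal{C}$, so $\mathcal{U}\cap\mathcal{C}$ is closed in $\mathcal{U}$ as well. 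By connectedness of $\mathcal{U}$ this forces $\mathcal{U}\cap\mathcal{C}=\mathcal{U}$, whence $\alpha\in\mathcal{U}\subseteq\mathcal{C}$, a K\"ahler-type chamber, as desired.

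I expect the main obstacle to be the boundary computation of the second paragraph: one must take all closures and boundaries relative to the positive cone $\dC_X$ rather than the ambient $H^{1,1}(X,\bR)$, and one must verify that the operator $g\circ f^{*}$ genuinely sends wall divisors of $X'$ to wall divisors of $X$, which is exactly where Theorem \ref{thm:defo-for-WD} and the invariance of $\cW_X$ under $\MonHdg(X)$ are used. The remaining steps are then a formal consequence of Proposition \ref{K�hlertype} together with the standard clopen argument.
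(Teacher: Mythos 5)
Most of your argument is correct and runs on the same ingredients as the paper's proof: Corollary \ref{cor:desrK} applied to the birational model $X'$ (your check that it applies -- equality of $b_2$ via Proposition \ref{mainprop2}, transfer of projectivity through the Hodge isometry -- is fine), Theorem \ref{thm:defo-for-WD} together with the $\MonHdg(X)$-invariance of $\cW_X$ to transport wall divisors, and the density of very general classes. Where the paper, after producing a very general class $\alpha'$ near $\alpha$, identifies its chamber explicitly as $\{\beta\in\dC_X \mid (\beta,D)_q>0 \text{ for all } D\in\cW_X \text{ with } (\alpha,D)_q>0\}$ and observes that $\alpha$ obviously lies in that set, you substitute a clopen/connectedness argument; that substitution is legitimate, and your computation $\partial\mathcal{C}\cap\dC_X\subseteq\bigcup_{D\in\cW_X}D^{\perp}$ is sound.

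The genuine gap is the unjustified sentence ``Let $\mathcal{U}$ be the connected component of $\dC_X\setminus\bigcup_{D\in\cW_X}D^{\perp}$ containing $\alpha$; it is open and nonempty.'' The set $\cW_X$ is countable, and a countable union of hyperplanes need not be closed: nothing you have established prevents the walls $D^{\perp}$ from accumulating at $\alpha$. The hypothesis $(\alpha,D)_q\neq 0$ for all $D\in\cW_X$ only says that $\alpha$ lies on no wall, not that a neighborhood of $\alpha$ misses every wall. If walls accumulate at $\alpha$, then $\mathcal{U}$ is not open and can even have empty interior (connected components of the complement of countably many hyperplanes can degenerate to a single ray), in which case the dense set of very general classes need not meet $\mathcal{U}$, and your clopen argument has no nonempty set $\mathcal{U}\cap\mathcal{C}$ to start from. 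Ruling out this accumulation is precisely the opening step of the paper's proof (a compactness argument on a limiting hyperplane, which then cannot meet any K\"ahler-type chamber), and it is also the content of Lemma \ref{lem:locfin}, which rests on the boundedness of the Beauville--Bogomolov squares of primitive wall divisors (Proposition \ref{prop:boundedqW}) and hence on integrality of wall divisors and the hypothesis $b_2(X)\geq 5$; this is the one place where that arithmetic input enters, and it cannot be waved through. Your proof becomes complete for $b_2(X)\geq 5$ once you invoke Lemma \ref{lem:locfin}: every point of $\dC_X$ has a neighborhood contained in a rational polyhedral cone inside $\overline{\dC_X}$, so the walls form a locally finite family, their union is closed in $\dC_X$, and the components of the complement are open. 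In the projective case with $b_2(X)<5$, which the statement also covers, that lemma is not available, and you would need to reproduce the paper's limiting argument instead.
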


\begin{proof}
  First observe that there exists an open neighborhood $U$ of $\alpha$ such that $U\cap D^\perp =\emptyset$
  for all wall divisors $D\in \cW_X$. Suppose, for the sake of contradiction, that $\alpha$ is the limit of a sequence $(\alpha_i)_i$, with 
  $\alpha_i\in D_i^\perp$ for some wall divisor $D_i$, for any $i$. Then one can find a subsequence of these hyperplanes, which
  is converging, resulting in a hyperplane $W\ni \alpha$. This implies that $W$  does not intersect any Kähler
  chamber, since the $D_i^\perp$ do not. Therefore, $W$ is by definition the orthogonal complement of some
  wall divisor $D$. However, this gives the desired contradiction to the choice of $\alpha$.

  Applying Proposition \ref{Kählertype}, we deduce that there exists $\alpha'\in U$ such that $\alpha'$ is in a Kähler
  type chamber, i.e. there exists $g \in \MonHdg(X)$ and $f\colon X\dashrightarrow X'$ such that
  $\alpha'\in g^{-1}f^*(\mathcal{K}_Y)$.
  We consider the set
  $\cW_\alpha^+ \coloneqq  \{D\in \cW_X \,|\, (\alpha,D)_q >0\}$  of wall divisors pairing positively with
  $\alpha$. 
  Note that for every wall divisor $D\in \cW_X$ either $D$ or $-D$ is in this set (since we assumed that
  $(\alpha,D)_q \neq 0$).
  Furthermore, by the choice of $\alpha'\in U$, the conditions $(\alpha,D)_q>0$ and
  $(\alpha',D)_q>0$ are equivalent for every $D\in \cW_X$.
  Therefore,  $g^{-1}f^*(\mathcal{K}_Y)= \{\beta \in \dC_X\,|\, (\beta,D)_q>0\ \forall D\in \cW_\alpha^+\}$ by Corollary \ref{cor:desrK}.
  Since this set obviously contains $\alpha$, this concludes the proof.
\end{proof}
%
The proposition below is a characterization of the birational Kähler
cone of an irreducible symplectic orbifold. 
\begin{prop}\label{bira}
Let $X$ be an irreducible symplectic orbifold. Then $\alpha \in H^{1,1}(X,\R)$ belongs to the closure $\overline{\BK}_X$ of the birational Kähler cone $\BK_X$ if and only if $\alpha \in \overline{\mathcal{C}}_X$ and $(\alpha,[D])_q \geq 0$ for every prime exceptional divisor $D \subset X$.
\end{prop}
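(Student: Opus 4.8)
The plan is to follow Huybrechts' argument for \cite[Proposition 4.2]{Huybrechts2003}, proving the two inclusions separately and feeding in the orbifold versions of his tools at the appropriate places. Throughout, $2n=\dim X$.

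For the inclusion $\overline{\BK}_X\subseteq \{\alpha\in\overline{\mathcal{C}}_X \mid (\alpha,[D])_q\geq 0 \text{ for all uniruled } D\}$, I would first observe that, since the conditions on the right are closed, it suffices to treat a class $\alpha=f^*\beta\in\BK_X$ with $\beta\in\mathcal{K}_{X'}$ and $f\colon X\dashrightarrow X'$ bimeromorphic. Such $\alpha$ lies in $\mathcal{C}_X$ because $f^*$ is an isometry (Lemma \ref{isometry}) carrying the positive cone of $X'$ to that of $X$, and $\mathcal{K}_{X'}\subseteq\mathcal{C}_{X'}$. Given a uniruled divisor $D\subset X$, Lemma \ref{biracodim} shows $f$ is an isomorphism in codimension $1$, so $D$ corresponds to a uniruled divisor $D'\subset X'$ with $(\alpha,[D])_q=(\beta,[D'])_q$ by Lemma \ref{isometry}. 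The Fujiki relation (\cite[Theorem 3.17]{Menet-2020}) identifies the sign of $(\beta,[D'])_q$ with that of $\int_{X'}\beta^{2n-1}\wedge[D']=\int_{D'}\beta^{2n-1}\geq 0$, whence $(\alpha,[D])_q\geq 0$. Passing to the closure gives the inclusion.

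For the reverse inclusion, by continuity it is enough to place every $\alpha\in\mathcal{C}_X$ satisfying $(\alpha,[D])_q>0$ for all uniruled $D$ inside $\BK_X$, the boundary then being recovered as a limit. Since the classes of uniruled divisors are countable, I would perturb $\alpha$ to a very general $\alpha_0\in\mathcal{C}_X$ in an arbitrarily small neighborhood and apply Proposition \ref{oldprop} to $\alpha_0$: this produces families $\mathcal{X},\mathcal{X}'\to S$ of irreducible symplectic orbifolds with $\mathcal{X}_0\simeq X$, isomorphic over $S\smallsetminus\{0\}$, and a bimeromorphism $F$ with $F^*\alpha_0$ a K\"ahler class on $\mathcal{X}'_0$. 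Writing $\Gamma=Z+\sum_iY_i\subseteq X\times\mathcal{X}'_0$ for the limiting correspondence as in the proof of Theorem \ref{main}, the component $Z$ is the graph of a genuine bimeromorphism $g\colon X\dashrightarrow\mathcal{X}'_0$ by \cite[Remark 3.24]{Menet-2020}, and $F^*\alpha_0=[\Gamma]_*\alpha_0=[Z]_*\alpha_0+\sum_i[Y_i]_*\alpha_0$.

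The hard part will be showing that the correction terms $\sum_i[Y_i]_*\alpha_0$ vanish, so that $[Z]_*\alpha_0=F^*\alpha_0$ is K\"ahler on $\mathcal{X}'_0$ and hence $\alpha_0=g^*(F^*\alpha_0)\in\BK_X$. Exactly as in Huybrechts' proof, the finitely many components $Y_i$ whose projection to $X$ is divisorial sweep out uniruled divisors, being covered by the rational curves contracted in the degeneration; the strict positivity $(\alpha_0,[\pi(Y_i)])_q>0$ (which holds for $\alpha_0$ sufficiently close to $\alpha$, as only these finitely many, family-dependent divisors matter) together with the analysis of the exceptional divisors of the resolution via Lemmas \ref{orbiornotorbi} and \ref{biracodim} forces the corresponding coefficients to be zero. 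This yields $\alpha_0\in\BK_X$, and letting $\alpha_0\to\alpha$ gives $\alpha\in\overline{\BK}_X$. I expect the only genuinely new point over the smooth case to be the bookkeeping of the (possibly orbifold) exceptional divisors, which is precisely what Lemma \ref{orbiornotorbi} is designed to control.
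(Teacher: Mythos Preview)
Your proposal is correct and follows exactly the route the paper indicates: the paper's proof consists of the single sentence ``Using Lemma \ref{isometry} and Proposition \ref{oldprop}, the following proposition can be proved exactly as \cite[Proposition 4.2]{Huybrechts2003}'', and you have spelled out precisely this argument, invoking Lemmas \ref{biracodim}, \ref{isometry}, \ref{orbiornotorbi} and Proposition \ref{oldprop} at the same places Huybrechts uses their smooth analogues.
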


\begin{proof}
 We claim that a class $\alpha \in H^{1,1}(X,\R)$ lies in the closure $\overline{\BK}_X$ of the birational Kähler cone $\BK_X$ if and only if $\alpha \in \overline{\mathcal{C}}_X$ and $(\alpha,[D])_q \geq 0$ for all uniruled divisors $D \subset X$.The proof of this claim follows \cite[Proposition 4.2]{Huybrechts2003}, and make use of Lemma \ref{isometry} and Proposition \ref{oldprop}.

However, if $D'$ is a uniruled prime divisor with $q(D') \geq 0$, then $(\alpha,[D'])_q \geq 0$ holds automatically for all $\alpha \in \overline{\mathcal{C}}_X$. Thus, if $\alpha \in \overline{\mathcal{C}}_X$ and $(\alpha,[D])_q \geq 0$ for all prime exceptional divisors $D \subset X$, it follows that $\alpha \in \overline{\BK}_X$.

Conversely, suppose $\alpha \in \overline{\BK}_X$ and let $D$ be a prime exceptional divisor. Assume for contradiction that $(\alpha,[D])_q < 0$. Then there exists $\beta \in \BK_X$ such that $(\beta,[D])_q < 0$, contradicting Lemma \ref{biracodim}. So $(\alpha,[D])_q \geq 0$.
\end{proof}

\subsection{Some results when $b_2(X)=4$}\label{Kawa}

Let $X$ be an irreducible symplectic orbifold. In this section we provide some results that are missing in the case $b_2(X)=4$. 
Note that an example of irreducible symplectic orbifold with $b_2(X)=4$ have been provided in \cite[Theorem 1.11]{Lol6}.

For an element $\alpha\in H^2(X,\Z)$, we set the reflection $R_{\alpha}(x)=x-2\frac{(\alpha,x)_q}{q(\alpha)}\alpha$.
We also set:
$$\Mon^2_{Bir}(X):=\left\{\left.g^*\right|\ g:X\dashrightarrow X\ \text{is a bimeromorphism}\right\}.$$


\begin{lemme}\label{simplechambers}
Let $X$ be an irreducible symplectic orbifold with $b_2(X)=4$. We assume that $\mathcal{C}_X$ contains finitely many birational Kähler-type chambers. Then:
\begin{itemize}
\item[(i)] $\mathcal{C}_X$ contains at most two birational Kähler-type chambers;
\item[(ii)] Let $\rho: \Mon^2_{Hdg}(X) \rightarrow O(\NS(X))$ be the restriction morphism. If $\mathcal{C}_X$ contains two birational Kähler-type chambers, then there exists $B \in \cW_X^+$ such that $\rho(\Mon^2_{Hdg}(X)) = \left\langle \rho(R_{B}) \right\rangle$; otherwise, $\Mon^2_{Hdg}(X) = \Mon^2_{Bir}(X)$.
\end{itemize}
\end{lemme}

\begin{proof}
We may assume without loss of generality that $\operatorname{rk}\NS X = 2$. Let $\ell_1, \ell_2 \in H^{1,1}(X,\R)$ satisfy $q(\ell_1) = q(\ell_2) = 0$, such that $\R_{\geq 0} \ell_1$ and $\R_{\geq 0} \ell_2$ are the extremal rays of $\mathcal{C}_X$.

By Corollary \ref{cor:alphamovedtoK}, there exists a birational Kähler-type chamber with $\R_{\geq 0} \ell_1$ as an extremal ray. Therefore, every such chamber must have one of its extremal rays generated by an element of square zero. Hence, all these chambers must share either $\R_{\geq 0} \ell_1$ or $\R_{\geq 0} \ell_2$ as an extremal ray, so there are at most two birational Kähler-type chambers.

Now suppose $\mathcal{C}_X$ contains two birational Kähler-type chambers. Then $\overline{\BK}_X$ must have an extremal ray orthogonal to some $B \in \cW_X^+$. We aim to show that $\rho(\Mon^2_{Hdg}(X)) = \left\langle \rho(R_B) \right\rangle$.
Let $g \in \rho(\Mon^2_{Hdg}(X))$. Then necessarily $g(B) = \pm B$.
If $g(B) = B$, then $g(\ell_1) = \lambda \ell_1$ for some $\lambda \in \R^*_+$, since $g$ is an orientation-preserving isometry. Comparing $(B, \ell_1)_q = (g(B), g(\ell_1))_q$ yields $\lambda = 1$, hence $g = \id$.
If $g(B) = -B$, then $g(\ell_1) = \lambda \ell_2$ with $\lambda \in \R^*_+$ (again by orientation preservation). Similarly, $R_B(\ell_1) = \mu \ell_2$ for some $\mu \in \R^*_+$. By comparing scalar products as before, we get $\mu = \lambda$ and $g = R_{B|\NS(X)}$.

Finally, if $\mathcal{C}_X$ contains only one birational Kähler-type chamber, then $\mathcal{C}_X = \overline{\BK}_X$, and Theorem \ref{main}(ii) implies that $\Mon^2_{Hdg}(X) = \Mon^2_{Bir}(X)$.
\end{proof}
\begin{lemme}\label{infiteray}
Let $X$ be an irreducible symplectic orbifold with $b_2(X)=4$ and $\rk\NS(X)=2$. Assume that $\mathcal{C}_X$ contains infinitely many birational Kähler-type chambers. Then:
\begin{itemize}
  \item[(i)]  The two extremal rays of $\overline{\BK}_X$ are given by rays lying respectively in $D_1^\perp$ and $D_2^\perp$, for two prime exceptional divisors $D_1,D_2\subset X$.
  \item[(ii)] The birational Kähler-type chambers accumulate at the extremal rays of $\mathcal{C}_X$.
  \item[(iii)] Both extremal rays of $\mathcal{C}_X$ are irrational.
\end{itemize}
\end{lemme}

\begin{proof}
First, by \cite[Theorem 1.2]{Menet-2020}, $\NS(X)$ contains a class of positive Beauville--Bogomolov square, so $X$ is projective.

We claim there exists $N>0$ such that $q(D)>-N$ for every prime exceptional divisor $D$. Otherwise we could find prime exceptional divisors $D_n$ with $q(D_n)\to -\infty$. By \cite[Theorem 3.10]{Lehn2}, each reflection $R_{D_n}\in\Mon^2_{Hdg}(X)$ is an integral isometry, forcing the divisibility $\div(D_n)\to\infty$, which is impossible.

None of the extremal rays of $\overline{\BK}_X$ can coincide with those of $\mathcal{C}_X$, for otherwise the proof of Lemma \ref{simplechambers} would imply there are at most two birational Kähler-type chambers. Moreover, by \cite[Proposition 2.2]{MarkmanYoshioka14} and the preceding claim, no hyperplane $D^\perp$ (with $D$ a prime exceptional divisor) can accumulate along the boundary of $\overline{\BK}_X$. Hence, Proposition \ref{bira} shows that the two extremal rays of $\overline{\BK}_X$ are precisely two rays contained in $D_1^\perp$ and $D_2^\perp$, where $D_1$ and $D_2$ are distinct prime exceptional divisors. This proves (i).

Since $\mathcal{C}_X$ contains infinitely many birational Kähler-type chambers, there are infinitely many hyperplanes $g(D_i)^\perp$ (for $i\in\{1,2\}$ and $g\in\Mon^2_{Hdg}(X)$) cutting through $\mathcal{C}_X$. Again by \cite[Proposition 2.2]{MarkmanYoshioka14}, these hyperplanes can only accumulate at an irrational extremal ray of the positive cone. It follows that both extremal rays of $\mathcal{C}_X$ are irrational, because if only one extremal ray was irrational $\rho(\Mon^2_{Hdg}(X))$ would be trivial, contradicting the existence of infinitely many birational Kähler-type chambers. Furthermore, the same result ensures that the hyperplanes $g(D_i)^\perp$ accumulate exactly at those extremal rays, and hence the birational Kähler-type chambers accumulate there as well. This establishes (ii) and (iii).
\end{proof}
In \cite{Lehn2}, Lehn, Mongardi and Pacienza prove the Morrison--Kawamata cone conjecture for $X$ a $\Q$-factorial projective primitive symplectic variety with terminal singularities and $b_2(X)\geq5$. In this section, we propose a proof of the birational version of the Morrison--Kawamata cone conjecture for $X$ a projective irreducible symplectic orbifold with $b_2(X)=4$ using our previous results.
\begin{prop}\label{finiteindex}
Let $X$ be an irreducible symplectic orbifold. Then $\Mon^2(X)$ is of finite index in $O^+(H^2(X,\Z))$.
\end{prop}
\begin{proof}
This result has been proved in \cite[Theorem 1.1 (2)]{Bakker-Lehn-GlobalTorelli} when $b_2(X)\geq5$. So it remains to show the result for $b_2(X)=4$. 
We set $\Lambda:= H^2(X,\Z)$ and: $$\mathcal{M}_X:=\left\{\left.(Y,\psi)\in\mathcal{M}_{\Lambda}\right|\ Y\ \text{is deformation equivalent to}\ X\right\}.$$ We denote by $\overline{\mathcal{M}}_X$ the Hausdorff reduction of $\mathcal{M}_X$ (see Section \ref{GTT}). We set $\mathscr{P}_X: \overline{\mathcal{M}}_X\rightarrow \mathcal{D}_{\Lambda}$ the period map.
Let $p\in\mathcal{D}_{\Lambda}$ such that $p^{\bot}\cap \Lambda=\Z \alpha$ with $\alpha^2>0$.
Step 1 of the proof of Theorem 8.2 in \cite{Bakker-Lehn-GlobalTorelli} does not require that $b_2(X)\geq5$; so the statement is also true in our case and we have that
$\mathscr{P}_X^{-1}(p)$ is finite. Therefore by Theorem \ref{mainGTTO}, $\overline{\mathcal{M}}_X$ has a finite number of connected components.
By definition of $\mathcal{M}_{\Lambda}$, there is a bijection between $O^+(\Lambda)/\Mon^2(X)$ and the set of connected components of $\overline{\mathcal{M}}_X$. It concludes the proof.
\end{proof}
We recall the main notation needed to state the Morrison--Kawamata cone conjecture (see for instance \cite[Section 6.4]{Markman11}).
\begin{defi}
Let $X$ be a projective irreducible symplectic orbifold. 
\begin{itemize}
\item[(i)]
A line bundle $L$ on $X$ is \emph{movable}, if the base locus of the linear system $|L|$ has codimension $\geq2$.
\item[(ii)]
The \emph{movable cone} $\Mov(X)$ is the cone in $\NS(X)\otimes \R$ spanned by the classes of movable line bundles.
\end{itemize}
\end{defi}
We set $T:=\Ima \rho$ and  $T_{bir}:=\Mon^2_{Bir}(X)/\Ker\rho$, with $\rho:\Mon^2_{Hdg}(X)\rightarrow O(\NS(X))$ the restriction morphism. 
Finally, we define $\overline{\Mov}^+(X)$ as the convex hull of $\overline{\Mov}(X)\cap(\NS(X)_{\Q})$ in $\NS(X)_{\R}$.

We also recall the definition of a fundamental domain.
\begin{defi}
Let $V$ be a vector space. Let $C\subset V$ be a cone. Let $\Gamma$ be a subgroup of $\GL(V)$ acting on $C$. Let $\Pi \subset C$ be a rational polyhedral cone. We say that $\Pi$ is a \emph{fundamental domain} for the action of $\Gamma$ on $C$ if :
\begin{itemize}
\item
$C=\bigcup_{g\in\Gamma}g(\Pi)$;
\item
for each $g\in\Gamma$, either $g(\Pi)=\Pi$ or $\Pi^{o}\cap g(\Pi)^{o}=\emptyset$, where $^o$ refers to the interior.
\end{itemize}
\end{defi}

\begin{prop}
Let $X$ be a projective irreducible symplectic orbifold. There exists a fundamental domain $\Pi$ for the action of $T_{bir}$ on $\overline{\Mov}^+(X)$.
\end{prop}
\begin{proof}
This result has been proved in \cite[Theorem 6.4]{Lehn2} when $b_2(X)\geq5$, so in this proof we assume that $b_2(X)=4$.
Moreover if $\rk\NS(X)=1$ there is nothing to prove, so we assume that $\rk\NS(X)=2$. In this case $\NS(X)\otimes \R=H^{1,1}(X,\R)$, so $\Amp(X)=\mathcal{K}_X$, where $\Amp(X)$ is the ample cone.
So we can prove that: 
\begin{equation}
\overline{\Mov}(X)=\overline{\mathcal{B}\mathcal{K}}_X. 
\label{MovBK}
\end{equation}
Indeed, by Lemma \ref{biracodim}, we have $\overline{\Mov}(X)\supset\overline{\mathcal{B}\mathcal{K}}_X$. We denote by $\Mov(X)^{o}$ the interior of the movable cone. Let $D\in \Mov(X)^{o}$; by \cite[Lemma 4.6]{Lehn2}, we have $(D,D')_q>0$ for any prime exceptional divisor $D'$.
Therefore, by Proposition \ref{bira}, we have $\overline{\Mov}(X)\subset\overline{\mathcal{B}\mathcal{K}}_X$.

Now, we can conclude using Lemmas \ref{simplechambers}, \ref{infiteray} and the same idea as explained in \cite[Proof of Theorem 6.25]{Markman11}. Let $x_0\in\Mov(X)^o$ and $\mathcal{C}^+_X$ be the convex hull of $\overline{\mathcal{C}}_X\cap(\NS(X)_{\Q})$ in $\NS(X)_{\R}$.
We set $$\Pi:=\left\{\left.x\in \mathcal{C}^+_X\right|\ (x_0,x)_q\leq(x_0,g(x))_q,\ \forall\ g\in T\right\}.$$
With the same argument as in \cite[Proof of Theorem 6.25]{Markman11} and Proposition \ref{finiteindex}, we have that $\Pi$ is a fundamental domain for the action of $T$ on $\mathcal{C}^+_X$.

If $\mathcal{C}_X$ contains finitely many birational Kähler-type chambers, according to Lemma \ref{simplechambers},
there are two cases $\mathcal{C}_X$ contains two or one birational Kähler-type chambers.
If $\mathcal{C}_X$ contains only one birational Kähler-type chambers, then $\mathcal{B}\mathcal{K}_X=\mathcal{C}_X$ and $T=T_{bir}$.
So we conclude with (\ref{MovBK}).
If $\mathcal{C}_X$ contains two birational Kähler-type chambers, we know by Lemma \ref{simplechambers} (ii) that $T=\left\langle R_B\right\rangle$ and $T_{bir}=\left\langle \id\right\rangle$, with $B\in\cW_X^+$. So using (\ref{MovBK}), it remains to show that $\Pi\subset\overline{\mathcal{B}\mathcal{K}}_X$. We recall from the proof of Lemma \ref{simplechambers} that an extremal ray of $\overline{\mathcal{B}\mathcal{K}_X}$ is contained in $B^{\bot}$ and the other extremal ray of $\overline{\mathcal{B}\mathcal{K}_X}$ coincides with an extremal ray of $\mathcal{C}_X$. Let $x\in\Pi$; therefore, we have to verify that $(x,B)_q\geq0$. We have $(x_0,x)_q\leq (x_0,R_B(x))_q$. That is $(x_0,x)_q\leq (x_0,x)_q-2\frac{(B,x)_q}{q(B)}(x_0,B)_q$. Since $(x_0,B)_q>0$ and $q(B)<0$, we have $(B,x)_q\geq0$.

If $\mathcal{C}_X$ contains infinitely many birational Kähler-type chambers, then by Lemma \ref{infiteray} the two extremal rays of $\overline{\BK}_X$ are precisely the rays lying in $D_1^\perp$ and $D_2^\perp$, where $D_1$ and $D_2$ are prime exceptional divisors.  Repeating the previous argument, the inequality
\[
(x_0,x)_q \;\le\;(x_0,R_{D_i}(x))_q
\]
implies $(D_i,x)_q\ge0$, and hence $\Pi\subset\overline{\BK}_X$.  Moreover, if $g\in T$ satisfies $g(\Pi)\cap\BK_X\neq\emptyset$, then Theorem \ref{main}(ii) forces $g\in T_{bir}$, so in fact $g(\Pi)\subset\overline{\BK}_X$.  Since $\Pi$ is already a fundamental domain for the action of $T$ on $\mathcal{C}_X^+$, it follows that $\Pi$ is also a fundamental domain for the action of $T_{bir}$ on $\overline{\Mov}(X)$.
\end{proof}


\section{Application to mirror symmetry}\label{mir}
\subsection{Motivation and some previous results}
In this section, we propose a definition of the mirror symmetry for an irreducible symplectic orbifold endowed with a Kähler class. The main idea of mirror symmetry is to exchange the holomorphic
2-form and the Kähler metric. Several definitions of mirror symmetry are possible.
We follow the one chosen by Huybrechts in the framework of hyperkähler manifolds \cite[Section
  6.4]{Huybrechts2}, which in turn is inspired by Aspinwall and Morrison \cite{Aspinwall}. The definition of
Huybrechts is given by an involution on a period subdomain. When Huybrechts proposed his definition, the
knowledge on the Kähler cone for irreducible symplectic manifolds was not yet developed sufficiently to deduce
the mirror symmetry for these manifolds in general. 
Building on the results on the Kähler cone (Section \ref{sec:wall-divs}), we are able to complete this missing step in the orbifold setting (a fortiori in the smooth setting).
More precisely, via the period map, we provide an isomorphism between the period domain
$\widetilde{\mathcal{D}_\Lambda}$ and a moduli space  $\widetilde{\mathcal{M}_{\Lambda}}$ of irreducible
symplectic orbifolds endowed with a Kähler class and some additional data (Theorem \ref{TorelliMirror}); it allows us to extend Huybrechts' definition to a subspace of $\widetilde{\mathcal{M}_{\Lambda}}$ in Section \ref{dms}.


Another approach to mirror symmetry for K3 surfaces was studied by \cite{Dolgachev}, who provides a mirror symmetry between moduli spaces of lattice-polarized
K3 surfaces (see Definition \ref{polarized}).  Dolgachev's definition has been inspired by mathematical
\cite{Aspinwall}, \cite{Giveon}, \cite{Martinec}, \cite{Roan}, \cite{Voisin} and physical literature
\cite{Borcea}, \cite{Kobayashi}, \cite{Martinec}.  The results of Dolgachev for K3 surfaces have been generalized by Camere in \cite{Camere-2016} to hyperkähler
manifolds.
We will prove in Proposition \ref{polarized} that Huybrechts definition coincides with
the one of Dolgachev and Camere.

In addition, our work is a generalization of \cite[Section 4.1 and 4.2]{Franco-Jardim-Menet}, where a weaker
version of Theorem \ref{TorelliMirror} was obtained in the smooth case.

Section \ref{GTTOK} is devoted to the proof of Theorem \ref{TorelliMirror}, which uses our previous results on the global Torelli theorem (Theorem \ref{main}) and the Kähler cone (Section \ref{sec:wall-divs}). In Section \ref{dms}, we discuss the definition of mirror symmetry obtained by combining Theorem \ref{TorelliMirror} and Huybrechts' definition \cite[Section 6.4]{Huybrechts2}.

For simplicity of the equations in this section, we denote the \emph{Beauville--Bogomolov form} simply by the dot "$\cdot$".

\subsection{Global Torelli theorem for marked irreducible symplectic orbifolds endowed with a Kähler
  class}\label{GTTOK}

Let $\Lambda$ be a lattice of signature $(3,\rk \Lambda-3)$.  For a marked irreducible symplectic
orbifold  $(X,\varphi)$, we denote by $\left[X,\varphi\right]$ its class of isomorphism.  Fix a connected
component $\mathcal{M}_\Lambda^{\circ}$ of the moduli space of marked irreducible symplectic orbifolds of
Beauville--Bogomolov lattice $\Lambda$.

In this section, we consider quadruplets $(X,\varphi,\sigma_X,\omega_X,\beta),$
where $(X,\varphi)$ is a marked irreducible symplectic orbifold, $\sigma_X\in H^{2,0}(X)$, $\omega_X\in\mathcal{K}_X$ and $\beta\in H^2(X,\R)$. The class $\beta$ is called a \emph{$B$-field}. 
We define
$$\widetilde{\mathcal{M}_\Lambda}:=\left\{\left.(X,\varphi,\sigma_X,\omega_X,\beta)\ \right|\ \left[X,\varphi\right]\in\mathcal{M}_\Lambda^{\circ},\ 0
\neq \sigma_X\in H^{2,0}(X),\  \omega_X\in\mathcal{K}_X,\ \beta\in H^2(X,\R)\right\}_{\diagup\sim},$$
where $\sim$ is the identification of isomorphic objects, i.e.~$(X,\varphi,\sigma_X,\omega_X,\beta)\sim (X',\varphi',\sigma_{X'},\omega_{X'},\beta')$
if and only if there exists an isomorphism $f:X\rightarrow X'$ such that $f^*=\varphi^{-1}\circ\varphi'$,
$f^*(\sigma_{X'})=\sigma_{X}$, $f^*(\omega_{X'})=\omega_{X}$, and $f^*(\beta')=\beta$. We denote by
$\cW_\Lambda\subset \Lambda$ the set from Definition \ref{def:setW}, which gives the wall divisors of orbifolds in $\mathcal{M}_\Lambda^{\circ}$ via the marking.

\begin{rmk}
The set $\widetilde{\mathcal{M}_\Lambda}$ is endowed with a structure of differential manifold, obtained as an open submanifold of 
$$\doublewidetilde{\mathcal{M}_\Lambda}:=\left\{\left.(X,\varphi,\sigma_X,\omega_X,\beta)\ \right|\ \left[X,\varphi\right]\in\mathcal{M}_\Lambda^{\circ},\ 0\neq \sigma_X\in
H^{2,0}(X),\  \omega_X\in H^{1,1}(X,\R),\ \beta\in H^2(X,\R)\right\}_{\diagup\sim},$$
where $\sim$ is identification of isomorphic objects, as before.
The structure of differential manifold of $\doublewidetilde{\mathcal{M}_\Lambda}$ is given by the
period map, which is locally a bijection, by the local Torelli Theorem \cite[Theorem 3.17]{Menet-2020}: 
$$\xymatrix@R0cm@C0.5cm{\ \ \ \ \ \ \ \ {\doublewidetilde{\mathscr{P}}}:&
  {\doublewidetilde{\mathcal{M}_\Lambda}}\ar[r]& \dG_\Lambda
  \subseteq (\Lambda \otimes \bC)^2\\
&(X,\varphi,\sigma_X,\omega_X,\beta)\ar[r]&\left(\varphi(\sigma_X),\varphi(\beta+i\omega_X)\right),}$$
with 
\begin{equation}
\dG_\Lambda:=\left\{\left.(\alpha,\beta+ix)\in
(\Lambda\otimes\C)^2\ \right|\ \alpha^2=0,\ \alpha\cdot\overline{\alpha}>0,\ \alpha\cdot x=0,\  x ^2>0
\right\}.
\label{last}
\end{equation}
In \cite[Section 4.4]{Huybrechts2}, the space considered is slightly different: 
$$\Gr_{2,1}^{po}(\Lambda_{\R})\times \Lambda_{\R}:=\left\{\left.(\alpha,\beta+ix)\in
\mathbb{P}(\Lambda\otimes\C)\times\Lambda\otimes\C \ \right|\ \alpha^2=0,\ \alpha\cdot\overline{\alpha}>0,\ \alpha\cdot x=0,\  x ^2>0
\right\};$$
it can be obtained as a quotient of $\dG_\Lambda$.
\end{rmk}

\begin{rmk}
If we choose to restrict to the case where $\beta\in H^{1,1}(X,\R)$, then 
$$\doublewidetilde{\mathcal{M}_\Lambda}\,^{1,1}:=\left\{\left.(X,\varphi,\sigma_X,\omega_X,\beta)\ \right|\ \left[X,\varphi\right]\in\mathcal{M}_\Lambda^{\circ},\ 0\neq\sigma_X\in H^{2,0}(X),\  \omega_X\in H^{1,1}(X,\R),\ \beta\in H^{1,1}(X,\R)\right\}_{\diagup\sim}$$
can be endowed with the structure of a complex manifold, inherited from the period map (compare \cite[Chapter 1 Section 2]{Magnusson}).
\end{rmk}
We can generalize the global Torelli Theorem \cite[Theorem 1.1]{Menet-2020} to $\widetilde{\mathcal{M}_\Lambda}$.
\begin{defi} Define the following generalized period domain: $$\widetilde{\mathcal{D}_\Lambda}:=\left\{\left.(\alpha,\beta+ix)\in
  (\Lambda\otimes\C)^2\ \right|\ \alpha^2=0,\ \alpha\cdot\overline{\alpha}>0,\ x^2>0,\ \alpha\cdot
  x=0,\ \alpha^\bot\cap x^\bot\cap \cW_\Lambda=\emptyset\right\} \subseteq \dG_\Lambda.$$ 
\end{defi}
\begin{thm}\label{TorelliMirror} Assume that $\mathcal{M}_\Lambda^{\circ}$ is non-empty.
The period domain $\widetilde{\mathcal{D}_\Lambda}$ has two connected components
$\widetilde{\mathcal{D}}_1$ and $\widetilde{\mathcal{D}}_2$, and there exists $i\in \left\{1,2\right\}$ such that the period map
$$\xymatrix@R0cm@C0.5cm{\ \ \ \ \ \ \ \ \widetilde{\mathscr{P}}:& \widetilde{\mathcal{M}_{\Lambda}}\ar[r]& \widetilde{\mathcal{D}}_i\\
&(X,\varphi,\sigma_X,\omega_X,\beta)\ar[r]&\left(\varphi(\sigma_X),\varphi(\beta+i\omega_X)\right),}$$
is an isomorphism.
\end{thm}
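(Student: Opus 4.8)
The plan is to show that $\widetilde{\mathscr{P}}$ is a bijective local diffeomorphism onto exactly one of the two connected components of $\widetilde{\mathcal{D}_\Lambda}$, and then use that such a map is an isomorphism of differentiable manifolds. I would organize the argument as: (a) count the components of $\widetilde{\mathcal{D}_\Lambda}$; (b) check $\widetilde{\mathscr{P}}$ is a well-defined local diffeomorphism landing in one component; (c) prove injectivity via the Hodge-theoretic Torelli theorem; (d) prove surjectivity via surjectivity of the period map together with the wall-divisor theory.

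For the components, the $B$-field coordinate $\beta$ ranges freely over $\Lambda\otimes\R$, a contractible factor, and the internal choices contribute further connected factors, so $\pi_0(\widetilde{\mathcal{D}_\Lambda})$ is detected by the pairs $(\alpha,x)$. The conditions $\alpha^2=0$, $\alpha\cdot\overline{\alpha}>0$ single out an oriented positive $2$-plane $P_\alpha=\Vect_\R(\Rea\alpha,\Ima\alpha)$, and $x$ then ranges over the positive cone of the signature $(1,\rk\Lambda-3)$ space $P_\alpha^\perp$, which has two halves; a choice of half is the same as an orientation of the positive $3$-space $W=\Vect_\R(\Rea\alpha,\Ima\alpha,x)$. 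The space of oriented positive $3$-spaces of $\Lambda_\R$ has exactly two connected components (the Grassmannian of positive $3$-spaces is connected, as one checks on the homogeneous space $O(3,\rk\Lambda-3)/(O(3)\times O(\rk\Lambda-3))$, and its tautological bundle is orientable), so $\{(\alpha,x)\}$ has two components; this is exactly the structure established in the smooth case in \cite[Section 6.4]{Huybrechts2} and \cite{Franco-Jardim-Menet}. Finally, the locus deleted by the wall condition $\alpha^\perp\cap x^\perp\cap\cW_\Lambda=\emptyset$ is a countable union of subsets of real codimension at least two, so it does not change $\pi_0$; hence $\widetilde{\mathcal{D}_\Lambda}=\widetilde{\mathcal{D}}_1\sqcup\widetilde{\mathcal{D}}_2$.

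Next I would verify that $\widetilde{\mathscr{P}}$ takes values in $\widetilde{\mathcal{D}_\Lambda}$ and lands in one component. For $(X,\varphi,\sigma_X,\omega_X,\beta)$, the class $\varphi(\sigma_X)$ lies in $\mathcal{D}_\Lambda$; the Kähler class $\omega_X$ is of type $(1,1)$, giving $\alpha\cdot x=0$, and satisfies $q(\omega_X)>0$; and the elements of $\cW_\Lambda$ orthogonal to $\alpha$ correspond via $\varphi$ exactly to the wall divisors of $X$ (Corollary \ref{wall}), each of which pairs non-trivially with $\omega_X$ directly from Definition \ref{walldefi} (as $\mathcal{K}_X\subseteq\BK_X$ and $D^\perp\cap\BK_X=\emptyset$), so $\alpha^\perp\cap x^\perp\cap\cW_\Lambda=\emptyset$. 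Since Kähler classes lie in the positive cone $\mathcal{C}_X$, whose choice of half varies continuously over the connected $\mathcal{M}_\Lambda^\circ$ and is precisely the locally constant datum distinguishing $\widetilde{\mathcal{D}}_1$ from $\widetilde{\mathcal{D}}_2$, the image lies in a single component $\widetilde{\mathcal{D}}_i$. The map is a local diffeomorphism because $\widetilde{\mathcal{M}_\Lambda}$ is the open subset of $\doublewidetilde{\mathcal{M}_\Lambda}$ cut out by the open condition $\omega_X\in\mathcal{K}_X$, on which $\doublewidetilde{\mathscr{P}}$ is a local isomorphism by the local Torelli theorem (\cite[Theorem 3.17]{Menet-2020}). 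Injectivity then follows from Theorem \ref{main}(ii): if two points have equal image, the induced isometry $\psi=\varphi'^{-1}\circ\varphi$ is a parallel transport operator (both points lie over $\mathcal{M}_\Lambda^\circ$) and a Hodge isometry (the periods agree) sending the Kähler class $\omega_X$ to the Kähler class $\omega_{X'}$, hence is realized by an isomorphism $f\colon X\to X'$; matching the remaining \emph{affine} data $\sigma$, $\omega$, $\beta$ (recorded in $(\Lambda\otimes\C)^2$, not projectively) identifies the two points in $\widetilde{\mathcal{M}_\Lambda}$.

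The main obstacle is surjectivity onto $\widetilde{\mathcal{D}}_i$, and this is exactly where the wall-divisor results are indispensable. Given $(\alpha,\beta+ix)\in\widetilde{\mathcal{D}}_i$, surjectivity of the period map (Theorem \ref{mainGTTO}) yields a marked orbifold $(X,\varphi)$ with $\varphi(\sigma_X)=\alpha$ after rescaling $\sigma_X$. By the component $\widetilde{\mathcal{D}}_i$ the class $\varphi^{-1}(x)$ lies in $\mathcal{C}_X$, and by the wall condition it pairs non-trivially with every wall divisor; crucially one \emph{cannot} conclude directly that $\varphi^{-1}(x)$ is Kähler, but Corollary \ref{cor:alphamovedtoK} guarantees it lies in a Kähler-type chamber $g^{-1}f^*(\mathcal{K}_{X'})$ with $g\in\MonHdg(X)$ and $f\colon X\dashrightarrow X'$ bimeromorphic. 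Replacing $(X,\varphi)$ by $(X',\varphi')$ with $\varphi':=\varphi\circ g^{-1}\circ f^*$ gives a marking that still lies in $\mathcal{M}_\Lambda^\circ$ by Proposition \ref{mainprop2}, whose period is still $\alpha$ up to rescaling $\sigma_{X'}$ (since $g$ and $f^*$ act by scalars on $H^{2,0}$), and for which $x\in\varphi'(\mathcal{K}_{X'})$; together with the $B$-field $\varphi'^{-1}(\beta)$ this produces a preimage. The delicate bookkeeping throughout is the positive-cone half, which must be tracked to ensure the realized Kähler class matches the prescribed component $\widetilde{\mathcal{D}}_i$. With $\widetilde{\mathscr{P}}$ shown bijective and a local diffeomorphism onto $\widetilde{\mathcal{D}}_i$, it is an isomorphism, completing the proof.
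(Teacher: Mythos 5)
Your proposal is correct, and its mathematical core coincides with the paper's proof: injectivity comes from Theorem \ref{main}(ii) applied to $\varphi'^{-1}\circ\varphi$ (using that periods are recorded as vectors in $\Lambda\otimes\C$, not projectively), and surjectivity combines surjectivity of the usual period map (Theorem \ref{mainGTTO}) with Corollary \ref{cor:alphamovedtoK} and a change of marking by $g$ and $f^*$ (legitimized by Proposition \ref{mainprop2}) --- this is exactly Steps 3 and 4 of the paper. Where you genuinely diverge is the treatment of components and connectedness. The paper's Step 1 only splits $\widetilde{\dD_\Lambda}$ into two disjoint nonempty open sets, by choosing via surjectivity of $\cP$ a marked orbifold $(X_\alpha,\phi_\alpha)$ over each period $\alpha$ and asking whether $\phi_\alpha^{-1}(x)$ lies in $\dC_{X_\alpha}$ or in $-\dC_{X_\alpha}$; that these two sets are actually connected (so that there are \emph{exactly} two components) is deduced a posteriori from Step 2 --- connectedness of $\widetilde{\dM_\Lambda}$, proved by chaining twistor spaces via Lemma \ref{connected} --- together with surjectivity and the $-\id$ symmetry of Remark \ref{rmk:conncompsM}. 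You instead compute $\pi_0(\widetilde{\dD_\Lambda})=2$ intrinsically: $(\alpha,x)$ amounts to an oriented positive three-space with connected frame fibres, the Grassmannian $O(3,\rk\Lambda-3)/(O(3)\times O(\rk\Lambda-3))$ of positive three-spaces is contractible so its oriented double cover has exactly two sheets, and deleting the wall locus (a countable union of closed subsets of real codimension at least two, one for each $D\in\cW_\Lambda$) does not change $\pi_0$. This promotes the orientation picture of Remark \ref{rem:conncompviadet} from a remark to the actual component count. What your route buys: Lemma \ref{connected} and the paper's Step 2 become unnecessary, connectedness of $\widetilde{\dM_\Lambda}$ falls out as a corollary of the isomorphism rather than serving as an input, and you make explicit the well-definedness of $\widetilde{\cP}$ (wall divisors orthogonal to $\alpha$ pair nontrivially with $\omega_X$ because $\dK_X\subseteq\BK_X$ and $D^\perp\cap\BK_X=\emptyset$), which the paper leaves implicit. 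What the paper's route buys: no Grassmannian topology and no codimension estimate, only machinery already developed. Both arguments leave the same fine point at the same level of detail --- that every $(X,\varphi)\in\dM_\Lambda^\circ$ with period $\alpha$ singles out the same half of the positive cone in $\alpha^\perp\cap\Lambda_\R$ --- which you flag as ``bookkeeping'' and the paper hides in ``by continuity''; in both proofs it is closed by the locally constant orientation argument over the connected component $\dM_\Lambda^\circ$.
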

\begin{proof}
The end of this section is devoted to the proof of this theorem.
\subsubsection*{Step 1: The set $\widetilde{\mathcal{D}_\Lambda}$ has at least two connected components}
Consider $(\alpha, \beta + ix) \in \widetilde{\dD_\Lambda}$. By the surjectivity of the usual period map $\cP$
(see  \cite[Proposition 5.8]{Menet-2020}), there exists a marked irreducible symplectic orbifold $(X_\alpha,\phi_\alpha) \in \dM^\circ_\Lambda$ such that
$\alpha \in \phi_\alpha(H^{2,0}(X_\alpha))$. Since by assumption $x^2>0$, this implies that either $\phi_\alpha^{-1}(x)\in \dC_{X_\alpha}$
(i.e. $x\cdot \phi_\alpha(K_{X_\alpha}) >0$), or $-\phi_\alpha^{-1}(x) \in \dC_{X_\alpha}$.
By continuity, this splits $\widetilde{\dD_\Lambda}$ into two disjoint open sets:
$$\widetilde{\dD_1}\coloneqq \{(\alpha, \beta+ix) \,|\, \phi_\alpha^{-1}(x)\in \dC_{X_\alpha}\}, \quad \text{and} \quad
\widetilde{\dD_2}\coloneqq \{(\alpha, \beta+ix) \,|\, -\phi_\alpha^{-1}(x)\in  \dC_{X_\alpha}\}.$$
Note that, by exchanging $x$ and $-x$, one verifies that both sets are non-empty.

\subsubsection*{Step 2: The moduli space $\widetilde{\mathcal{M}_\Lambda}$ is connected}
Let $(X,\varphi,\sigma_X,\omega_X,\beta)$ and $(Y,\psi,\sigma_{Y},\omega_{Y},\gamma)$ be two elements in $\widetilde{\mathcal{M}_\Lambda}$.
By Lemma \ref{connected}, we can connect $(X,\phi)$ and $(Y,\psi)$ by twistor spaces. The first twistor
line is given by the positive 3-space $W\coloneqq \Vect(\phi(\Rea\sigma_X), \phi(\Ima\sigma_X), \omega_X')$ for some
$\omega_X'\in \mathcal{K}_X$. Since any fiber $X_1$ of the twistor family is endowed with an induced marking $\phi_1$
and a canonical Kähler class
$\omega_1$, one can connect $(X, \phi, \sigma_X, \omega_X', \beta)$ to $(X_1, \phi_1, \sigma_{1}, \omega_1,
\phi^{-1}\circ \phi (\beta))$ for some $\sigma_1 \in H^{2,0}(X_1).$ Use that $\mathcal{K}_X$ is connected to observe
that this is actually connected to the original $(X,\phi, \sigma_X, \omega_X, \beta)$. 
Repeating this process for the other twistor spaces, we can find $\sigma_{Y}'$ and $\omega_{Y}'$ such that 
$(X,\varphi,\sigma_X,\omega_X,\beta)$ and $(Y,\psi,\sigma_{Y}',\omega_{Y}',\psi^{-1}\circ\varphi(\beta))$ are connected.
Using again that the Kähler cone, the space of non-zero holomorphic 2-forms, and the second cohomology group with real coefficient are connected, it follows that
$(X,\varphi,\sigma_X,\omega_X,\beta)$ and $(Y,\psi,\sigma_{Y},\omega_{Y},\gamma)$ can be connected in $\widetilde{\dM_\Lambda}$.



We prove now that $\widetilde{\mathscr{P}}:\widetilde{\mathcal{M}_{\Lambda}}\rightarrow\widetilde{\mathcal{D}}_i$ is an isomorphism.
\subsubsection*{Step 3: The map $\widetilde{\mathscr{P}}$ is injective}
Indeed, choose $(X,\varphi,\sigma_X,\omega_X,\beta)$ and $(X',\varphi',\sigma_{X'},\omega_{X'},\beta')$ in
$\widetilde{\mathcal{M}_{\Lambda}}$. Since $(X,\phi)$ and $(X',\phi')\in \dM_\Lambda^\circ$ are deformation equivalent,  $$\varphi'^{-1}\circ\varphi:H^2(X,\Z)\rightarrow H^2(X',\Z)$$
defines a parallel transport operator.
Assume that: 
$$\widetilde{\mathscr{P}}(X,\varphi,\sigma_X,\omega_X,\beta)=\widetilde{\mathscr{P}}(X',\varphi',\sigma_{X'},\omega_{X'},\beta').$$
Then $\phi'^{-1}\circ \phi$ is a Hodge isometry which sends a Kähler class to a Kähler class. Hence, by
Theorem \ref{main} (ii), there exists an isomorphism $f:X'\rightarrow X$ such that
$f^*=\varphi'^{-1}\circ\varphi$, which means that
$(X,\varphi,\sigma_X,\omega_X,\beta)\simeq(X',\varphi',\sigma_{X'},\omega_{X'},\beta')$ are isomorphic.
\subsubsection*{Step 4: The map $\widetilde{\mathscr{P}}$ is surjective}
By the convention from Step 1, $\widetilde{\cP}(\widetilde{\dM_\Lambda})\subseteq \widetilde{\dD_1}$.
Let $(\alpha,\beta+ix)\in \widetilde{\mathcal{D}_1}$.  Theorem \ref{mainGTTO} implies that there exists $(X,\varphi)\in
\mathcal{M}_\Lambda^{\circ}$ such that $\varphi^{-1}(\alpha)\in H^{2,0}(X)$. Then $\varphi^{-1}(\beta)\in
H^2(X,\R)$. It remains to study $\varphi^{-1}(x)$, which is an element of $\dC_X$ by definition of
$\widetilde{\dD_1}$.
By Corollary \ref{wall}, the wall divisors on $X$ are given by
$\phi^{-1}(\cW_\Lambda)\cap \Pic(X)$, and therefore $\phi^{-1}(x)\cdot D\neq 0$, for all wall divisors $D\in
\cW_X$, by the definition of $\widetilde{\mathcal{D}_\Lambda}$.

Hence, by Corollary \ref{cor:alphamovedtoK} there exists $g\in \Mon^2_{Hdg}(X)$ and a bimeromorphic map $f:X\dashrightarrow Y$such that $ \varphi^{-1}(x)\in g(f^*(\mathcal{K}_Y))$. 
We consider $\psi:=\varphi\circ g\circ f^*$ (note that by Proposition \ref{mainprop2} $f^*$ is a parallel transport operator). It follows that $\psi^{-1}(x)\in \mathcal{K}_Y$ and $\psi^{-1}(\alpha)\in H^{2,0}(Y)$.

Then $(Y,\psi,\psi^{-1}(\alpha),\psi^{-1}(x),\psi^{-1}(\beta))\in\widetilde{\mathcal{M}_{\Lambda}}$ and $\mathcal{P}(Y,\psi,\psi^{-1}(\alpha),\psi^{-1}(x),\psi^{-1}(\beta))=(\alpha,\beta+ix)$.
\end{proof}
\begin{rmk}
Of course, the previous theorem remains true if we remove the data of the B-field $\beta$ in $\widetilde{\mathcal{M}_{\Lambda}}$ and in $\widetilde{\mathcal{D}}_i$. This data will be useful only in the framework of mirror symmetry. 
\end{rmk}
\begin{rmk}
Theorem \ref{TorelliMirror} also shows that $\widetilde{\mathcal{M}_\Lambda}$ is separated.
\end{rmk}
\begin{rmk} \label{rmk:conncompsM}
There is a natural isomorphism between $\widetilde{\mathcal{D}}_1$ and $\widetilde{\mathcal{D}}_2$, given by $-\id$:
$$\xymatrix@R0cm@C0.5cm{\ \ \ \ \ \ \ \ \widetilde{\mathcal{D}}_1\ar[r]& \widetilde{\mathcal{D}}_2\\
\left(\alpha,\beta+ix\right)\ar[r]&\left(-\alpha,-\beta-ix\right).}$$
Moreover, if $\widetilde{\mathscr{P}}:\widetilde{\mathcal{M}_\Lambda}\rightarrow \widetilde{\mathcal{D}}_1$ is an isomorphism, we can consider
$$\widetilde{\mathcal{M}_\Lambda^-}:=\left\{\left.(X,\varphi,\sigma_X,\omega_X,\beta)\ \right|\ (X,-\varphi,\sigma_X,\omega_X,\beta)\in \widetilde{\mathcal{M}_\Lambda}\right\}$$
and the induced map ${\widetilde{\mathscr{P}}}: {\widetilde{\mathcal{M}_\Lambda^-}}\rightarrow \widetilde{\mathcal{D}}_2$ is an isomorphism.
\end{rmk}
\begin{rmk}\label{rem:conncompviadet}
  An alternative way to determine the connected component $\widetilde{\dD}_i$ of an element
  $(\alpha, \beta + ix)\in \widetilde{\dD_\Lambda}$ is the following. Fix a positive definite 3-space $W\subseteq
  \Lambda_\bR$ with a determinant form $\det_W$. Let $\pi_W\colon \Lambda_\bR \to W$ be the orthogonal projection to $W$.
  Note that $\Vect_\bR(\Rea \alpha, \Ima \alpha, x)\subseteq \Lambda_\bR$ is also a positive definite
  3-space.

  Since the signature of $\Lambda$ is $(3,\rk\Lambda-3)$, this implies that $\pi_W\colon
  \Vect_\bR(\Rea \alpha, \Ima \alpha, x) \to W$ is an isomorphism, and therefore
  $\det_W (\pi_W(\Rea \alpha), \pi_W(\Ima \alpha),\pi_W (x))\neq 0 $.
  Therefore, by continuity, the sign of $\det_W (\pi_W(\Rea \alpha), \pi_W(\Ima \alpha),\pi_W (x))$
  determines the connected component of $(\alpha, \beta + ix)$, since both signs are achieved by elements of
  $\widetilde{\dD_\Lambda}$ (consider
  e.g. $(\alpha, -\beta- ix)$).
\end{rmk}
\subsection{Definition of the mirror symmetry}\label{dms}
Using Theorem \ref{TorelliMirror}, we can define algebraically a mirror symmetry on a subset of
the period domain $\widetilde{\mathcal{D}_\Lambda}$, which will induce a mirror symmetry on a corresponding subset of the moduli
space $\widetilde{\mathcal{M}_{\Lambda}}$.


Let $\Lambda$ be a lattice of signature $(3,\rk\Lambda-3)$. For  any field $K$, we
denote: $$\Lambda_{K}:=\Lambda\otimes K.$$
For $n\in \mathbb{N}^*$, let $U(n)$ be the lattice of rank 2 with basis $(v,v^*)$ such that $v^2=v^{*2}=0$ and
$v\cdot v^*=n$.
Assume that there exists a primitive embedding $j:U(n)\hookrightarrow \Lambda$, such that the sublattice $j(U(n))$ is a
direct summand, i.e.~$\Lambda= \Lambda'\oplus^{\bot} j(U(n))$. For simplicity of the notation, we also
denote by $(v,v^*)$ the basis of $j(U(n))$ and when there is no ambiguity, we simply write $U(n)$ for
$j(U(n))$.

Let $U'(n)$ be another hyperbolic lattice isometric to $U(n)$. We denote by $\xi_j$ the isometry of $O(\Lambda\oplus U'(n))$ which fixes $\Lambda'$ and exchanges $U(n)$ and $U'(n)$. As explained in \cite[Section 6]{Huybrechts2}, the mirror map is well defined on the following period domain:
\begin{align*}
  \Gr_{2,2}^{po}&(\Lambda_{\R}\oplus U'(n)_{\R})\\&:=
  \left\{\left.(\alpha_1,\alpha_2)\in \big(\bP(\Lambda_\bC\oplus
U'(n)_\bC)\big)^2\ \right|\ \alpha_1^2=\alpha_2^2=\alpha_1\cdot\alpha_2=0,\ \alpha_1\cdot\overline{\alpha_1}>0,\ \alpha_2\cdot\overline{\alpha_2}>0\right\}.
\end{align*}
The notation $\Gr_{2,2}^{po}(\Lambda_{\R}\oplus U'(n)_{\R})$ was chosen, since this space can be identified
with the Grassmannian which parametrizes pairs of orthogonal positive 2-planes in $\Lambda_{\R}\oplus U'(n)_{\R}$.
On $\Gr_{2,2}^{po}(\Lambda_{\R}\oplus U'(n)_{\R})$ the mirror map is given by: $$\overline{m_j}:=\iota\circ \xi_j,$$ where 
 $\iota(\alpha_1,\alpha_2)=(\alpha_2,\alpha_1)$.
 \begin{rmk}
 The general concept of mirror symmetry is to exchange the holomorphic 2-form and the metric. This operation will
 roughly be given by $\iota$. We refer to \cite{Aspinwall} for the physical meaning of the composition by $\xi_j$.
 \end{rmk}
\begin{rmk}
In \cite[Section 6]{Huybrechts2} it is considered the case $n=1$. However, this condition is too restrictive in
the framework of orbifolds (see Example \ref{exempleBB}). This is why we allow any $n\in\mathbb{N}^*$. This does not affect the definition, since $U(n)_{\R}=U_{\R}$.
\end{rmk}
In the following, we consider a lift of $\overline{m_j}$ to a subset of $\widetilde{\mathcal{D}_\Lambda}$. We recall that $\dG_\Lambda$ is defined in (\ref{last}).
It is shown in \cite[Section 4]{Huybrechts2} that $\dG_\Lambda$ (which contains
$\widetilde{\mathcal{D}_\Lambda}$) admits a natural map to $\Gr_{2,2}^{po}(\Lambda_{\R}\oplus U'(n)_{\R})$, obtained as the composition of the quotient map $Q:\dG_\Lambda\rightarrow \Gr_{2,1}^{po}(\Lambda_{\R})\times \Lambda_{\R}$ and the following natural embedding:

$$\xymatrix@R0cm@C0.5cm{\overline{h}:&\Gr_{2,1}^{po}(\Lambda_{\R})\times \Lambda_{\R}\ar@{^{(}->}[r]& \Gr_{2,2}^{po}(\Lambda_{\R}\oplus U'(n)_{\R})\\
&(\alpha,\beta+ix)\ar[r]&\left(\sqrt{n}\alpha-(\alpha\cdot\beta)w,\sqrt{n}\beta+\frac{1}{2}(x^2-\beta^2)w+w^*+i\left(\sqrt{n}x-(x\cdot\beta)w\right)\right),}$$
where $(w,w^*)$ is a basis of $U'(n)$; we set $h:=\overline{h}\circ Q$.
Unfortunately, $h(\dG_\Lambda)$ is not fixed by $\overline{m_j}$. We will therefore consider a subspace of $\dG_\Lambda$
such that its image under $h$ is fixed by $\overline{m_j}$: 
\small
$$\Dom(m_j)^\dG:=\left\{\left.(\alpha,\beta+ix)\in \dG_\Lambda\ \right|\ \Ima(\alpha)\cdot v=0,\ \Ree(\alpha)\cdot v\neq0,\ x\cdot v=\ \beta\cdot v=0\right\}.$$
\normalsize
Let $\pr: \Lambda\rightarrow j(U(n))^\bot$ be the projection. Then, similar to \cite[Proposition
  6.8]{Huybrechts2}, we can define the following action  $m_j$ on $\Dom(m_j)^\dG$:
\begin{equation}
\xymatrix@R0cm@C0.5cm{\ \ \ \ \ \ \ \ m_{j}:& \Dom(m_j)^\dG\ar[r]&  \Dom(m_j)^\dG\ \ \ \ \ \\
&(\alpha,\beta+ix)\ar@{|->}[r]&\left(\frac{\sqrt{n}\pr(\beta+ix)-\frac{1}{2}(\beta+ix)^2v+v^*}{\Ree(\alpha)\cdot v}
  ,\frac{\sqrt{n}\pr(\alpha)-(\alpha\cdot\beta)v}{\Ree(\alpha)\cdot v}\right).}
\label{equationmiror}
\end{equation}
\begin{prop}
  The map $m_j$ is an involution, which satisfies 
  $$h\circ m_j = \overline{m_j} \circ h.$$
\end{prop}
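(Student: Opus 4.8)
The plan is to prove both assertions by direct computation inside the lattice $\Lambda\oplus U'(n)$, following the template of \cite[Proposition 6.8]{Huybrechts2}; the only genuinely new feature is the bookkeeping of the factors $\sqrt{n}$ that arise from working with $U(n)$ rather than $U$. Throughout I would abbreviate $\omega:=\beta+ix$ and $r:=\Ree(\alpha)\cdot v$. The domain conditions give $\alpha\cdot v=\Ree(\alpha)\cdot v+i\,\Ima(\alpha)\cdot v=r\in\bR^\times$ and $\omega\cdot v=0$, so that $\pr(\omega)=\omega-\tfrac{\omega\cdot v^*}{n}v$ and $\pr(\alpha)=\alpha-\tfrac{\alpha\cdot v^*}{n}v-\tfrac{r}{n}v^*$. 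I would record once and for all the elementary identities $v^2=(v^*)^2=0$, $v\cdot v^*=n$, $\pr(\mu)\cdot v=\pr(\mu)\cdot v^*=0$, $\pr\circ\pr=\pr$, $\pr(v)=\pr(v^*)=0$, together with the two isotropy relations that will do all of the work: from $\alpha^2=0$ one extracts $\pr(\alpha)^2=-\tfrac{2r(\alpha\cdot v^*)}{n}$, and from $\omega\cdot v=0$ one gets $\pr(\omega)^2=\omega^2$.

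First I would check that $m_j$ maps $\Dom(m_j)^\dG$ into itself. Writing $m_j(\alpha,\omega)=(\alpha',\omega')$ with $\alpha'=\tfrac1r\big(\sqrt{n}\,\pr(\omega)-\tfrac12\omega^2 v+v^*\big)$ and $\omega'=\tfrac1r\big(\sqrt{n}\,\pr(\alpha)-(\alpha\cdot\beta)v\big)$ (here $\alpha\cdot\beta=\alpha\cdot\omega$, since $\alpha\cdot x=0$ on $\dG_\Lambda$), a short computation yields $\alpha'\cdot v=\tfrac{n}{r}$ and $\omega'\cdot v=0$; hence $\Ima(\alpha')\cdot v=0$, $\Ree(\alpha')\cdot v=\tfrac{n}{r}\neq 0$, and $\beta'\cdot v=x'\cdot v=0$, which are exactly the extra conditions cutting out the domain. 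The isotropy $(\alpha')^2=0$ follows from $\pr(\omega)^2=\omega^2$. For the remaining $\dG_\Lambda$-conditions $\alpha'\cdot x'=0$, $\alpha'\cdot\overline{\alpha'}>0$ and $(x')^2>0$ I would prefer to argue conceptually rather than frontally: since $\overline{m_j}$ is built from the isometry $\xi_j$ and the swap $\iota$ it preserves $\Gr^{po}_{2,2}(\Lambda_\bR\oplus U'(n)_\bR)$, so once the intertwining of the next step identifies $m_j(\alpha,\omega)$ as the $\overline{h}$-preimage of $\overline{m_j}(h(\alpha,\omega))\in\Gr^{po}_{2,2}$, the embedding property of $\overline{h}$ (\cite[Section 4]{Huybrechts2}) returns these positivity and orthogonality conditions at once. (Alternatively each of them can be checked directly, in the same style as $(\alpha')^2=0$.)

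Next I would prove the intertwining $h\circ m_j=\overline{m_j}\circ h$. The computation is streamlined by first rewriting $\overline{h}([\alpha],\omega)=\big(\sqrt{n}\,\alpha-(\alpha\cdot\omega)w,\ \sqrt{n}\,\omega-\tfrac12\omega^2 w+w^*\big)$, which holds on all of $\dG_\Lambda$ because $\alpha\cdot x=0$ forces $\alpha\cdot\beta=\alpha\cdot\omega$ and because $\tfrac12(x^2-\beta^2)-i(x\cdot\beta)=-\tfrac12\omega^2$. Applying $\xi_j$ (which sends $v\mapsto w$, $v^*\mapsto w^*$, $w\mapsto v$, $w^*\mapsto v^*$ and fixes $\Lambda'$) to each component and then swapping the two components via $\iota$, and inserting the decompositions of $\alpha$ and $\omega$, produces an explicit formula for $\overline{m_j}(h(\alpha,\omega))$. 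On the other side, feeding $m_j(\alpha,\omega)$ into the clean form of $\overline{h}$ and using the two auxiliary evaluations $\alpha'\cdot\omega'=-\tfrac{\omega\cdot v^*}{r}$ and $(\omega')^2=-\tfrac{2(\alpha\cdot v^*)}{r}$ produces an explicit formula for $h(m_j(\alpha,\omega))$. The two formulas coincide after multiplication by the single nonzero scalar $\tfrac{\sqrt{n}}{r}$ in each of the two projective factors of $\Gr^{po}_{2,2}$, which is precisely equality in $\Gr^{po}_{2,2}$.

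Finally I would settle the involution by the (surprisingly clean) direct computation of $m_j^2$. Using $\pr(\alpha')=\tfrac{\sqrt n}{r}\pr(\omega)$, $\pr(\omega')=\tfrac{\sqrt n}{r}\pr(\alpha)$ together with $\pr(\alpha)^2=-\tfrac{2r(\alpha\cdot v^*)}{n}$ and $\alpha'\cdot\beta'=\alpha'\cdot\omega'=-\tfrac{\omega\cdot v^*}{r}$, the normalization $r'=\Ree(\alpha')\cdot v=\tfrac{n}{r}$ lets the decomposition $\alpha=\pr(\alpha)+\tfrac{\alpha\cdot v^*}{n}v+\tfrac{r}{n}v^*$ reassemble exactly, giving $m_j(\alpha',\omega')=(\alpha,\omega)$ on the nose; observe that the division by $\Ree(\alpha)\cdot v$ in the definition is exactly what pins the final scalar to $1$ rather than to an arbitrary real multiple. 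I expect the main obstacle to be purely organizational, namely keeping the $\sqrt{n}$-normalizations and the projective rescalings consistent across the two factors, and disposing of the positivity part of well-definedness economically (which is why I would rather deduce it from the embedding property of $\overline{h}$ than compute it head-on).
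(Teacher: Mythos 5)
Your proof is correct, and it takes a genuinely more self-contained route than the paper's. For the intertwining $h\circ m_j=\overline{m_j}\circ h$ the paper gives no computation at all: it cites \cite[Proposition 6.8]{Huybrechts2} and only remarks that the present $m_j$ differs from Huybrechts' map in two ways (the domain here is $\dG_\Lambda$ rather than its quotient, and the variables are rescaled, replacing $v$ by $\frac{1}{\sqrt{n}}v$ and similarly for $v^*$, $w$, $w^*$). The entire computational content of the paper's proof is the involution property, verified by expanding the second component of $m_j\circ m_j$ (the key step being the evaluation of $\pr(\beta+ix)\cdot\pr(\Ree \alpha)$, of the same nature as your evaluation of $\alpha'\cdot\omega'$), the first component being ``left to the reader''. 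Your route buys three things the paper leaves implicit: the $\sqrt{n}$-bookkeeping hidden in the citation is made explicit; the well-definedness of $m_j$ as a self-map of $\Dom(m_j)^\dG$ is actually verified (the paper silently asserts it in display (\ref{equationmiror})); and the involution computation, organized through $\pr(\alpha')=\frac{\sqrt{n}}{r}\pr(\omega)$, $\pr(\omega')=\frac{\sqrt{n}}{r}\pr(\alpha)$, $r'=\frac{n}{r}$, treats both components symmetrically instead of one. I checked your auxiliary identities ($\alpha'\cdot v=\frac{n}{r}$, $\omega'\cdot v=0$, $\alpha'\cdot\omega'=-\frac{\omega\cdot v^*}{r}$, $(\omega')^2=-\frac{2(\alpha\cdot v^*)}{r}$, and the common discrepancy scalar $\frac{\sqrt{n}}{r}$ in both projective factors); they are all correct.

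One point to make airtight: your plan to deduce the remaining $\dG_\Lambda$-conditions for $(\alpha',\omega')$ from membership of $\overline{h}(\alpha',\omega')$ in $\Gr^{po}_{2,2}(\Lambda_\R\oplus U'(n)_\R)$. Writing $(A,B)\coloneqq\overline{h}(\alpha',\omega')$, the relations $B^2=0$ and $A\cdot B=0$ hold identically for \emph{any} formal pair $(\alpha',\omega')$, so with the literal defining equations of $\Gr^{po}_{2,2}$ in the paper, membership recovers $(\alpha')^2=0$, $\alpha'\cdot\overline{\alpha'}>0$ and $(x')^2>0$, but \emph{not} $\alpha'\cdot x'=0$: that condition is equivalent to $A\cdot\overline{B}=0$ (one computes $A\cdot\overline{B}=-2in\,(\alpha'\cdot x')$), i.e.\ to genuine orthogonality of the two real $2$-planes, which is part of the Grassmannian interpretation but not of the displayed equations. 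This is not a gap in substance --- $\xi_j$ is a real isometry, so $\overline{m_j}$ preserves the plane-orthogonality condition, and $h(\dG_\Lambda)$ satisfies it since $\alpha\cdot x=0$ there --- but you should either say this explicitly or fall back on the direct check of $\alpha'\cdot x'=0$ that you mention as an alternative (it reduces in two lines to $\beta\cdot\Ima\alpha-\Ima\alpha\cdot\beta=0$).
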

\begin{proof}
  The claimed compatibility with $\overline{m_j}$ is an immediate consequence of  \cite[Proposition
    6.8]{Huybrechts2}. Note that there are
  only two differences between the map $m_j$ and the map which is considered in \cite{Huybrechts2}. The first
  difference is that in {\it loc.~cit.}
  the domain of $m_j$ is a quotient of $\dG_\Lambda$. The second is that we performed a change of
  variables, by replacing $v$ by $\frac{1}{\sqrt{n}}v$ (and similarly for $v^*$, $w$, and $w^*$),
  in order to obtain the same intersection values. We can verify that $m_j$ is an involution by a direct computation.
	We denote $(\alpha^{\vee\vee},\beta^{\vee\vee}+ix^{\vee\vee}):=m_j\circ m_j(\alpha,\beta+ix)$. We are going to check that 
	$\beta^{\vee\vee}+ix^{\vee\vee}=\beta+ix$; the verification that $\alpha^{\vee\vee}=\alpha$ is very similar and is left to the reader.
  By definition of $m_j$, we have:
	\begin{align*}
	\beta^{\vee\vee}+ix^{\vee\vee}&=\frac{\frac{n\pr(\beta+ix)}{\Ree(\alpha)\cdot
            v}-\frac{\left[\left(\sqrt{n}\pr(\beta+ix)-\frac{1}{2}(\beta+ix)^2v+v^*\right)\cdot\left(\sqrt{n}\pr(\Ree(\alpha))-(\Ree(\alpha)\cdot\beta)v\right)\right]}{(\Ree(\alpha)\cdot
            v)^2}v}
             {n(\Ree (\alpha)\cdot v)^{-1}}\\
	&=\pr(\beta+ix)-\frac{\pr(\beta+ix)\cdot\pr(\Ree \alpha)-\Ree(\alpha)\cdot\beta}{\Ree(\alpha)\cdot v}v.
	\end{align*}
	It remains to compute $\pr(\beta+ix)\cdot\pr(\Ree \alpha)$:
	\begin{align*}
	\pr(\beta+ix)\cdot\pr(\Ree \alpha)&=\left(\beta+ix-\frac{(\beta+ix)\cdot v^*}{n}v\right)\cdot \left(\Ree(\alpha)-\frac{\Ree(\alpha)\cdot v^*}{n}v-\frac{\Ree(\alpha)\cdot v}{n}v^*\right)\\
	&=\beta\cdot \Ree(\alpha)-\frac{(\beta+ix)\cdot v^*}{n}\left(v\cdot \Ree(\alpha)\right).
	\end{align*}
	Combined with the previous equation, we obtain:
	$$\beta^{\vee\vee}+ix^{\vee\vee}=\pr(\beta+ix)+\frac{(\beta+ix)\cdot v^*}{n}v=\beta+ix.$$
\end{proof}
Finally, $m_j$ will be well defined on the following period subdomain of $\widetilde{\mathcal{D}_\Lambda}$:
For $(\alpha,\beta+ ix)\in \Dom(m_j)^\dG$, choose the notation $(\alpha^\vee, \beta^\vee, + i x
^\vee)\coloneqq m_j(\alpha, \beta+ ix)$. Then set
$$\Dom(m_j):=\left\{\left.(\alpha,\beta+ix)\in\widetilde{\mathcal{D}_\Lambda}\  \right|\ \Ima(\alpha)\cdot v=0,\ \Ree(\alpha)\cdot v\neq0,\ x\cdot v=\ \beta\cdot v=0,\ \alpha^{\vee\bot}\cap x^{\vee\bot}\cap \cW_\Lambda=\emptyset\right\}.$$

Moreover, we denote:
$$\Dom(m_j)_1:=\Dom(m_j)\cap\widetilde{\mathcal{D}}_1\quad \text{and}\quad \Dom(m_j)_2:=\Dom(m_j)\cap\widetilde{\mathcal{D}}_2.$$
We obtain the following proposition.
\begin{prop}\label{proporientation}
The mirror involution $m_j$ exchanges $\Dom(m_j)_1$ and $\Dom(m_j)_2$.
\end{prop}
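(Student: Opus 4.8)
The plan is to detect the two components $\widetilde{\mathcal{D}}_1,\widetilde{\mathcal{D}}_2$ by the orientation invariant of Remark~\ref{rem:conncompviadet} and to show that $m_j$ reverses it. Fix once and for all a positive definite $3$-space $W\subseteq\Lambda_\bR$ with determinant form $\det_W$ and orthogonal projection $\pi_W$, and for a point $p=(\alpha,\beta+ix)$ set
$$\epsilon(p):=\sign\det{}_W\!\bigl(\pi_W\Ree\alpha,\ \pi_W\Ima\alpha,\ \pi_W x\bigr),$$
so that $\widetilde{\mathcal{D}}_1$ and $\widetilde{\mathcal{D}}_2$ are the loci $\epsilon=+1$ and $\epsilon=-1$. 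Writing $(\alpha^\vee,\beta^\vee+ix^\vee):=m_j(p)$ and $\kappa:=\Ree(\alpha)\cdot v\neq0$, the formula~(\ref{equationmiror}) yields the explicit real and imaginary parts
\begin{equation*}
\Ree\alpha^\vee=\tfrac1\kappa\bigl(\sqrt n\,\pr\beta-\tfrac12(\beta^2-x^2)v+v^*\bigr),\quad
\Ima\alpha^\vee=\tfrac1\kappa\bigl(\sqrt n\,\pr x-(\beta\cdot x)v\bigr),\quad
x^\vee=\tfrac1\kappa\bigl(\sqrt n\,\pr(\Ima\alpha)-(\Ima(\alpha)\cdot\beta)v\bigr).
\end{equation*}
The whole statement then amounts to comparing the signs of the two $3\times 3$ determinants $\epsilon(p)$ and $\epsilon(m_j(p))$.

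To make these determinants explicit I would choose $W=\Vect_\bR\bigl(p_1,p_2,\tfrac1{\sqrt{2n}}(v+v^*)\bigr)$, where $p_1,p_2$ is an orthonormal positive pair in $j(U(n))^\perp$; since $(v+v^*)^2=2n$ this is an orthonormal oriented frame $g_1,g_2,g_3$ of $W$, and for any triple $f_1,f_2,f_3$ one has $\det_W(\pi_W f_1,\pi_W f_2,\pi_W f_3)=\det\bigl((g_i\cdot f_j)\bigr)$. Both $\epsilon(p)$ and $\epsilon(m_j(p))$ thereby become signs of explicit determinants in the intersection numbers of $\alpha,\beta,x$ with $p_1,p_2,v,v^*$. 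Because $m_j$ is an involution of $\dG_\Lambda$ and the defining conditions of $\Dom(m_j)$ are symmetric under it (the condition $\alpha^{\vee\bot}\cap x^{\vee\bot}\cap\cW_\Lambda=\emptyset$ and the $\widetilde{\mathcal{D}_\Lambda}$-condition swap under $m_j$), the image point again lies in $\widetilde{\mathcal{D}_\Lambda}$; hence its positive $3$-frame is nondegenerate, $\epsilon(m_j(p))\neq0$, and the product $\Phi(p):=\epsilon(p)\cdot\epsilon(m_j(p))$ is a locally constant $\{\pm1\}$-valued function on $\Dom(m_j)$. It suffices to prove $\Phi\equiv-1$.

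For this I would exploit two symmetries preserving $\Dom(m_j)$, namely $x\mapsto-x$ and $\alpha\mapsto-\alpha$, both of which respect every defining relation. From the formulas above one checks that under $x\mapsto-x$ one has $\Ima\alpha^\vee\mapsto-\Ima\alpha^\vee$ while $\Ree\alpha^\vee$ and $x^\vee$ are unchanged, so both $\epsilon$ and $\epsilon(m_j(\,\cdot\,))$ change sign and $\Phi$ is preserved; under $\alpha\mapsto-\alpha$ one has $\alpha^\vee\mapsto-\alpha^\vee$ with $x^\vee$ unchanged, so both orientations are preserved and again $\Phi$ is preserved. Since $x\mapsto-x$ flips $\epsilon$ (the component of $\widetilde{\mathcal{D}_\Lambda}$) and $\alpha\mapsto-\alpha$ flips $\sign\kappa$, these moves connect the four sign-sectors of $\Dom(m_j)$, so $\Phi$ takes a single value on all of them. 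Evaluating at the concrete frame $\beta=0$, with $\Ima\alpha=p_2$ and $x=p_1$ an orthonormal positive pair and $\Ree\alpha\in j(U(n))_\bR$ chosen so that all orthogonality relations hold, a direct computation gives $\epsilon=-\sign\kappa$ and $\epsilon(m_j(\,\cdot\,))=\sign\kappa$, whence $\Phi=-1$. This proves $\Phi\equiv-1$, i.e.\ $m_j$ reverses $\epsilon$ and therefore exchanges $\Dom(m_j)_1$ and $\Dom(m_j)_2$.

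The main obstacle is that a naive global sign identity is unpleasant: the quantity $\Ree(\alpha)\cdot v^*$ is unconstrained by the domain conditions and the $B$-field $\beta$ varies freely, so the two determinants are not proportional as polynomials. The device that circumvents this is precisely the local constancy of $\Phi$ combined with the symmetry reduction, which replaces the general identity by one explicit sample computation. The only remaining technical point is to confirm that each of the four sign-sectors of $\Dom(m_j)$ is connected, so that local constancy of $\Phi$ upgrades to constancy on it; this follows from the connectedness of the parameter space of positive $2$-frames $(\Ree\alpha,\Ima\alpha)$ and positive vectors $x$ subject to the linear conditions $\Ima\alpha\cdot v=x\cdot v=\beta\cdot v=0$ and a fixed sign of $\kappa$.
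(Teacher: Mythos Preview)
Your approach is correct in outline but takes a genuinely different route from the paper, and leaves one step only sketched.

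The paper does not fix a global $W$ at all. Instead, for each point $p=(\alpha,\beta+ix)$ it picks $W=\Vect_\bR\bigl(\tfrac12(v+v^*),\pr(\Ima\alpha),\pr(x)\bigr)$, which is positive definite precisely because $\Ima\alpha\cdot v=x\cdot v=0$ on $\Dom(m_j)^{\dG}$. With this adapted $W$ the projection matrix for $(\Rea\alpha,\Ima\alpha,x)$ is upper triangular, so $\epsilon(p)$ comes out immediately as the sign of $\Rea\alpha\cdot(v+v^*)$, hence of $\kappa$. For $m_j(p)$ they first move $\beta$ to $0$ inside the same connected component (which you also do implicitly), then compute the analogous $3\times3$ determinant and obtain the sign $-\kappa$. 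Thus the opposite-sign conclusion is reached pointwise, with no connectedness argument needed. Since for any fixed $(W,\det_W)$ the sign is a locally constant function on $\dG_\Lambda$, comparing the two signs with the \emph{same} $W$ suffices to decide the components, even though $W$ varies with $p$.

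Your argument instead fixes $W$ globally, forms $\Phi=\epsilon\cdot(\epsilon\circ m_j)$, and uses local constancy together with the symmetries $x\mapsto-x$, $\alpha\mapsto-\alpha$ to reduce to one sample. This works, and your sample computation is correct. The weak link is the connectedness of the four sign-sectors, which you only sketch. Two remarks that would close it cleanly: first, extend $\Phi$ to $\Dom(m_j)^{\dG}$ (where $m_j$ is already defined and $\epsilon$ makes sense on all of $\dG_\Lambda$); this removes the countably many $\cW_\Lambda$-hyperplane conditions, which a priori could disconnect a real manifold. Second, on $\Dom(m_j)^{\dG}$ one can show directly that the sign-sectors are connected: every positive $3$-plane $P$ satisfies $v\notin P^\perp$ (since $v^\perp/\bR v$ has signature $(2,b-4)$), so $K:=P\cap v^\perp$ is a well-defined positive $2$-plane; then $\Rea\alpha$ is forced into the line $K^{\perp}\cap P$ (two choices of sign, giving $\sign\kappa$) and $(\Ima\alpha,x)$ runs over an $S^1\times(\bR\setminus\{0\})$-bundle over the connected Grassmannian, contributing exactly the orientation sign $\sign\epsilon$. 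This yields precisely four components, matching the four $(\sign\kappa,\sign\epsilon)$-values, and your symmetry reduction then applies.

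In short: your proof is valid once the connectedness is pinned down as above; the paper's adapted-$W$ computation is slicker because it bypasses that issue entirely.
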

\begin{proof}
  Using Remark \ref{rem:conncompviadet}, we can fix a convenient positive definite three-space $W
  \subseteq \Lambda_\bR$ with a volume form $\det_W$, and we only need to compare the signs of
  $\det_W (\pi_W(\Rea \alpha), \pi_W(\Ima \alpha),\pi_W (x))$
  and $\det_W (\pi_W(\Rea \alpha^\vee), \pi_W(\Ima \alpha^\vee),\pi_W (x^\vee))$.

  Let us choose  $W=\Vect_{\R}(\frac{1}{2}(v+v^*),\pr(\Ima \alpha), \pr(x))$ with
  $\det_W(\frac{1}{2}(v+v^*),\pr(\Ima \alpha), \pr(x))=1$.
  
  We start by determining $\det_W(\pi_W(\Rea \alpha),\pi_W(\Ima \alpha), \pi_W(x))$.
  Notice that
  $$\pi_W(\Ima \alpha)=
  \pi_W(\pr \Ima \alpha)
  + \frac{1}{n}(\Ima \alpha\cdot v^*)\pi_W(v)
  =\pr \Ima \alpha + \frac{1}{n}(\Ima\alpha \cdot v^*)\frac {1}{2}(v+v^*)$$
   and similarly
   $\pi_W(\Rea \alpha)= 0  + \frac{1}{n}[\Rea \alpha \cdot (v  + v^*)] \frac{1}{2}(v+v^*),$ and
   $\pi_W(x)=\pr x + \frac{1}{n}(x\cdot v^*)\frac{1}{2}(v+v^*)$.
   Therefore,
   \begin{align*}
     \det\hspace{-2pt}\hspace{1pt}_W(\pi_W(\Rea \alpha),\pi_W(\Ima \alpha), \pi_W(x))&=
     \det
     \begin{pmatrix}\frac{1}{n}(\Rea \alpha \cdot (v+v^*)) & \frac{1}{n}(\Ima \alpha \cdot v^*) &
       \frac{1}{n}(x \cdot v^*) \\
       0 & 1 & 0\\
       0 & 0 & 1
     \end{pmatrix}
     \\
     &= \frac{1}{n}\Rea \alpha \cdot (v+v^*).
   \end{align*}
Since $\Rea \alpha^2>0$, $(\Rea \alpha)\cdot v$ and $(\Rea \alpha)\cdot v^*$ have the same sign.
In particular the sign of $\frac{1}{n}\Rea \alpha\cdot (v+ v^*)$ is the same as the sign of $\Rea \alpha \cdot v$.

As a second step, we need to determine the sign of
$\det_W(\pi_W(\Rea \alpha^\vee),\pi_W(\Ima \alpha^\vee), \pi_W(x^\vee))$.
Note, that by replacing $\beta$ with $0$, one obtains an element in the same connected component of
$\Dom(m_j)$, so it is sufficient to treat the special case $\beta=0$.
Using \eqref{equationmiror} and the same reasoning as above, we need to determine the determinant of the
following matrix:
\begin{equation*}
  M\coloneqq 
  \frac{1}{\Rea \alpha \cdot v}
  \begin{pmatrix}
    \frac{1}{2}x^2 + 1 &0 &0 \\
    0 & 0 & \sqrt{n}\\
    0 & \sqrt{n} & 0 
  \end{pmatrix}.
\end{equation*}
Observe that $\det (M) = -\frac{n}{(\Rea \alpha \cdot v)^3}(\frac{1}{2} x^2 +1)$. Since $x^2>0$, this has the
same sign as $-\Rea \alpha \cdot v$.
Therefore $(\alpha, \beta + ix)$ and $m_j(\alpha,\beta + ix)=(\alpha^\vee,\beta^\vee + ix^\vee)$ lie in
different connected components of $\widetilde{\dD_\Lambda}$, which proves that $m_j$ interchanges
$\Dom(m_j)_1$ and $\Dom(m_j)_2$.
\end{proof}
\begin{remark}
  Proposition \ref{proporientation} can colloquially be rephrased by saying that the mirror map changes the orientation of positive
  three-spaces, which happens to match our experience with real life mirrors.
\end{remark}

To be able to have a better idea in practice of the elements contained in $\Dom(m_j)$, we provide the following lemma, which is a straightforward consequence of \cite[Proposition 4.6]{Franco-Jardim-Menet}.
\begin{lemme}\label{domain}
Let $(\alpha,\beta+ix)\in\widetilde{\mathcal{D}_\Lambda}$ such that:
\begin{itemize}
\item 
$(\alpha,\beta+ix)\in \Dom(m_j)^{\mathcal{G}}$,
\item
$x^2\in \R\smallsetminus \Q$,
\item
  For each $D\in \cW_\Lambda$ one of the following two implications is satisfied:\newline
  Either
$\pr(\Ree \alpha)\cdot D\neq 0\ \Rightarrow\ \pr(\beta)\cdot D\neq 0$ or $\alpha \cdot D\neq 0\ \Rightarrow \Ima\alpha \cdot D\neq 0$.
\end{itemize}
Then there exists a dense uncountable subset $\Pi\in \R^*$, such that for all $\lambda\in \Pi$, we have $(\alpha,\lambda\beta+ix)\in \Dom(m_j)$.
\end{lemme}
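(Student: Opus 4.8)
The plan is to observe that, among the defining conditions of $\Dom(m_j)$, scaling $\beta$ to $\lambda\beta$ only affects the wall-avoidance condition on the mirror image, and then to show that this single condition fails for at most countably many $\lambda$. Indeed, the conditions $\alpha^2=0$, $\alpha\cdot\overline{\alpha}>0$, $x^2>0$, $\alpha\cdot x=0$ and $\alpha^{\bot}\cap x^{\bot}\cap\cW_\Lambda=\emptyset$ cutting out $\widetilde{\mathcal{D}_\Lambda}$, together with $\Ima(\alpha)\cdot v=0$, $\Ree(\alpha)\cdot v\neq0$ and $x\cdot v=0$, do not involve $\beta$ at all, while $\lambda\beta\cdot v=0$ is equivalent to $\beta\cdot v=0$ for $\lambda\neq0$. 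Hence for every $\lambda\in\R^*$ the element $(\alpha,\lambda\beta+ix)$ automatically satisfies all conditions for membership in $\Dom(m_j)$ except possibly $\alpha^{\vee\bot}\cap x^{\vee\bot}\cap\cW_\Lambda=\emptyset$, where $(\alpha^\vee,\beta^\vee+ix^\vee)=m_j(\alpha,\lambda\beta+ix)$.

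Next I would expand $\Ree(\alpha^\vee)$, $\Ima(\alpha^\vee)$ and $x^\vee$ using the explicit formula \eqref{equationmiror}. Writing $c:=\Ree(\alpha)\cdot v\neq0$, one gets $\Ree(\alpha^\vee)=\tfrac1c\big[\sqrt{n}\lambda\,\pr(\beta)-\tfrac12(\lambda^2\beta^2-x^2)v+v^*\big]$, $\Ima(\alpha^\vee)=\tfrac1c\big[\sqrt{n}\,\pr(x)-\lambda(\beta\cdot x)v\big]$ and $x^\vee=\tfrac1c\big[\sqrt{n}\,\pr(\Ima\alpha)-\lambda(\Ima\alpha\cdot\beta)v\big]$. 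For a fixed $D\in\cW_\Lambda$ the three scalars $D\cdot\Ree(\alpha^\vee)$, $D\cdot\Ima(\alpha^\vee)$, $D\cdot x^\vee$ are polynomials in $\lambda$ of degree at most $2$, $1$, $1$, and the condition $D\in\alpha^{\vee\bot}\cap x^{\vee\bot}$ is exactly their simultaneous vanishing. The bad set $B_D\subseteq\R^*$ on which all three vanish is finite as soon as at least one of them is not identically zero, since a nonzero one-variable polynomial has finitely many roots. The heart of the matter is therefore to verify this nonvanishing for every $D$.

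I would split into two cases. If $D\cdot v\neq0$, the constant term of the degree-$2$ polynomial $D\cdot\Ree(\alpha^\vee)$ is $\tfrac1c\big[\tfrac12 x^2(D\cdot v)+D\cdot v^*\big]$; as $D\cdot v,\,D\cdot v^*\in\Z$ and $x^2\notin\Q$, the bracket is irrational, hence nonzero, so this polynomial is not identically zero. If $D\cdot v=0$, the three polynomials collapse to $\tfrac1c\big[\sqrt{n}(D\cdot\beta)\lambda+D\cdot v^*\big]$ and the constants $\tfrac1c\sqrt{n}(D\cdot x)$, $\tfrac1c\sqrt{n}(D\cdot\Ima\alpha)$ (using $D\cdot\pr(y)=D\cdot y$ for $y\in\{\beta,x,\Ima\alpha\}$ when $D\cdot v=0$). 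Were all three identically zero, then $D\cdot x=D\cdot\Ima\alpha=D\cdot\beta=D\cdot v^*=0$; combined with $D\cdot v=0$ and $(\alpha,\beta+ix)\in\widetilde{\mathcal{D}_\Lambda}$, which forbids $D\cdot\Ree\alpha=D\cdot\Ima\alpha=D\cdot x=0$ simultaneously, this forces $D\cdot\Ree(\alpha)\neq0$. But then both implications of the third hypothesis fail for this $D$: from $D\cdot v=D\cdot v^*=0$ one computes $\pr(\Ree\alpha)\cdot D=\Ree(\alpha)\cdot D\neq0$ while $\pr(\beta)\cdot D=\beta\cdot D=0$ (killing the first), and $\alpha\cdot D=\Ree(\alpha)\cdot D\neq0$ while $\Ima\alpha\cdot D=0$ (killing the second). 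This contradiction shows some polynomial is nonzero, so $B_D$ is finite here too.

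Finally, since $\cW_\Lambda\subseteq\Lambda$ is countable, $\bigcup_{D\in\cW_\Lambda}B_D$ is a countable union of finite sets, hence countable, and I would take $\Pi:=\R^*\smallsetminus\bigcup_{D}B_D$, which is co-countable in $\R^*$ and therefore dense and uncountable; by construction $(\alpha,\lambda\beta+ix)\in\Dom(m_j)$ for every $\lambda\in\Pi$. The main obstacle is the case $D\cdot v=0$, where one must simultaneously exploit membership in $\widetilde{\mathcal{D}_\Lambda}$ and the disjunctive third hypothesis to exclude the degenerate simultaneous vanishing; this is precisely the content packaged in \cite[Proposition 4.6]{Franco-Jardim-Menet}, and the argument above is the concrete bookkeeping behind that reduction.
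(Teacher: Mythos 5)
Your proof is correct. Note, however, that the paper does not actually prove this lemma in-text: it simply declares it ``a straight forward consequence of [Franco--Jardim--Menet, Proposition 4.6]'' and moves on. What you have written is the self-contained computation that such a citation hides, and it checks out at every step: (a) scaling $\beta\mapsto\lambda\beta$ indeed leaves all defining conditions of $\Dom(m_j)$ untouched except the wall-avoidance condition $\alpha^{\vee\bot}\cap x^{\vee\bot}\cap\cW_\Lambda=\emptyset$ for the mirror; (b) your expansions of $\Ree(\alpha^\vee)$, $\Ima(\alpha^\vee)$, $x^\vee$ from (\ref{equationmiror}) are exact, and the three pairings with a fixed $D$ are polynomials in $\lambda$ of degrees $\le 2,1,1$; (c) in the case $D\cdot v\neq 0$ the constant term $\frac{1}{c}\bigl[\frac{1}{2}x^2(D\cdot v)+D\cdot v^*\bigr]$ is irrational (here the hypothesis $x^2\notin\Q$ and integrality of $D\cdot v$, $D\cdot v^*$ enter), hence nonzero; (d) in the case $D\cdot v=0$, simultaneous identical vanishing forces $D\cdot v^*=D\cdot\beta=D\cdot x=D\cdot\Ima\alpha=0$, whence $D\cdot\Ree\alpha\neq0$ by membership in $\widetilde{\mathcal{D}_\Lambda}$, and then both disjuncts of the third hypothesis fail (this is exactly where that oddly-phrased hypothesis is needed, and you use $D\cdot\pr(y)=D\cdot y$ correctly, which requires both $D\cdot v=D\cdot v^*=0$ and the conditions $y\cdot v=0$ from $\Dom(m_j)^\dG$); (e) countability of $\cW_\Lambda\subseteq\Lambda$ then gives a co-countable, hence dense and uncountable, set $\Pi$. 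Compared to the paper, your route buys transparency: it verifies that the three hypotheses of the lemma are precisely what the two case analyses consume, whereas the paper's one-line reference leaves the reader to reconstruct (or trust) that the cited proposition, stated in the smooth setting of Franco--Jardim--Menet, transfers verbatim once $\cW_\Lambda$ replaces the wall divisors used there.
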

We are now ready to state the definition of the mirror symmetry of a marked irreducible symplectic orbifold
endowed with a Kähler class and a B-field.
\begin{defi}
Let $\widetilde{\mathscr{P}}:\widetilde{\mathcal{M}_{\Lambda}}\rightarrow\widetilde{\mathcal{D}}_1$ be the
isomorphism provided by Theorem \ref{TorelliMirror}, and consider the associated natural map 
${\widetilde{\mathscr{P}}}: {\widetilde{\mathcal{M}_\Lambda^-}}\rightarrow \widetilde{\mathcal{D}}_2$ 
from Remark \ref{rmk:conncompsM}.
Consider the moduli subspaces
$$\widetilde{\mathcal{M}_{\Lambda}}^j:=\widetilde{\mathscr{P}}^{-1}(\Dom(m_j)_1) \text{\ and \ }
\widetilde{\mathcal{M}_{\Lambda}^-}^j:=\widetilde{\mathscr{P}}^{-1}(\Dom(m_j)_2).$$
The mirror symmetry $\widetilde{m}_j$ is an involution defined on the moduli space
$\widetilde{\mathcal{M}_{\Lambda}}^j \cup \widetilde{\mathcal{M}_{\Lambda}^-}^j$
by:
$$\widetilde{m}_j:=\widetilde{\mathscr{P}}^{-1}\circ m_j\circ\widetilde{\mathscr{P}}.$$
Since $m_j$ interchanges $\widetilde{\dD}_1$ and  $\widetilde{\dD}_2$, the map $\widetilde{m}_j$ exchanges $\widetilde{\mathcal{M}_{\Lambda}}^j$ and $\widetilde{\mathcal{M}_{\Lambda}^-}^j$.
\end{defi}

Actually, a mirror can be defined for all the elements of $\left\{\left.(X,\varphi)\in \mathcal{M}_\Lambda^{\circ}\right|\ \dK_X\cap
v^\perp \neq \emptyset\right\}$ which is a dense subset of
$\mathcal{M}_\Lambda^{\circ}$. (Note that the constructed mirror depends on the choice of $\sigma_X$ and $\beta$ for the element $(X,\phi,
\sigma_X, \omega_X, \beta)\in \widetilde{\dM_\Lambda}^j$) 
\begin{prop}\label{existance}
Let $(X,\varphi)\in \mathcal{M}_\Lambda^{\circ}$. Assume that the Kähler cone $\dK_X$ satisfies $\dK_X\cap
v^\perp \neq \emptyset$ 
then
there exists $\sigma_X\in H^{2,0}(X)$, $\omega_X\in \mathcal{K}_X$ and $\beta\in H^2(X,\R)$ such that: 
$$(X,\varphi,\sigma_X,\omega_X,\beta)\in \widetilde{\mathcal{M}_{\Lambda}}^j.$$
\end{prop}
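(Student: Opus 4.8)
The plan is to build the datum $(\sigma_X,\omega_X,\beta)$ so that its period lands in $\Dom(m_j)_1$, arranging all the defining conditions of $\Dom(m_j)^\dG$ by direct choices and reserving the single nonlinear condition $\alpha^{\vee\perp}\cap x^{\vee\perp}\cap\cW_\Lambda=\emptyset$ for Lemma \ref{domain}. First I would fix $\omega_X$ and the scaling of $\sigma_X$. Since $v^2=0$ and the Beauville--Bogomolov form on $H^{1,1}(X,\R)$ has signature $(1,b_2(X)-3)$, the orthogonal complement of any nonzero isotropic class lying in $H^{1,1}(X,\R)$ is disjoint from the positive cone $\dC_X$ (if $w\in\dC_X$ then $w^\perp$ is negative definite, so a nonzero $u\in w^\perp$ has $q(u)<0$, contradicting $q(u)=0$). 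The hypothesis $\dK_X\cap v^\perp\neq\emptyset$ therefore forces $\varphi^{-1}(v)\notin H^{1,1}(X,\R)$, equivalently $\varphi(\sigma_X)\cdot v\neq0$ for the holomorphic $2$-form. Rescaling $\sigma_X$ by a suitable complex number, I may assume $\alpha:=\varphi(\sigma_X)$ satisfies $\alpha\cdot v\in\R^*$, i.e.\ $\Ima(\alpha)\cdot v=0$ and $\Ree(\alpha)\cdot v\neq0$. The same hypothesis provides a Kähler class in $v^\perp$; since $q$ is nonconstant on the nonempty open set $\dK_X\cap v^\perp$, I may furthermore pick $\omega_X\in\dK_X\cap v^\perp$ with $q(\omega_X)\notin\Q$. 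Setting $x:=\varphi(\omega_X)$ gives $x\cdot v=0$, $x^2=q(\omega_X)>0$ irrational, and $\alpha\cdot x=0$ for type reasons.

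For any $B$-field $\beta_0$ with $\varphi(\beta_0)\cdot v=0$, the element $(X,\varphi,\sigma_X,\omega_X,\beta_0)$ lies in $\widetilde{\mathcal{M}_\Lambda}$, so by Theorem \ref{TorelliMirror} its period $(\alpha,\varphi(\beta_0)+ix)$ lies in $\widetilde{\dD_\Lambda}$; because $\varphi^{-1}(x)=\omega_X\in\dK_X\subseteq\dC_X$ it lies in the component $\widetilde{\dD}_1$, and by the choices above it also lies in $\Dom(m_j)^\dG$. Thus the only condition left to secure for membership in $\Dom(m_j)_1$ is the mirror condition $\alpha^{\vee\perp}\cap x^{\vee\perp}\cap\cW_\Lambda=\emptyset$, and this is precisely what Lemma \ref{domain} delivers after a suitable rescaling of $\beta_0$.

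To invoke Lemma \ref{domain}, I would choose $b:=\varphi(\beta_0)\in v^\perp\subseteq\Lambda_\R$ generically. Its first two hypotheses ($(\alpha,b+ix)\in\Dom(m_j)^\dG$ and $x^2\notin\Q$) are already arranged, so only the third needs care: for every $D\in\cW_\Lambda$ one of the two implications must hold. The implication ``$\alpha\cdot D\neq0\Rightarrow\Ima(\alpha)\cdot D\neq0$'' holds automatically unless $\Ima(\alpha)\cdot D=0$ and $\Ree(\alpha)\cdot D\neq0$; so only the countably many $D\in\cW_\Lambda$ with, in addition, $\pr(\Ree\alpha)\cdot D\neq0$ remain, and for these I must secure $\pr(b)\cdot D\neq0$. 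Using self-adjointness of $\pr$, $\pr(b)\cdot D=b\cdot\pr(D)$ and $\pr(\Ree\alpha)\cdot D=\Ree\alpha\cdot\pr(D)\neq0$, so $\pr(D)\neq0$; moreover $\pr(D)\notin\R v$, because $\R v\cap j(U(n))^\perp=0$ (as $v\cdot v^*=n\neq0$). Hence each such $D$ excludes only the proper hyperplane $\{b\in v^\perp\mid b\cdot\pr(D)=0\}$ of $v^\perp$, and since a real vector space of positive dimension (here $\dim v^\perp=\rk\Lambda-1\geq4$) is not a countable union of proper subspaces, a generic $b\in v^\perp$ avoids all of them. Lemma \ref{domain} then yields $\lambda\in\R^*$ with $(\alpha,\lambda b+ix)\in\Dom(m_j)$; since $\lambda b\cdot v=0$ and $\varphi^{-1}(x)$ is Kähler, this point lies in $\Dom(m_j)_1$. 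Setting $\beta:=\varphi^{-1}(\lambda b)$, the element $(X,\varphi,\sigma_X,\omega_X,\beta)=\widetilde{\mathscr{P}}^{-1}(\alpha,\lambda b+ix)$ lies in $\widetilde{\mathscr{P}}^{-1}(\Dom(m_j)_1)=\widetilde{\mathcal{M}_\Lambda}^j$, as required.

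The main obstacle is the mirror condition $\alpha^{\vee\perp}\cap x^{\vee\perp}\cap\cW_\Lambda=\emptyset$: in contrast to the others it constrains the image of the period point under the nonlinear involution $m_j$ and cannot be met by a single linear choice of $\beta$. Lemma \ref{domain} is exactly the device that reduces it to a genericity statement, and the only genuine work is the verification of that lemma's third hypothesis, i.e.\ the countable-hyperplane-avoidance argument carried out above.
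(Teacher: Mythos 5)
Your proposal is correct and follows essentially the same route as the paper: rescale $\sigma_X'$ by a complex number so that $\Ima(\varphi(\sigma_X))\cdot v=0$, pick $\omega_X\in\dK_X\cap\varphi^{-1}(v^\perp)$ and rescale it to make $q(\omega_X)$ irrational, then conclude via Lemma \ref{domain}. In fact you supply two verifications that the paper's own proof leaves implicit --- that the hypothesis $\dK_X\cap v^\perp\neq\emptyset$ forces $\Ree(\varphi(\sigma_X))\cdot v\neq 0$ (via the signature argument on isotropic $(1,1)$-classes), and that a generic $B$-field $b\in v^\perp$ satisfies the third hypothesis of Lemma \ref{domain} (via the countable-hyperplane-avoidance argument using $\pr(D)\notin\R v$) --- so your write-up is, if anything, more complete than the paper's.
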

\begin{proof}
Let $(X,\varphi)$ be as in the statement of the proposition. Let $\sigma_X'\in H^{2,0}(X)$. For any $(a+ib)\in
\C^*$, we have: $$\Ima\left[(a+ib)\sigma_X'\right]=a\Ima \sigma'_X+b\Ree \sigma'_X.$$
Choose $(a+ib)\in \C^*$ such that:
$$a(\Ima \sigma'_X\cdot v)+b(\Ree \sigma'_X\cdot v)=0.$$
Then we set $\sigma_X:=(a+ib)\sigma_X'$.
Since by assumption $\dK_X\cap v^\perp \neq \emptyset$,  we can choose $\omega_X\in \mathcal{K}_X$ such that $\omega_X\cdot v=0$.
By rescaling $\omega_X$, we can furthermore assume that $\omega_X^2\in\R\smallsetminus \Q$. Finally, by Lemma \ref{domain}, we can choose $\beta\in H^2(X,\R)$ such that 
$(X,\varphi,\sigma_X,\omega_X,\beta)\in \widetilde{\mathcal{M}_{\Lambda}}^j.$
\end{proof}
Now, we show that our definition of mirror symmetry coincides with the one of Dolgachev \cite{Dolgachev} and
Camere \cite{Camere-2016} for lattice-polarized irreducible symplectic manifolds.
\begin{defi}\label{polarized}
Let $\Lambda$ be a lattice of signature $(3,\rk \Lambda-3)$.
Let $\nu:N\hookrightarrow \Lambda$ be a primitive embedding of a sublattice $N$ of signature $(1,\rk N-1)$.
An \emph{$(N,\nu)$-polarized irreducible symplectic orbifold} is a couple $(X,\varphi)\in\mathcal{M}_{\Lambda}$ such that:
 $$\nu(N)\subset \varphi(\Pic X).$$
 We say that $(X,\varphi)$ is \emph{strictly $(N,\nu)$-polarized} if:
 $$\nu(N)= \varphi(\Pic X).$$
\end{defi}
\begin{prop}\label{polarized}
Let $\nu: N\hookrightarrow \Lambda$ be a primitive embedding, with $N$ of signature $(1,\rk N-1)$, such that $j:U(n)\hookrightarrow \nu(N)^{\bot}$. 
We set $N^{\vee}\coloneqq U(n)^\perp \cap \nu(N)^{\bot}$ and $\nu^{\vee}:N^{\vee}\hookrightarrow \Lambda$ the natural embedding.
Let $(X,\varphi)\in \mathcal{M}_\Lambda^{\circ}$ be a strictly $(N,\nu)$-polarized irreducible symplectic orbifold.  
Then there exists $\sigma_X\in H^{2,0}(X)$, $\omega_X\in \mathcal{K}_X$ and $\beta\in H^2(X,\R)$ such that: 
$$(X,\varphi,\sigma_X,\omega_X,\beta)\in \widetilde{\mathcal{M}_{\Lambda}}^j,$$ 
and $(X^{\vee},\varphi^{\vee})$ is $(N^{\vee},\nu^{\vee})$-polarized, where
$(X^{\vee},\varphi^{\vee},\sigma_{X^{\vee}},\omega_{X^{\vee}},\beta^{\vee}):=m_j(X,\varphi,\sigma_X,\omega_X,\beta)$.
\end{prop}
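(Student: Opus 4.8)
The plan is to combine the existence argument of Proposition~\ref{existance} with an explicit computation of the image of $\Pic X$ under $m_j$. First I would record two structural facts. Since $(X,\varphi)$ is strictly $(N,\nu)$-polarized, $\varphi(\Pic X)=\nu(N)$, and because $H^{2,0}(X)$ is orthogonal to $H^{1,1}(X)$ for the Beauville--Bogomolov form, the period $\alpha:=\varphi(\sigma_X)$ satisfies $\alpha\in\nu(N)^\perp_\C$. By hypothesis $j(U(n))\subseteq\nu(N)^\perp$ and, $U(n)$ being an orthogonal direct summand of $\Lambda$, one has $\nu(N)^\perp=N^\vee\oplus^\perp U(n)$; in particular $\nu(N)\perp U(n)$, so $v\notin\nu(N)=\alpha^\perp\cap\Lambda$ and hence $\alpha\cdot v\neq0$. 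Moreover $\nu(N)$ has signature $(1,\rk N-1)$, so it contains a class of positive square, whence by the projectivity criterion $X$ is projective and its ample cone is a nonempty open subcone of $\nu(N)_\R$.

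Next I would choose the data. Rescaling $\sigma_X$ by a suitable complex scalar (exactly as in Proposition~\ref{existance}, using $\alpha\cdot v\neq0$) I arrange $\Ima\alpha\cdot v=0$ and $\Ree\alpha\cdot v\neq0$. Using projectivity I pick an ample, hence Kähler, class $\omega_X\in\nu(N)_\R$; then $x:=\varphi(\omega_X)$ lies in $(N^\vee)^\perp$ and satisfies $x\cdot v=0$ (because $\nu(N)\perp N^\vee$ and $\nu(N)\perp U(n)$), and after rescaling $x^2\in\R\smallsetminus\Q$. Since $\omega_X$ is Kähler, no wall divisor is orthogonal to it, so $\alpha^\perp\cap x^\perp\cap\cW_\Lambda\subseteq\nu(N)\cap x^\perp\cap\cW_\Lambda=\emptyset$; together with $x^2>0$ and $\alpha\cdot x=0$ this gives $(\alpha,\beta+ix)\in\widetilde{\mathcal{D}_\Lambda}$, and $\omega_X\in\mathcal{C}_X$ places it in the component $\widetilde{\mathcal{D}}_1$. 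Finally I choose a generic $\beta\in\nu(N)_\R$ (so $\beta\cdot v=0$ and $\beta\in(N^\vee)^\perp$) and invoke Lemma~\ref{domain} to replace $\beta$ by $\lambda\beta$, landing in $\Dom(m_j)_1$, i.e.\ $(X,\varphi,\sigma_X,\omega_X,\beta)\in\widetilde{\mathcal{M}_\Lambda}^j$.

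It then remains to identify the Picard lattice of the mirror. Writing $\alpha^\vee$ for the first component of $m_j(\alpha,\beta+ix)$, i.e.\ the period of $X^\vee$, I would pair it against an arbitrary $w\in N^\vee$ using \eqref{equationmiror}:
$$(\Ree\alpha\cdot v)\,(\alpha^\vee\cdot w)=\sqrt{n}\,\pr(\beta+ix)\cdot w-\frac{1}{2}(\beta+ix)^2\,(v\cdot w)+(v^*\cdot w).$$
Since $N^\vee\perp U(n)$ we have $v\cdot w=v^*\cdot w=0$, and since $w\in U(n)^\perp$ the orthogonal projection is self-adjoint, so $\pr(\beta+ix)\cdot w=(\beta+ix)\cdot w$, which vanishes because $\beta,x\in(N^\vee)^\perp$. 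Hence $\alpha^\vee\cdot w=0$ for every $w\in N^\vee$, i.e.\ $\nu^\vee(N^\vee)\subseteq(\alpha^\vee)^\perp\cap\Lambda=\varphi^\vee(\Pic X^\vee)$, which is exactly the assertion that $(X^\vee,\varphi^\vee)$ is $(N^\vee,\nu^\vee)$-polarized.

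The step I expect to be the main obstacle is the wall-avoidance on the mirror side, that is, passing from $\Dom(m_j)^\dG$ to $\Dom(m_j)$ (the condition $\alpha^{\vee\perp}\cap x^{\vee\perp}\cap\cW_\Lambda=\emptyset$). This is precisely what Lemma~\ref{domain} supplies, but only after verifying its genericity hypothesis for each $D\in\cW_\Lambda$; this is where the freedom to perturb $\beta$ within $\nu(N)_\R$ is essential, and it is the one point genuinely requiring care. Everything else reduces to the bookkeeping above, together with the observation that the polarization conclusion is insensitive to the sign ambiguity in $\varphi^\vee$ coming from Remark~\ref{rmk:conncompsM}, since only the orthogonality $N^\vee\perp\alpha^\vee$ is used.
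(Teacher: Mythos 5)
Your overall route coincides with the paper's: normalize $\sigma_X$ so that $\Ima \varphi(\sigma_X)\cdot v=0$, take $\omega_X$ ample in $\varphi^{-1}(\nu(N)_{\R})$ with $\omega_X^2\in\R\smallsetminus\Q$, choose $\beta\in\varphi^{-1}(\nu(N)_{\R})$, feed everything into Lemma \ref{domain}, and read off the mirror polarization from (\ref{equationmiror}); your closing computation that $\alpha^{\vee}\perp\nu^{\vee}(N^{\vee})$ is a correct (and usefully explicit) version of the paper's final sentence. The genuine gap sits exactly at the step you flag and then defer, and the mechanism you propose for it cannot work. Split $\cW_\Lambda$ according to whether $D\perp\nu(N)$ or not. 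For $D\notin\nu(N)^{\bot}$, genericity of $\beta$ in $\nu(N)_{\R}$ does give $\pr(\beta)\cdot D=\beta\cdot D\neq0$ (only countably many hyperplanes must be avoided), which settles the first alternative of the hypothesis of Lemma \ref{domain}. But for $D\in\cW_\Lambda\cap\nu(N)^{\bot}$ one has $\pr(\beta)\cdot D=\beta\cdot D=0$ for \emph{every} $\beta\in\nu(N)_{\R}$, so no perturbation of $\beta$ inside $\nu(N)_{\R}$ helps: the first alternative then holds only in the accidental case $\pr(\Ree\alpha)\cdot D=0$. Moreover such a $D$ satisfies $\alpha\cdot D\neq0$ (otherwise $D\in\alpha^{\bot}\cap\Lambda=\nu(N)$ by strictness, whence $D\in\nu(N)\cap\nu(N)^{\bot}=0$), so the second alternative demands $\Ima\alpha\cdot D\neq0$ --- a condition on the phase of $\sigma_X$, not on $\beta$. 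Since there is no reason for $\cW_\Lambda\cap\nu(N)^{\bot}$ to be empty (these are precisely the classes that threaten the wall-avoidance condition on the mirror side), your proof is incomplete at its crucial point, and your stated completion strategy (perturbing $\beta$) would fail on exactly these classes.

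What the paper does to close this case --- and what is absent from your proposal --- is an argument that the normalization $\Ima\varphi(\sigma_X)\cdot v=0$ \emph{automatically} forces $\Ima\varphi(\sigma_X)\cdot D\neq0$ for all $D\in\cW_\Lambda\cap\nu(N)^{\bot}$ (this is (\ref{ab2}) in the paper). Writing $\alpha=\varphi(\sigma_X)$, suppose $\Ima\alpha\cdot D=0$. Since $\Ree\alpha\cdot v=\alpha\cdot v\neq0$ (using $v\notin\varphi(\Pic X)$, which again needs strictness), one may set $c:=\frac{\Ree\alpha\cdot D}{\Ree\alpha\cdot v}$ and obtain a real class $D-cv$ orthogonal to $\alpha$; the paper then invokes the strict polarization to identify this $\sigma_X$-orthogonal class with an element of $\varphi(\Pic X)_{\R}=\nu(N)_{\R}$, while on the other hand $D-cv\in\nu(N)^{\bot}_{\R}$, so non-degeneracy of $\nu(N)$ forces $D-cv=0$, which is impossible because $D^2<0$ and $v^2=0$. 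Note that strictness is thus used a second, essential time here (your proposal uses it only to get $\alpha\cdot v\neq0$), and this lattice-theoretic contradiction is the mathematical core of the paper's proof; nothing in your proposal substitutes for it.
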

\begin{proof} 
For simplicity, we denote $\nu(N)$ by $N$. The projectivity criterion (\cite[Theorem 1.2]{Menet-2020}) implies that $X$ is projective, since $\Pic(X)=\phi^{-1}(N)$, has signature $(1,\rk
N -1)$, and therefore the ample cone $\dK_X\cap \phi^{-1}(N_\bR)$
is non-empty.
Choose $\omega_X\in \mathcal{K}_X\cap \varphi^{-1}(N_{\R}
)$ with $\omega_X^2\in\R\smallsetminus \Q$.
Furthermore, fix $\beta\in \varphi^{-1}(N_{\R})$ such that for all $D \in \cW_\Lambda\setminus \varphi^{-1}(N^{\bot}\cap \cW_\Lambda))$:
$$ \pr(\beta)\cdot D\neq 0.$$
In particular, the third condition of Lemma \ref{domain} is satisfied for all $D \in \cW_\Lambda\setminus
\varphi^{-1}(N^{\bot}\cap \cW_\Lambda))$. 

Let $\sigma'_X\in H^{2,0}(X)$ and $(a+ib)\in \C^*$.
Again,
we have: $$\Ima\left[(a+ib)\sigma'_X\right]=a\Ima \sigma'_X+b\Ree \sigma'_X,$$
and we can choose $(a+ib)\in \C^*$ such that:
\begin{equation}
\Ima((a+ib)\sigma'_X)\cdot v=a(\Ima \sigma'_X\cdot v)+b(\Ree \sigma'_X\cdot v)=0.
\label{ab}
\end{equation}
Set $\sigma_X\coloneqq (a+ib)\sigma_X'$.
To use Lemma \ref{domain}, we also need to check the third condition for
 $D\in\varphi^{-1}(N^{\bot}\cap \cW_\Lambda)$. To do this, we will show that 
\begin{equation}
\Ima(\sigma_X)\cdot D=a(\Ima \sigma'_X\cdot D)+b(\Ree \sigma'_X\cdot D)\neq 0,
\label{ab2}
\end{equation}
for all $D\in \varphi^{-1}(\cW_\Lambda\cap N^{\bot})$. Assume, for the sake of contradiction, that \eqref{ab2} does not
hold, i.e. $\Ima(\sigma)\cdot D=0$.
Observe that in this case we have $\sigma_X\cdot (D-cv)=\Rea(\sigma_X)\cdot(D-cv)$,
for any real number $c\in \R$.
Note that $\Rea(\sigma_X) \cdot v=\sigma_X\cdot v \neq 0$, since $v\notin \Pic(X)$.
Hence, by setting $c\coloneqq \frac{\Rea(\sigma_X)\cdot D}{\Rea(\sigma_X)\cdot v}$, one obtains
$\sigma_X\cdot (D-cv)=0$.
This implies that $D-cv\in \Pic X$.
However, we have chosen $D\in \Pic X^{\bot}$ and $v\in \Pic X^{\bot}$. Since the Beauville-Bogomolov form is non-degenerate, we have $D-cv=0$.
This is impossible because $q(D)<0$ and $v^2=0$. 

Therefore, we can apply Lemma \ref{domain} to see that there exists $\lambda\in \R$ such that 
$(X,\varphi,\sigma_X,\omega_X,\lambda\beta)\in \widetilde{\mathcal{M}_{\Lambda}}^j.$
Finally, since by hypothesis $\nu^{\vee}(N^{\vee})^{\bot}=U(n)\oplus \nu(N)$, we obtain from (\ref{equationmiror}) that $\sigma_{X^{\vee}}\in \nu^{\vee}(N_{\C}^{\vee})^{\bot}$,
whence $(X^{\vee},\varphi^{\vee})$ is $(N^{\vee},\nu^{\vee})$-polarized. 
\end{proof}
~\\


\bibliographystyle{alpha}
\bibliography{Literatur}

\noindent
Gr\'egoire \textsc{Menet}

\noindent
Académie militaire de Saint-Cyr Coëtquidan

\noindent
 CReC Saint-Cyr (Centre de recherche de l'académie militaire de Saint-Cyr) 

\noindent 
 56380 Guer, France.

\noindent
{\tt gregoire.menet@ac-amiens.fr}

\bigskip
    
 \noindent
Ulrike \textsc{Rie}\ss

\noindent
Institute for Theoretical Studies - ETH Z\"urich

\noindent 
Clausiusstrasse 47, Building CLV, Z\"urich (Switzerland)

\noindent
{\tt uriess@ethz.ch}

\end{document}